\documentclass[11pt, letterpaper]{amsart} 

\synctex=1

\usepackage{geometry}
\usepackage{color}
\usepackage{verbatim}
\geometry{letterpaper}

\usepackage{amsmath,amssymb,amsthm}
\usepackage[format=hang]{caption}
\usepackage{color}

\newcommand*{\R}{{\mathbb R}}

\newcommand*{\N}{{\mathbb N}}

\newcommand*{\Q}{\mathbb{Q}}

\newcommand*{\eps}{\varepsilon}


\newcommand*{\pip}{\varphi}

\newcommand*{\Mod}{\text{Mod}_1}
\newcommand*{\AM}{\text{AM}}

\providecommand*{\vint}[1]{\mathchoice
          {\mathop{\vrule width 5pt height 3 pt depth -2.5pt
                  \kern -9pt \kern 1pt\intop}\nolimits_{\kern -5pt{#1}}}
          {\mathop{\vrule width 5pt height 3 pt depth -2.6pt
                  \kern -6pt \intop}\nolimits_{\kern -3pt{#1}}}
          {\mathop{\vrule width 5pt height 3 pt depth -2.6pt
                  \kern -6pt \intop}\nolimits_{\kern -3pt{#1}}}
          {\mathop{\vrule width 5pt height 3 pt depth -2.6pt
                  \kern -6pt \intop}\nolimits_{\kern -3pt{#1}}}}

\DeclareMathOperator{\Lip}{Lip}

\DeclareMathOperator{\dist}{dist}
\DeclareMathOperator{\diam}{diam}
\DeclareMathOperator{\rad}{rad}

\numberwithin{equation}{section}
\theoremstyle{plain}
\newtheorem{theorem}[equation]{Theorem}
\newtheorem{prop}[equation]{Proposition}
\newtheorem{corollary}[equation]{Corollary}
\newtheorem{lemma}[equation]{Lemma}

\theoremstyle{definition}

\newtheorem{definition}[equation]{Definition}
\newtheorem{remark}[equation]{Remark}
\newtheorem{example}[equation]{Example}
\begin{document}

\title[Fine properties of BV mappings]{Fine properties of metric space-valued mappings of bounded variation in metric measure spaces} 
\author[Caama\~{n}o, Kline, Shanmugalingam]{Iv\'an Caama\~{n}o, Josh Kline, Nageswari Shanmugalingam}
\maketitle

\begin{abstract} {Here we consider 
two notions of mappings of bounded variation (BV) from the metric measure space into the metric space; 
one based
on relaxations of Newton-Sobolev functions, and the other based on a notion of AM-upper gradients. We show that when
the target metric space is a Banach space, these two notions coincide with comparable energies, but for more general
target metric spaces, the two notions can give different function-classes. We then consider the fine properties of BV mappings
(based on the AM-upper gradient property), and show that when the target space is a proper metric space, 
then for a BV mapping into the target space, 
co-dimension $1$-almost every point in the jump set of a BV mapping into the proper space has at least two, and at most
$k_0$, number of jump values associated with it, and that the preimage of balls around these jump values have lower density
at least $\gamma$ at that point. Here $k_0$ and $\gamma$ depend solely on the 
structural constants associated with the metric measure space,
and jump points are points at which the map is not approximately
continuous.}
\end{abstract}

\noindent
    {\small \emph{Key words and phrases}: {Bounded variation, metric measure space, Poincar\'e inequality, doubling measure,
vector-valued maps, approximate continuity, jump points, jump values}

\vskip .2cm

\noindent {\bf Acknowledgement:}The research of the authors J.K. and N.S. were partially 
supported by grant DMS~\#2054960 from
the National Science Foundation (U.S.A.). J.K. was also partially suppored by the University of Cincinnati's
University Research Council summer grant. 
I.C. was supported by the grant PID2022-138758NB-I00 (Spain).
The authors thank Panu Lahti for valuable feedback on early versions
of the manuscript.

\medskip

\noindent
    {\small Mathematics Subject Classification (2020): {Primary: 26A45, 46E36; Secondary: 30L99, 26B30, 30L05, 54E40}
}

\section{Introduction}\label{Sec:1}

The theory of functions of bounded variation were first developed in order to study regularity properties of 
minimal surfaces, and a nice overview can be obtained from the collection~\cite{DeGi} and from the
discussion in~\cite{EvansGariepy}. Since then the theory has found applications in other areas as well,
including image processing~\cite{ACMM, CCN}, plasma physics~\cite{JK, GY}, and quasiconformal mappings~\cite{GK, K},
and the references contained in these papers provide further valuable information. 
In image processing or in plasma-blistering in media that is
not uniform and might even exhibit non-smoothness, a theory of functions of bounded variation in metric
spaces is useful. Recent research on mappings of finite distortion and quasisymmetric mappings 
indicate a need to understand metric space-valued mappings of bounded variation on metric measure 
spaces, see for instance~\cite{Ambrosio1990-II, Lah1, HKO, PR, CHM, BNP} for examples.
In this paper we seek to study mappings of bounded variation in non-smooth metric measure
spaces of controlled geometry, that is, spaces where the measure is doubling and supports a $1$-Poincar\'e inequality.

In comparison to Sobolev functions, functions of bounded variation exhibit less regularity; classic examples
include the Cantor staircase function on the Euclidean unit interval and characteristic functions of smooth Euclidean sets.
However, a Euclidean set whose characteristic function is of bounded variation can have non-smooth boundary.
As with characteristic functions of such sets, more general functions of bounded variation in Euclidean domains
exhibit discontinuity behavior along certain subsets, called jump sets. The situation gets more complicated when the
function of bounded variation is not real-valued but a map from a Euclidean domain into a metric space, as
in~\cite{Ambrosio1990-II}. Yet another layer of complication comes from considering functions of bounded 
variation from a metric measure space into a metric space. The goal of the present paper is to explore regularity properties
of such maps.

To do so, the first question to address is what is a reasonable notion of mappings of bounded variation from a metric 
measure space into a metric space. First proposed by Miranda Jr.~in~\cite{Miranda}, the notion of real-valued functions
on metric measure spaces equipped with a doubling measure supporting a $1$-Poincar\'e inequality have been extensively
studied, and the papers~\cite{Ambrosio, AMP, DEKS, EGLS, HKMM, HMM-1, HMM-2, HMM-3, 
Lah-Fed, LSh1, LZ} form a small sample of them. The papers~\cite{Miranda, Ambrosio, AMP, EGLS, LSh1} consider
the definition of functions of bounded variation in the metric setting via relaxation of Sobolev functions, while 
the papers~\cite{HKMM, HMM-1, HMM-2, HMM-3} consider functions of bounded variation as those whose local behavior
is controlled by a sequence of non-negative Borel functions that serve as a substitute for upper 
gradients~\cite{Martio}. In~\cite{DEKS} it was shown that these two approaches yield the same 
class of real-valued functions of bounded variation.

In the present paper we consider two definitions of mappings of bounded variation from a metric measure space into a
metric space or, in particular, a Banach space, by adopting the two approaches described above. We show that
when the domain metric measure space is complete, doubling, and supports a $1$-Poincar\'e inequality and the target metric
space is a Banach space, both notions yield the same class of maps. However, when the target metric space is not
a Banach space, the two approaches do not in general yield the same function class, with the notion of relaxation of 
Sobolev functions yielding a \emph{strictly smaller} subclass of maps. Thus, in the setting of general metric space target,
it is more appropriate to study mappings of bounded variation based on the sequence of upper gradients as first proposed
by Martio in~\cite{Martio}. Other alternate notions of 
metric-valued BV mappings defined via relaxation with simple maps and test plan-basd BV 
mappings was studied in \cite{BNP}, where it was shown to be equivalent to definitions given by 
test plans and post-composition with Lipschitz functions.  However, the fine properties of those mappings were not studied there.

Having made the choice of the definition of mappings of bounded variation, in the second part of the paper we 
explore the fine properties of mappings of bounded variation from a complete doubling metric measure space 
supporting a $1$-Poincar\'e inequality, into a proper metric space. We determine Hausdorff co-dimensional measure
properties of sets of jump discontinuity points of such mappings. The results about jump sets
and jump points for metric space-valued maps
are new even in Euclidean setting, thus augmenting the results found in~\cite{Ambrosio1990,
Ambrosio1990-II}.

The following are the two main results of this note. The first result focuses on comparing the two notions of 
mappings of bounded variation. The space $BV(X:V)$ is defined using the Miranda Jr.~\cite{Miranda} approach of relaxation
of Sobolev function class, while the space $BV_{AM}(X:V)$ is obtained by using sequences of non-negative Borel
functions that act as upper gradients as in~\cite{Martio}.

\begin{theorem}\label{thm:main}
Let $(X,d,\mu )$ be a complete doubling metric measure space supporting a $1$-Poincaré inequality, and
let $V$ be a Banach space. Suppose also that for $\mu$-a.e.~$x\in X$ we have that $\liminf_{r\to 0^+}\mu(B(x,r))/r=0$.
Then $BV(X:V)=BV_{AM}(X:V)$, with comparable BV energy seminorms.
\end{theorem}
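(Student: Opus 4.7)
My plan is to prove the two inclusions $BV(X:V)\subseteq BV_{AM}(X:V)$ and $BV_{AM}(X:V)\subseteq BV(X:V)$ separately, keeping track of energies so as to obtain the comparability of the seminorms. The hypothesis that $V$ is Banach enters crucially through the existence of Bochner averages $\vint_B u\,d\mu\in V$, which enables the discrete-convolution construction in the second direction and is precisely why (as the paper notes) the equivalence can fail for general metric targets.

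\textbf{First direction.} Given $u\in BV(X:V)$, I would fix an approximating sequence $(u_i)$ of locally Lipschitz maps with $u_i\to u$ in $L^1_{\loc}(X:V)$ and $\liminf_i\int_X g_{u_i}\,d\mu$ close to $\|u\|_{BV}$, where each $g_{u_i}$ is a Newton-Sobolev upper gradient of $u_i$. The goal is to show that the sequence $(g_{u_i})$ itself serves as an AM-upper gradient sequence for $u$. Since each $g_{u_i}$ is a genuine upper gradient, the inequality
\[
\|u_i(\gamma(b))-u_i(\gamma(a))\|_V\le\int_\gamma g_{u_i}\,ds
\]
holds along every rectifiable curve $\gamma$. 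A Fuglede-type lemma for the AM-modulus, together with the $L^1_{\loc}$ convergence of $u_i$ to $u$, would give that for AM-almost every $\gamma$ a subsequence satisfies $u_i\circ\gamma\to u\circ\gamma$ pointwise a.e.\ on $\gamma$. Passing to the $\liminf$ in the displayed inequality along that subsequence yields the AM-upper gradient inequality for $u$ with the same $(g_{u_i})$, placing $u$ in $BV_{AM}(X:V)$ with AM-energy dominated by the BV energy of $u$.

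\textbf{Second direction.} For $u\in BV_{AM}(X:V)$ with AM-upper gradient sequence $(g_i)$, at each scale $\veps>0$ I would take a bounded-overlap Whitney-type cover $\{B(x_j,\veps)\}$ of $X$ and a Lipschitz partition of unity $\{\varphi_{j,\veps}\}$ subordinate to $\{B(x_j,2\veps)\}$ with $\Lip(\varphi_{j,\veps})\le C/\veps$. Using that $V$ is Banach and $u\in L^1_{\loc}(X:V)$, I would define the discrete convolution
\[
u_\veps(x)\coloneq\sum_j\varphi_{j,\veps}(x)\vint_{B(x_j,\veps)}u\,d\mu,
\]
which is locally Lipschitz into $V$. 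A standard discrete-convolution computation, combined with a $1$-Poincar\'e inequality for $BV_{AM}$ mappings (obtained by post-composing $u$ with $1$-Lipschitz functionals $\varphi$ on $V$ and invoking the scalar $1$-Poincar\'e inequality available in $X$), would furnish an upper gradient of $u_\veps$ bounded pointwise by a non-centered maximal function of $g_i$ at scale $\veps$. Lebesgue differentiation of Bochner integrals gives $u_\veps\to u$ in $L^1_{\loc}(X:V)$, and a diagonal extraction in $(i(\veps),\veps)$ produces a BV approximating sequence for $u$ with energy comparable to that of $(g_i)$.

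\textbf{Main obstacle.} The hardest step will be the upper gradient estimate for $u_\veps$ with constants independent of $\veps$ and with $L^1$ mass comparable to that of $g_i$, since this requires pushing the $1$-Poincar\'e inequality from the scalar to the Banach-valued $BV_{AM}$ setting and controlling maximal functions uniformly at all scales. The density assumption $\liminf_{r\to 0^+}\mu(B(x,r))/r=0$ should enter precisely at this stage, ruling out pathological concentration of the BV-measure of $u$ at individual points that would otherwise prevent the two energy functionals from being comparable.
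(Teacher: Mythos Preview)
Your overall architecture matches the paper's: the easy inclusion $BV\subset BV_{AM}$ by exhibiting the upper gradients $g_{u_i}$ as an AM-bounding sequence, and the hard inclusion $BV_{AM}\subset BV$ by discrete convolution against a Lipschitz partition of unity, driven by a vector-valued Poincar\'e inequality for $BV_{AM}$ maps. However, the mechanism you propose for the latter inequality has a genuine gap.

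Post-composing $u$ with $1$-Lipschitz functionals $\varphi\in V^*$ and invoking the scalar AM--Poincar\'e inequality gives you control of $\vint_B |\varphi(u)-\varphi(u_B)|\,d\mu$ for each fixed $\varphi$, but what you need is $\vint_B\|u-u_B\|\,d\mu=\vint_B\sup_{\|\varphi\|\le 1}|\varphi(u-u_B)|\,d\mu$, and the supremum cannot be pulled outside the integral. No Hahn--Banach or norming-sequence argument rescues this, because the norming functional depends on the point $x$. The paper instead proves the vector-valued AM--Poincar\'e inequality directly via a Semmes pencil of curves: for Lebesgue points $x,y$ of $u$ one bounds $\|u(x)-u(y)\|$ by $\liminf_i\int_\gamma\rho_i\,ds$ for $\sigma_{x,y}$-a.e.\ curve $\gamma$ in the pencil joining $x$ to $y$ (using the AM-upper gradient inequality along points of $\gamma$ close to $x$ and $y$), then integrates over the pencil and over $B\times B$ and estimates the resulting Riesz-type kernel. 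The hypothesis $\liminf_{r\to 0^+}\mu(B(x,r))/r=0$ enters precisely in this Semmes-pencil step, to choose annular shells $B(x,r_i)\setminus B(x,r_i/2)$ along which the pencil measure is small enough; it is not about ``concentration of BV-measure at points'' as you suggest.

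Two smaller points. First, the paper does not bound an upper gradient of $u_\veps$ by a maximal function of $g_i$ followed by a diagonal extraction; it uses the measure form of Poincar\'e, $\vint_B\|u-u_B\|\,d\mu\lesssim r\,\|D_{AM}u\|(\lambda B)/\mu(\lambda B)$, to obtain $\Lip u_\veps(x)\lesssim \|D_{AM}u\|(5\lambda B_j)/\mu(B_j)$ directly, then integrates using bounded overlap to get $\int_X\Lip u_\veps\,d\mu\lesssim\|D_{AM}u\|(X)$. Your maximal-function route is delicate since the Hardy--Littlewood maximal operator fails on $L^1$. Second, for the easy direction the paper simply passes to a subsequence with $u_i\to u$ $\mu$-a.e.\ and observes that curves spending positive length in the exceptional null set form an AM-null family; your Fuglede-type argument is more elaborate than necessary but not wrong.
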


Theorem~\ref{thm:main} will be proved in Section~\ref{Sec:4}. Before doing so, in Section~\ref{Sec:3} we adapt the notion
of Semmes pencil of curves and Poincar\'e inequality to the setting of Banach space-valued BV functions.

In the next main theorem, we determine the fine properties of a metric space-valued $BV_{AM}$-map, when the metric
space target is proper (that is, closed and bounded subsets of $Y$ are compact).

In what follows, we consider maps $u\in BV_{AM}(X:Y)$ with $(Y,d_Y)$ a metric space. A point $x\in X$
is said to be a \emph{point of approximate continuity} of $u$ if there is a point $y_x\in Y$ such that
for each $\eps>0$ we have
\[
\limsup_{r\to 0^+}\frac{\mu(B(x,r)\setminus u^{-1}(B(y_x,\eps)))}{\mu(B(x,r))}=0.
\]
We say that $x$ is a \emph{jump point} of $u$, that is, $x\in \mathcal{J}(u)$, if it is not a point of approximate continuity of $u$. 
If $x$ is a jump point of $u$, we say that a point $y\in Y$ is a \emph{jump value} of $u$ at $x$ if for all $\eps>0$,
we have
\[
\limsup_{r\to 0^+}\frac{\mu(B(x,r)\cap u^{-1}(B(y,\eps)))}{\mu(B(x,r))}>0.
\]

\begin{theorem}\label{thm:main2}
Let $(X,d,\mu)$ be a complete doubling metric measure space supporting a $1$-Poincar\'e inequality, and
let $(Y,d_Y)$ be a proper metric space. Then for each $u\in BV_{AM}(X:Y)$ there is a set $\mathcal{J}(u)\subset X$
such that $\mathcal{J}(u)$ is $\sigma$-finite with respect to the codimension~$1$ Hausdorff measure
$\mathcal{H}^{-1}$ on $X$ and a set $N\subset\mathcal{J}(u)$ with $\mathcal{H}^{-1}(N)=0$ such that
the following hold:
\begin{enumerate}
\item[(a)] Every point in $X\setminus\mathcal{J}(u)$ is a point of approximate continuity of $u$.
\item[(b)] For each $x_0\in\mathcal{J}(u)\setminus N$ there are at least two, and at most $k_0$, number of 
points $y_1,y_2,\cdots, y_k\in Y$ such that for each $\eps>0$ and $i=1, 2,\cdots, k$ we have 
\[
\liminf_{r\to 0^+}\frac{\mu(B(x,r)\cap u^{-1}(B(y_i,\eps)))}{\mu(B(x,r))}\ge \gamma,
\]
and
\[
\limsup_{r\to 0^+}\frac{\mu(B(x,r)\setminus \bigcup_{i=1}^ku^{-1}(B(y_i,\eps)))}{\mu(B(x,r))}=0.
\]
\end{enumerate}
In the above, both $k_0$ and $\gamma$ are constants that depend solely on the doubling and Poincar\'e constants of
the space $X$, and in particular are independent of $Y$, $u$ and $\eps$.
\end{theorem}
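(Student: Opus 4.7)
My plan is to reduce the analysis of a metric-valued $BV_{AM}$ map to real-valued BV analysis via post-composition with distance functions to a countable dense subset of $Y$, then combine the known fine properties of real-valued BV functions on complete PI spaces with the relative isoperimetric inequality applied to sub-level sets of those compositions. Since $Y$ is proper and hence separable, I fix a countable dense set $\{y_j\}_{j\in\N}\subset Y$ and set $\phi_j(y):=d_Y(y,y_j)$, which is $1$-Lipschitz; because $1$-Lipschitz post-composition preserves AM-upper gradient sequences, each $\phi_j\circ u$ belongs to $BV_{AM}(X)=BV(X)$. A further decomposition using the level sets $\{d_Y(u(\cdot),y_1)\le n\}$ lets me assume $u$ takes values in a compact subset of $Y$ on any prescribed region, which in particular provides tightness for the pushforward probability measures
\[
\nu_r^x:=\frac{u_*(\mu\llcorner B(x,r))}{\mu(B(x,r))}.
\]

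For the $\sigma$-finiteness claim and part~(a), the fine theory of real-valued BV functions on complete doubling PI spaces provides, for each $j$, an $\mathcal H^{-1}$-$\sigma$-finite jump set $\mathcal{J}_j:=\mathcal{J}(\phi_j\circ u)$ and an approximate limit $a_j(x)$ at every $x\in X\setminus(\mathcal{J}_j\cup N_0)$, where $N_0$ is $\mathcal H^{-1}$-null. Set $\mathcal{J}(u):=\bigcup_j\mathcal{J}_j$, which is $\mathcal H^{-1}$-$\sigma$-finite as a countable union. At $x_0\in X\setminus(\mathcal{J}(u)\cup N_0)$, any weak-$*$ cluster point $\nu$ of $\{\nu_r^{x_0}\}_{r\to 0^+}$ satisfies $(\phi_j)_*\nu=\delta_{a_j(x_0)}$ for every $j$, so that $\mathrm{supp}(\nu)\subset\bigcap_j\phi_j^{-1}(\{a_j(x_0)\})$. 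Density of $\{y_j\}$ in $Y$ makes this intersection contain at most one point, while non-emptiness of $\mathrm{supp}(\nu)$ then forces exactly one point $y_{x_0}$, necessarily independent of the subsequence. Hence $\nu_r^{x_0}\to\delta_{y_{x_0}}$ weakly-$*$, giving approximate continuity of $u$ at $x_0$ and proving that $\mathcal{J}(u)$ agrees, modulo $N_0$, with the non-approximate-continuity set of $u$.

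For part~(b), I fix a countable dense $D\subset(0,\infty)$ so that for every pair $(j,\eps)\in\N\times D$ the sub-level set $E_{j,\eps}:=u^{-1}(B(y_j,\eps))=\{\phi_j\circ u<\eps\}$ has finite perimeter (possible by the co-area formula for $\phi_j\circ u$). The relative isoperimetric inequality for PI spaces gives the density dichotomy at $\mathcal H^{-1}$-a.e.~point: for each $E_{j,\eps}$ the density at $x$ is $0$, $1$, or both the density of $E_{j,\eps}$ and that of its complement are at least $\gamma$, with $\gamma$ depending only on the doubling and Poincar\'e constants. Enlarging $N_0$ to $N$ to absorb these null sets, fix $x_0\in\mathcal{J}(u)\setminus N$. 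Then $u$ fails to be approximately continuous at $x_0$, so any weak-$*$ cluster point $\nu$ of $\nu_r^{x_0}$ is non-Dirac by the argument of part~(a); pick $y_1\in\mathrm{supp}(\nu)$. The density dichotomy, after ruling out density~$1$ (which would force approximate continuity), gives $\liminf_r\mu(B(x_0,r)\cap u^{-1}(B(y_1,\eps)))/\mu(B(x_0,r))\ge\gamma$ for each $\eps>0$. Iteratively, given pairwise $\eps$-separated $y_1,\dots,y_k$, if the coverage
\[
\limsup_{r\to 0^+}\frac{\mu(B(x_0,r)\setminus\bigcup_{i=1}^ku^{-1}(B(y_i,\eps)))}{\mu(B(x_0,r))}>0,
\]
then tightness and properness of $Y$ yield a further cluster support point $y_{k+1}\in Y\setminus\bigcup_i B(y_i,\eps)$. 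Disjointness of the $\eps$-balls combined with each lower density being $\ge\gamma$ forces $k\gamma\le 1$, so the iteration terminates at some $k\le k_0:=\lfloor 1/\gamma\rfloor$; propagation of the density and coverage conclusions from $\eps\in D$ to all $\eps>0$ follows from monotonicity in $\eps$ and the finiteness of the resulting list $\{y_1,\dots,y_k\}$, and the existence of at least two jump values is immediate from $x_0$ being a jump point.

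The main obstacle I anticipate is managing the single $\mathcal H^{-1}$-null exceptional set $N$ across countably many sub-level sets $E_{j,\eps}$ and countably many distance-function compositions, since the density dichotomy holds off an $\mathcal H^{-1}$-null set for each fixed $(j,\eps)$ but not uniformly; absorbing all these requires careful bookkeeping. A secondary difficulty is promoting the density and coverage conclusions uniformly in $\eps>0$, for which the termination of the iteration in a finite list $\{y_1,\dots,y_k\}$ independent of $\eps$ is essential and crucially uses the properness of $Y$ to secure compactness of the closed subsets $Y\setminus\bigcup_i B(y_i,\eps)$ from which new cluster values are extracted.
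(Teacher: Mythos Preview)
Your approach is correct and takes a genuinely different route from the paper. The paper defines $\mathcal{J}(u)$ intrinsically as the complement of the set of approximate continuity points (so (a) is immediate by construction), obtains $\sigma$-finiteness by showing $\mathcal{J}(u)$ sits inside a countable union of measure-theoretic boundaries $\partial_*E(y,\rho)$ with $y$ in a countable dense set and $\rho$ in a countable set of good radii coming from the co-area formula, and then manufactures each jump value at $x_0\in\mathcal{J}(u)\setminus N$ by an explicit Cauchy-sequence construction: one covers a ball in $Y$ by balls of radius $\eps/6^k$, picks one whose preimage has positive upper density, and iterates, so that the centers converge to a point $y_\infty$ with lower density $\ge\gamma$ at every scale. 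Your argument replaces this covering-and-refinement scheme with a compactness argument: tightness of the pushforward probability measures $\nu_r^{x_0}$ (obtained from the finiteness of $(\phi_1\circ u)^\vee(x_0)$, which you should make explicit rather than gesturing at a ``decomposition'') lets you pass to weak-$*$ cluster points, and any point in the support of a cluster point is automatically a jump value because the density dichotomy for the finite-perimeter sets $E_{j,\eps'}$ upgrades positive $\limsup$ density to $\liminf\ge\gamma$.

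Two remarks on wording. First, your phrase ``ruling out density $1$ (which would force approximate continuity)'' is misleading: density $1$ of $u^{-1}(B(y_1,\eps))$ for a single $\eps$ does not force approximate continuity, but you do not need to rule it out, since density $1$ already gives $\liminf\ge\gamma$; the only case to exclude is density $0$, and that contradicts $y_1\in\operatorname{supp}(\nu)$. Second, the claim that \emph{every} cluster point $\nu$ is non-Dirac is not needed and is not obviously true (some subsequential limits at a jump point could be Dirac); what you actually use is that at least one cluster point has mass outside $\bigcup_i B(y_i,\eps)$ whenever the coverage condition fails, which follows from Portmanteau on the closed complement. The trade-off between the two arguments is that the paper's is entirely constructive and avoids invoking compactness of measures, while yours is shorter and makes the role of properness of $Y$ (as sequential compactness of probability measures) more transparent; both rest on the same two pillars, the co-area formula and Ambrosio's reduced-boundary theorem $\mathcal{H}^{-1}(\partial_*E\setminus\Sigma_\gamma E)=0$, and both handle the countable bookkeeping of the null set $N$ in the same way.
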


Theorem~\ref{thm:main2} will be proved in Section~\ref{sec:JumpSets}. The set $\mathcal{J}(u)$ is called the jump
set of $u$, and is constructed in the initial discussion of Section~\ref{sec:JumpSets} as the complement of the
set of points of approximate continuity of $u$, and so~(a) is immediate from the construction. The
$\sigma$-finiteness of the jump set is proved as Corollary~\ref{cor:SigmaFinite}. Subsequently,~(b) is
proved via Proposition~\ref{prop:finitejumps}, completing the proof.

\section{Background notions}\label{Sec:2}

In this note, $(X,d,\mu)$ will denote a metric measure space, where $(X,d)$ is a 
complete metric space and $\mu$ a Borel measure, and $V$ is a general Banach space. 
Balls centered at $x\in X$  with radius $r>0$ will be denoted $B(x,r)=\{y\in X\, :\, d(x,y)<r\}$.
A ball in $X$ may have more than one center and more than one radius. Hence, by a ball,
we understand that it comes with a pre-selected center and radius. The radius of a ball $B$ will
be denoted by $\rad(B)$. The \emph{closed} ball centered at $x$ with radius $r>0$ is the set
$\overline{B}(x,r):=\{z\in X\, :\, d(x,z)\le r\}$, and is in general potentially larger than the topological
closure of the open ball $B(x,r)$. Moreover, given two sets $E,F\subset X$, the distance between
them is denoted $\dist(E,F):=\inf\{d(x,y)\, :\, x\in E, y\in F\}$.

We will assume throughout that the measure $\mu$ is \emph{doubling}, that is, there is some
constant $C_d\ge 1$ such that whenever $x\in X$ and $r>0$, we have
\[
0<\mu(B(x,2r))\le C_d\, \mu(B(x,r))<\infty.
\]
Given such a measure $\mu$, and a set $A\subset X$, the \emph{co-dimension} $1$ Hausdorff measure
of $A$ is given by
\[
\mathcal{H}^{-1}(A):=\lim_{\delta\to 0^+}\, \inf\bigg\lbrace \sum_{i\in I\subset \N}\frac{\mu(B_i)}{\rad(B_i)}\, 
  :\, A\subset\bigcup_{i\in I}B_i,\, \rad(B_i)\le \delta\bigg\rbrace.
\]

Next, we introduce the definitions of two notions of mappings of bounded variation. The first one, $BV(X:V)$, 
was widely studied in \cite{Miranda}, 
while the other one, $BV_{AM}(X:V)$, was first introduced in \cite{Martio} and was proven to be equal 
to $BV(X:V)$ in \cite{DEKS}, when $V=\R$ and the measure on $X$ is doubling and supports a $1$-Poincar\'e inequality. 
As a natural question, we will study here the equality of both spaces when $V$ is a general Banach space.

\subsection{Vector-valued mappings of bounded variation via relaxation of Newton-Sobolev mappings}

Let $u\in L^1(X:V)$ with $L^1(X:V)$ in the sense of Bochner integrals, and define
\[
\Vert Du\Vert (X):=\inf\left\{ \liminf_{i\to\infty}\int_Xg_{u_i}\, d\mu:(u_i)_{i\in\N}\in N^{1,1}(X:V), u_i\overset{L^1}{\rightarrow}u\right\}.
\]

\begin{definition}\label{def:Miranda}
Let $(X,d,\mu )$ be a metric measure space and $V$ a Banach space. Following Miranda~\cite{Miranda}, we 
define $BV(X:V)$ to be the classes of mappings $u\in L^1(X:V)$ such that $\Vert Du\Vert (X)<\infty$. 
We denote $BV(X):=BV(X:\R )$.
\end{definition}

It was shown in~\cite{Miranda} that the map $U\mapsto \Vert Du\Vert(U)$ for open sets $U\subset X$ can be extended via
a Carath\'eodory construction to a Radon outer measure on $X$, which is also denoted by $\Vert Du\Vert$; in particular, for
Borel sets $A\subset X$ we set
\[
\Vert Du\Vert(A):=\inf\{\Vert Du\Vert(U)\, :\, U\text{ is open in }X \text{ and }A\subset U\}.
\]
If $E\subset X$ is a measurable set, we say that $E$ is of finite perimeter if $\chi_E\in BV(X)$. The perimeter measure
$P(E,\cdot):=\Vert D\chi_E\Vert(\cdot)$.
For functions in the class $BV(X)$ the following co-area formula is known.

\begin{lemma}\label{lem:coarea}(coarea formula, \emph{\cite[Proposition~4.2]{Miranda}})
Let $E\subset X$ Borel and $u\in BV (X)$. Then
\[
\Vert Du\Vert (E)=\int_{-\infty}^\infty P(\{ u>t\},E)dt.
\]
\end{lemma}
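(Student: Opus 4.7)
The plan is to establish the two inequalities separately: the lower bound
\[
\int_{-\infty}^\infty P(\{u>t\},E)\,dt \le \|Du\|(E)
\]
and the upper bound
\[
\|Du\|(E) \le \int_{-\infty}^\infty P(\{u>t\},E)\,dt.
\]
First I would reduce to the case where $E=U$ is open, since the general Borel case follows from the Carath\'eodory construction of $\|Du\|$ together with inner regularity (via Fatou) of $t\mapsto P(\{u>t\},U)$.

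For the lower bound, I would take an approximating sequence $(u_i)\subset N^{1,1}(X)$ with $u_i\to u$ in $L^1$ and $\int_U g_{u_i}\,d\mu\to \|Du\|(U)$. For each Newton-Sobolev function, the Cavalieri-type coarea inequality for upper gradients,
\[
\int_{-\infty}^\infty P(\{u_i>t\},U)\,dt \le \int_U g_{u_i}\,d\mu,
\]
is available from the standard Sobolev coarea in metric spaces (obtained via the chain rule $g_{\chi_{\{u_i>t\}}}\le g_{u_i}$ on $\{u_i>t\}$ applied to mollifiers, plus Fubini). After a diagonal argument, we may arrange $u_i\to u$ in $L^1$ such that for a.e.~$t$ also $\chi_{\{u_i>t\}}\to \chi_{\{u>t\}}$ in $L^1$; then lower semicontinuity of the perimeter gives $P(\{u>t\},U)\le\liminf_i P(\{u_i>t\},U)$, and Fatou's lemma combined with the displayed inequality closes the argument.

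For the upper bound, the idea is the layer-cake decomposition. After splitting $u=u^+-u^-$, I may assume $u\ge 0$ so that $u(x)=\int_0^\infty\chi_{\{u>t\}}(x)\,dt$. I would discretize by setting $u_n:=2^{-n}\sum_{k=1}^\infty \chi_{\{u>k\,2^{-n}\}}$, which converges to $u$ in $L^1$. By subadditivity and homogeneity of the BV seminorm, and by the definition of perimeter,
\[
\|Du_n\|(U)\le 2^{-n}\sum_{k=1}^\infty P(\{u>k\,2^{-n}\},U),
\]
while the right-hand side is a lower Riemann sum for $\int_0^\infty P(\{u>t\},U)\,dt$. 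Using the lower semicontinuity of $\|D\cdot\|$ under $L^1$-convergence (which is built into Definition~\ref{def:Miranda}) we obtain $\|Du\|(U)\le \liminf_n \|Du_n\|(U)$, which together with the monotone convergence of the Riemann sums produces the desired bound.

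The hard part will be justifying both the measurability of $t\mapsto P(\{u>t\},E)$ in $t$ for general Borel $E$, and the application of subadditivity to the infinite sum defining $u_n$: one needs lower semicontinuity of $\|D\cdot\|$ not only along $L^1$-limits of the full sum but also along the partial sums, which requires a truncation argument together with monotone convergence. A minor subtlety is the passage from open to Borel $E$ in the coarea identity, which one handles by a Radon-measure regularity argument once the equality is established on open sets.
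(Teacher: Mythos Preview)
The paper does not supply its own proof of this lemma: it is stated as a citation of \cite[Proposition~4.2]{Miranda} and used as a black box. So there is nothing in the present paper to compare your argument against.

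That said, your sketch follows the standard strategy one finds in Miranda's paper. A couple of points to tighten: the inequality you write as ``$g_{\chi_{\{u_i>t\}}}\le g_{u_i}$'' is not literally a chain rule (the characteristic function is not in $N^{1,1}$); what one actually does is approximate $\chi_{\{u_i>t\}}$ by truncations of $u_i$ and use that $g_{u_i}\chi_{\{t<u_i<s\}}$ is an upper gradient of $\min\{(u_i-t)_+,s-t\}/(s-t)$, then apply Fubini in $t$. For the upper bound, calling the right-hand side a ``lower Riemann sum'' is imprecise since $t\mapsto P(\{u>t\},U)$ need not be monotone; one should instead invoke Fatou's lemma (or dominated convergence once the lower bound is in hand) to pass to the limit in the discretization. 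You correctly flag the delicate step of extending subadditivity to the infinite sum $u_n=2^{-n}\sum_k\chi_{\{u>k2^{-n}\}}$; this is handled by applying lower semicontinuity to the partial sums, which converge monotonically to $u_n$ in $L^1$.
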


Thanks to the work of Ambrosio~\cite{Ambrosio}, we know the structure of sets of finite perimeter. To describe these results
we first describe the measure-theoretic and reduced boundaries of subsets of $X$. For $E\subset X$ we say that a point
$x\in X$ belongs to the measure-theoretic boundary $\partial_*E$ of $E$ if 
\begin{equation}\label{eq:MeasureTheoreticBoundary}
\limsup_{r\to 0^+}\frac{\mu(B(x,r)\cap E)}{\mu(B(x,r))}>0\ \text{ and }\ \limsup_{r\to 0^+}\frac{\mu(B(x,r)\setminus E)}{\mu(B(x,r))}>0.
\end{equation}
For a real number $\beta>0$ we say that $x\in X$ belongs to the reduced boundary $\Sigma_\beta E$ of $E$ if
\begin{equation}\label{eq:ReducedBoundary}
\liminf_{r\to 0^+}\frac{\mu(B(x,r)\cap E)}{\mu(B(x,r))}\ge\beta\ \text{ and }\ 
\liminf_{r\to 0^+}\frac{\mu(B(x,r)\setminus E)}{\mu(B(x,r))}\ge\beta.
\end{equation}

\begin{lemma}\label{lem:reducedBdy}
Suppose that $X$ is complete and that 
$\mu$ is doubling and supports a $1$-Poincar\'e inequality. Then there is a positive real number $\gamma\le 1/2$, 
depending only on the doubling constant and constants associated with the Poincar\'e inequality,
such that for each set $E$ of finite perimeter, 
\[
\mathcal{H}^{-1}(\partial_*E\setminus\Sigma_\gamma E)=0\ \text{ and }\
P(E,X)\approx \mathcal{H}^{-1}(\Sigma_\gamma E).
\]
\end{lemma}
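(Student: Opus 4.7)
\medskip

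\noindent\textbf{Plan.} This lemma is essentially a restatement of Ambrosio's structural theorem for sets of finite perimeter in complete doubling $1$-Poincar\'e metric measure spaces~\cite{Ambrosio}, and the sketch below recalls his approach. The proof rests on the \emph{relative isoperimetric inequality} (RII), derivable from the $1$-Poincar\'e inequality via the coarea formula (Lemma~\ref{lem:coarea}): there exist $C_I\ge 1$ and $\lambda\ge 1$, depending only on the doubling and Poincar\'e data, such that for every set $F$ of finite perimeter and every ball $B(x,r)\subset X$,
\[
\min\bigl\{\mu(B(x,r)\cap F),\,\mu(B(x,r)\setminus F)\bigr\}\le C_I\,r\,P(F,B(x,\lambda r)).
\]
A $5B$-covering argument together with this inequality yields Ambrosio's two-sided comparability
\[
c_1\,\mathcal{H}^{-1}(\partial_* E\cap A)\le P(E,A)\le c_2\,\mathcal{H}^{-1}(\partial_* E\cap A)
\]
for every Borel $A\subset X$, with $c_1,c_2>0$ depending only on the structural constants. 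Once the first assertion of the lemma is in hand, the second is immediate: $\Sigma_\gamma E\subset\partial_* E$ and the symmetric difference is $\mathcal{H}^{-1}$-null, so $\mathcal{H}^{-1}(\Sigma_\gamma E)=\mathcal{H}^{-1}(\partial_* E)\approx P(E,X)$.

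\medskip

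\noindent\textbf{Main argument for the first assertion.} Choose $\gamma\in(0,1/2]$ to be specified. By symmetry in $E\leftrightarrow X\setminus E$, it suffices to prove $\mathcal{H}^{-1}(T_\gamma)=0$, where
\(
T_\gamma:=\{x\in\partial_* E:\,\liminf_{r\to 0^+}\mu(B(x,r)\cap E)/\mu(B(x,r))<\gamma\}.
\)
For $x\in T_\gamma$, the density ratio $g_x(r):=\mu(B(x,r)\cap E)/\mu(B(x,r))$ dips below $\gamma$ at arbitrarily small scales, and, since $x\in\partial_* E$, satisfies $\limsup_{r\to 0^+}g_x(r)>0$. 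Doubling forces $g_x(r)\le C_d\,g_x(2r)$, so $g_x$ cannot change by more than a factor $C_d$ between adjacent dyadic scales, and consequently there exist arbitrarily small radii $r_x$ with $g_x(r_x)\in[\gamma/C_d,\gamma]$. At such a ball, RII (with $\gamma\le 1/2$ placing the minimum on the $E$-side) yields
\[
\frac{\mu(B(x,r_x))}{r_x}\le\frac{C_dC_I}{\gamma}\,P(E,B(x,\lambda r_x)).
\]
A $5B$-covering of $T_\gamma$ together with the $C_d$-bounded overlap of the $\lambda$-dilates converts this pointwise bound into a global estimate of the form $\mathcal{H}^{-1}(T_\gamma)\le (C_*/\gamma)\,P(E,T_\gamma)$, with $C_*$ depending only on $C_d$, $C_I$ and $\lambda$. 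Inserting Ambrosio's upper bound $P(E,T_\gamma)\le c_2\,\mathcal{H}^{-1}(T_\gamma)$ and iteratively refining on the nested sub-thresholds $T_{\gamma/C_d^k}$ — where at each step the selected radii place the density in an ever narrower stratum — produces a geometric decay $\mathcal{H}^{-1}(T_{\gamma/C_d^{k+1}})\le\theta\,\mathcal{H}^{-1}(T_{\gamma/C_d^k})$ with $\theta<1$, forcing $\mathcal{H}^{-1}(T_\gamma)=0$ for $\gamma$ chosen after finitely many iterations in terms of the structural constants alone.

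\medskip

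\noindent\textbf{Expected obstacle.} The principal difficulty is closing the iteration: a single application of the Vitali estimate gives only $\mathcal{H}^{-1}(T_\gamma)\le (C_*c_2/\gamma)\,\mathcal{H}^{-1}(T_\gamma)$, which is tautological unless $C_*c_2<\gamma$. Closure therefore requires exploiting the specific structure of the nested strata $T_{\gamma/C_d^k}$ and tracking how the perimeter mass inside each refinement is inherited from a still thinner density shell at the next scale; the careful book-keeping of these density-level annuli is the technical heart of the argument in~\cite{Ambrosio} and must be invoked rather than reproduced here. A secondary subtlety is that the passage from the Vitali estimate for $\mathcal{H}^{-1}_\delta$ to $\mathcal{H}^{-1}$ itself uses the Radon property of $P(E,\cdot)$ to exchange limits with the shrinking open neighborhoods of $T_\gamma$; this is routine given the Carath\'eodory extension discussed after Definition~\ref{def:Miranda}.
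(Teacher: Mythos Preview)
The paper does not prove this lemma at all; it is stated as a known structural theorem and attributed to Ambrosio~\cite{Ambrosio} (the relevant results there are Theorems~5.3 and~5.4). Your proposal therefore goes considerably further than the paper does, by attempting to sketch the mechanism.

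That said, the iteration you describe in the ``Main argument'' paragraph does not close as written. The claimed geometric decay $\mathcal{H}^{-1}(T_{\gamma/C_d^{k+1}})\le\theta\,\mathcal{H}^{-1}(T_{\gamma/C_d^k})$, even if established, would only control the intersection $\bigcap_k T_{\gamma/C_d^k}=\{x\in\partial_*E:\liminf_{r\to 0}g_x(r)=0\}$, which is a strictly smaller set than any $T_\gamma$ with $\gamma>0$; the phrase ``$\gamma$ chosen after finitely many iterations'' does not match the iteration variable $k$ you introduced, and no mechanism is given by which the decay of the nested strata forces $\mathcal{H}^{-1}(T_\gamma)=0$ for a \emph{fixed} positive $\gamma$. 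You are candid about this in the ``Expected obstacle'' paragraph and ultimately invoke~\cite{Ambrosio} for the closure, so in substance your proposal and the paper agree: both rest on citing Ambrosio. If you want the sketch to stand on its own, the shape of Ambrosio's actual argument is different from a stratum-by-stratum decay: one first establishes, by a differentiation/covering argument for the Radon measure $P(E,\cdot)$, an upper density bound $\limsup_{r\to 0} rP(E,B(x,r))/\mu(B(x,r))<\infty$ at $\mathcal{H}^{-1}$-a.e.\ point of $\partial_*E$, and then runs an iteration of the relative isoperimetric inequality \emph{across dyadic scales at a single point}, showing that if the density ratio ever falls below a structural threshold it is driven to zero, contradicting membership in $\partial_*E$.
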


We also point out that in fact, the property of a measurable set being of finite perimeter is characterized by
the property that $\mathcal{H}^{-1}(\Sigma_\gamma E)$ being finite; this result was first proved by Lahti~\cite{Lah-Fed},
and is new even in the Euclidean setting, refining Federer's characterization of Euclidean sets of finite perimeter.

\subsection{$1$-modulus and AM-modulus.}
Recall the definition of $1$-modulus of a family of non-constant, compact and rectifiable curves $\Gamma$:
\[
\Mod(\Gamma ):=\inf_\rho \int_X \rho\, d\mu 
\]
where the infimum is taken over all non negative Borel functions $\rho :X\to [0,+\infty ]$ s.t. $\int_\gamma \rho ds\geq 1$ for 
each $\gamma\in\Gamma$. It turns out that there is another notion of modulus that is better suited to the study of
BV functions. This notion, called $AM$-modulus, was first proposed by Martio in~\cite{Martio}.
Following~\cite{Martio} we define the $AM$-modulus to be
\[
\mathrm{AM}(\Gamma ):=\inf_{(\rho _i)_{i\in\N}}\liminf_{i\to\infty}\int_X\rho_id\mu,
\]
where the infimum is taken over all sequences of $\mathrm{AM}$-admisible functions, that is, sequences $(\rho _i)_i$ of non negative Borel functions such that for each $\gamma\in \Gamma$ we have
\[
\liminf_{i\to\infty }\int_\gamma \rho_i ds\geq 1.
\]
We say that a property holds for $1$-almost every curve (respectively AM-almost every curve) on $X$ if it holds outside a family of curves of zero $1$-modulus (resp.\ AM-modulus).\\
For a curve family $\Gamma $ we always have $\mathrm{AM}(\Gamma )\leq \mathrm{Mod}_1(\Gamma )$.

\begin{lemma}\label{lem:KosMac}
Let $\Gamma$ be a family of curves in $X$. Then 
\begin{enumerate}
\item[(a)] $\Mod(\Gamma)=0$ if and only if there is a non-negative Borel function $\rho\in L^1(X)$ such that for each
$\gamma\in\Gamma$ we have $\int_\gamma\rho\, ds=\infty$.
\item[(b)] $\AM(\Gamma)=0$ if and only if there is a sequence $(\rho_i)_{i\in\N}$ of non-negative Borel functions with 
$\sup_i\int_X\rho_i\, d\mu<\infty$ such that for each $\gamma\in\Gamma$ we have
\[
\liminf_{i\to\infty}\int_\gamma\rho_i\, ds=\infty.
\]
\end{enumerate}
\end{lemma}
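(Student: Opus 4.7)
The plan is to establish both parts by the same Fuglede-style scheme: the trivial direction in each follows by scaling a single function or sequence, while the substantive direction is obtained by summing a geometrically decaying family of near-minimizers.

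For (a), the $(\Leftarrow)$ direction is immediate: if $\rho\in L^1(X)$ satisfies $\int_\gamma\rho\,ds=\infty$ on $\Gamma$, then $\rho/n$ is admissible for every $n\in\N$, so $\Mod(\Gamma)\le n^{-1}\Vert\rho\Vert_{L^1}\to 0$. For the forward direction, I would choose for each $n$ an admissible $\rho_n$ with $\int_X\rho_n\,d\mu<2^{-n}$ and set $\rho:=\sum_n\rho_n$; monotone convergence yields $\rho\in L^1(X)$, while admissibility of each $\rho_n$ forces $\int_\gamma\rho\,ds\ge\sum_n 1=\infty$ for every $\gamma\in\Gamma$.

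The $(\Leftarrow)$ half of (b) is analogous: if $\sup_i\int_X\rho_i\,d\mu\le M$ and $\liminf_i\int_\gamma\rho_i\,ds=\infty$ on $\Gamma$, then for each fixed $n$ the sequence $(\rho_i/n)_{i}$ is AM-admissible, giving $\AM(\Gamma)\le M/n\to 0$. For the forward direction, assuming $\AM(\Gamma)=0$, I would fix for each $n\in\N$ an AM-admissible sequence $(\rho_i^{(n)})_{i\in\N}$ with $\liminf_i\int_X\rho_i^{(n)}\,d\mu<2^{-n}$, and then pass to a subsequence $(\widetilde\rho_i^{(n)})_{i\in\N}$ with $\int_X\widetilde\rho_i^{(n)}\,d\mu<2^{-n}$ for every $i$; this preserves AM-admissibility since taking a subsequence can only raise the value of $\liminf_i\int_\gamma\rho_i^{(n)}\,ds$. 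I would then set $\sigma_i:=\sum_{n=1}^\infty\widetilde\rho_i^{(n)}$, which is Borel and satisfies $\int_X\sigma_i\,d\mu\le\sum_{n=1}^\infty 2^{-n}=1$ uniformly in $i$.

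It remains to check $\liminf_i\int_\gamma\sigma_i\,ds=\infty$ for every $\gamma\in\Gamma$. For any $N\in\N$ one has $\int_\gamma\sigma_i\,ds\ge\sum_{n=1}^N\int_\gamma\widetilde\rho_i^{(n)}\,ds$, so by superadditivity of $\liminf$,
\[
\liminf_{i\to\infty}\int_\gamma\sigma_i\,ds\ge\sum_{n=1}^N\liminf_{i\to\infty}\int_\gamma\widetilde\rho_i^{(n)}\,ds\ge N,
\]
and letting $N\to\infty$ finishes the argument. The main point requiring care is exactly this diagonal step: one must extract subsequences across both indices without losing AM-admissibility, and ensure that the countable sum $\sigma_i$ remains Borel measurable. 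Both issues are mild, but they are the only places where the AM case differs in substance from the classical Fuglede argument for the $1$-modulus.
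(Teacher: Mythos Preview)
Your argument is correct and follows essentially the same route as the paper's proof: the paper cites \cite[Lemma~5.2.8]{HKSTbook} for~(a) (your proof of~(a) is the standard Fuglede argument found there), and for~(b) both you and the paper sum a geometrically decaying family of near-optimal AM-admissible sequences and use superadditivity of $\liminf$. The only cosmetic difference is that you make the subsequence extraction (to upgrade $\liminf_i\int_X\rho_i^{(n)}\,d\mu<2^{-n}$ to a uniform bound) explicit, whereas the paper absorbs this into the choice of the $(\rho_{k,i})_i$.
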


\begin{proof}
A proof of~(a) can be found in~\cite[Lemma~5.2.8]{HKSTbook}. To prove~(b) we argue as follows. Suppose first that
$\AM(\Gamma)=0$. Then for each positive integer $k$ we can find a sequence $(\rho_{k,i})_{i\in\N}$ of non-negative
Borel functions on $X$ with $\liminf_{i\to\infty}\int_\gamma\rho_{k,i}\, ds\ge 1$ for each $\gamma\in\Gamma$, such that
$\sup_i\int_X\rho_{k,i}\, d\mu<2^{-k}$. For each positive integer $i$ we set $\rho_i=\sum_{k=1}^\infty\rho_{k,i}$.
By the monotone convergence theorem we know that for each $\gamma\in \Gamma$,
\[
\int_\gamma\rho_i\, ds=\sum_{k=1}^\infty\int_\gamma\rho_{k,i}\, ds,
\]
and so
\[
\liminf_k\int_\gamma\rho_i\, ds\ge \sum_{k=1}^\infty\liminf_k\int_\gamma\rho_{k,i}\, ds=\infty,
\]
and at the same time, for each positive integer $i$ we have
\[
\int_X\rho_i\, d\mu=\sum_{k=1}^\infty\int_X\rho_{k,i}\, d\mu\le \sum_{k=1}^\infty 2^{-k}=1.
\]
The desired conclusion follows.

Now suppose that $\Gamma$ is such that there is a sequence $(\rho_i)_{i\in\N}$ of non-negative Borel
functions on $X$ such that $\sup_i\int_X\rho_i\, d\mu=:\alpha<\infty$ and for each $\gamma\in\Gamma$
we have $\liminf_{i\to\infty}\int_\gamma\rho_i\, ds=\infty$. Then for each $\eps>0$ the sequence
$(\eps \rho_i)_{i\in\N}$ is a sequence of AM-admissible functions for $\Gamma$, with
$\limsup_i\int_X \eps\rho_i\, d\mu=\eps\alpha$. Thus 
$\AM(\Gamma)\le \eps\alpha$ for each $\eps>0$. Thus we have that $\AM(\Gamma)=0$.
\end{proof}

From the above lemma, it follows that if $\Gamma$ is a family of curves with 
$\mathrm{AM}(\Gamma)=0$, then for each $\varepsilon>0$ there is a
sequence $(\rho_i)_i$ such that $\sup_i\int_X\rho_i\, d\mu<\varepsilon$ and for each 
$\gamma\in\Gamma$ we have $\liminf_{i\to\infty}\int_\gamma\rho_i\, ds=\infty$.

\subsection{The notion of $BV_{AM}(X:V)$}

Now we turn our attention to the definition of $BV_{AM}(X:V)$. The notion of $BV_{AM}(X:\R)$ was first proposed by
Honzlov\'a-Exnerov\'a, Mal\'y, and Martio in a series of papers~\cite{HMM-1, HMM-2, HMM-3} using the notion of AM-modulus,
see also~\cite{HKMM}. This notion was adopted by Lahti in~\cite{Lah1} to study metric space-valued BV mappings.
In this section we focus on this notion of BV maps.

\begin{definition}
Let $(X,d,\mu )$ be a metric measure space and $V$ a Banach space. 
Let $(\rho_i)_{i\in\N}$ be a sequence of non-negative Borel functions on $X$. We say that this sequence is an
\emph{AM-bounding sequence} for a function $u:X\to V$ if 
for AM-a.e.~curve $\gamma:[a,b]\to X$ 
there is a null set $N_\gamma\subset[a,b]$ (that is, $\mathcal{H}^1(N_\gamma)=0$) such that for every 
$s,t\in [a,b]\setminus N_\gamma$ with $s<t$, we have 
\begin{equation}\label{eq:AMBV}
\Vert u(\gamma (s))-u(\gamma (t))\Vert\leq\liminf_{i\to\infty}\int_{\gamma|_{[s,t]}}\rho_i ds.
\end{equation}
We say that a mapping $u\in L^1(X:V)$ is in the class $BV_{AM}(X:V)$ if there is an AM-bounding sequence $(\rho_i)_{i\in\N}$ for
$u$ such that
\[
\liminf_{i\to\infty}\int_X\rho_i\, d\mu<\infty.
\] 
We set
\[
\Vert D_{AM}u\Vert (X):=\inf_{(\rho_i)_i}\liminf_{i\to\infty}\int_X\rho_i\,d\mu ,
\]
where the infimum is taken over all AM-bounding sequences $(\rho_i)_{i\in\N}$ of $u$.
\end{definition}

As in the case of the object $\Vert Du\Vert$, the above $\Vert D_{AM}u\Vert$ can be extended to be a Radon measure on $X$
via a Carath\'eodory construction, see for example~\cite{Martio} and Section~\ref{section:OuterMeasure}.

Note that in considering an AM-bounding sequence for $u$, we discount a family $\Gamma$ of curves in $X$ such that
$\AM(\Gamma)=0$. If the AM-bounding sequence $(\rho_i)_{i\in\N}$ is such that the exceptional family $\Gamma$ is 
empty, then we say that $(\rho_i)_{i\in\N}$ is a \emph{strong bounding sequence} for $u$.

\begin{lemma}\label{lem:null-mod}
Let $u\in BV_{AM}(X;V)$ and $v:X\to V$. 
Suppose that there is a set $N\subset X$ with $\mu(N)=0$ such that for
each $x\in X\setminus N$ we have $u(x)=v(x)$. Then a sequence $(\rho_i)_{i\in\N}$ is an AM-bounding sequence for
$u$ if and only if it is an AM-bounding sequence for $v$; hence $v\in BV_{AM}(X;V)$ with
$\Vert D_{AM}u\Vert(X)=\Vert D_{AM}v\Vert(X)$ and $\Vert D_{AM}(u-v)\Vert(X)=0$.
\end{lemma}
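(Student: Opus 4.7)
The plan is to reduce the lemma to a single structural observation: the family of curves that spend positive length in a $\mu$-null set has AM-modulus zero, and can therefore be absorbed into the exceptional family already permitted by the definition of $BV_{AM}$. Fix a Borel set $N'\supset N$ with $\mu(N')=0$, which exists by outer regularity of $\mu$. Set
\[
\Gamma_N := \{\gamma\, :\, \mathcal{H}^1(\gamma^{-1}(N'))>0\},
\]
and test it with $\rho_i := i\,\chi_{N'}$: each $\rho_i$ is Borel with $\int_X\rho_i\,d\mu=0$, yet $\liminf_{i\to\infty}\int_\gamma\rho_i\,ds=\infty$ for every $\gamma\in\Gamma_N$, so Lemma~\ref{lem:KosMac}(b) yields $\AM(\Gamma_N)=0$. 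The same lemma gives finite subadditivity of AM-null families (sum two witness sequences), which will let me enlarge any given exceptional family by $\Gamma_N$ at no cost.

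I would then show that the collection of AM-bounding sequences for $u$ coincides with that for $v$. Let $(\rho_i)_{i\in\N}$ be an AM-bounding sequence for $u$ with exceptional family $\Gamma_0$, $\AM(\Gamma_0)=0$. Replace $\Gamma_0$ by $\Gamma_0\cup\Gamma_N$, still AM-null. For a curve $\gamma:[a,b]\to X$ outside this enlarged family, define $N_\gamma^v := N_\gamma \cup \gamma^{-1}(N')$, which is $\mathcal{H}^1$-null in $[a,b]$. For $s,t\in[a,b]\setminus N_\gamma^v$ with $s<t$, both $\gamma(s)$ and $\gamma(t)$ lie outside $N$, so $u(\gamma(s))=v(\gamma(s))$ and $u(\gamma(t))=v(\gamma(t))$, and the bound~\eqref{eq:AMBV} for $u$ transfers verbatim to $v$. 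Thus $(\rho_i)_i$ is an AM-bounding sequence for $v$; by symmetry the reverse inclusion holds, and taking infima gives $v\in BV_{AM}(X;V)$ with $\Vert D_{AM}u\Vert(X)=\Vert D_{AM}v\Vert(X)$.

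For the identity $\Vert D_{AM}(u-v)\Vert(X)=0$, I would apply the same reasoning to the map $w:=u-v\in L^1(X;V)$, which vanishes off $N$. Take $\rho_i\equiv 0$ and exceptional family $\Gamma_N$; for every $\gamma\notin\Gamma_N$ and every $s,t\in[a,b]\setminus \gamma^{-1}(N')$ one has $w(\gamma(s))=w(\gamma(t))=0$, so~\eqref{eq:AMBV} holds trivially. The zero sequence is therefore an AM-bounding sequence for $w$ with energy $0$, forcing $\Vert D_{AM}w\Vert(X)=0$. The only genuine subtlety is the measurability bookkeeping (passing from the abstract $\mu$-null $N$ to a Borel representative $N'$, and invoking subadditivity of AM-null families), both of which are routine consequences of Lemma~\ref{lem:KosMac}(b); no real obstacle arises beyond this.
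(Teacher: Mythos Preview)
Your proposal is correct and follows essentially the same approach as the paper: enlarge $N$ to a Borel null set, note that the family of curves spending positive length in it has AM-modulus zero, absorb this family into the exceptional set, and enlarge each $N_\gamma$ by $\gamma^{-1}(N')$ so that the bounding inequality transfers verbatim between $u$ and $v$; the zero sequence then witnesses $\Vert D_{AM}(u-v)\Vert(X)=0$. The only cosmetic difference is that you spell out the test sequence $\rho_i=i\,\chi_{N'}$ and the subadditivity of AM-null families explicitly, whereas the paper leaves these implicit.
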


\begin{proof}
Since $N$ is a null-set, by enlarging it if need be, we can also assume that it is a Borel set as well. It follows that with
$\Gamma_N^+$ the collection of all non-constant compact rectifiable curves $\gamma:[a,b]\to X$ for which 
$\mathcal{H}^{1}(\gamma^{-1}(N))>0$, we have $\AM(\Gamma_N^+)=0$. Thus, for each AM-bounding sequence
$(\rho_i)_{i\in\N}$ of the original function $u$ and for each $\gamma\not\in\Gamma_0\cup\Gamma_N^+$,
we can replace $N_\gamma$ with $N_\gamma\cup\gamma^{-1}(N)$ to see that this is an AM-bounding sequence for
$v$ as well.

Since $u-v=0$ $\mu$-a.e.~in $X$, the final claims follows from noting that $\AM(\Gamma_N^+)=0$ and so
the sequence $(g_i)_{i\in\N}$, with each $g_i$ the zero function, is an AM-bounding sequence for $u-v$.
\end{proof}

\begin{lemma}\label{rem:AllCurves}
Suppose that $(\rho_i)_{i\in\N}$ is an AM-bounding sequence for a function $u$ on $X$ such that
$\sup_i\int_X\rho_i\, d\mu=:\tau<\infty$. Then for each $\eps>0$ we can find a strong bounding sequence $(g_i)_{i\in\N}$
of $u$ such that for each $i\in\N$ we have $\int_X|g_i-\rho_i|\, d\mu<\eps$.
\end{lemma}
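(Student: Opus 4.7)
The plan is to set $g_i := \rho_i + \sigma_i$, where $(\sigma_i)$ is a small sequence of nonnegative Borel functions produced by Lemma~\ref{lem:KosMac}(b) and designed to blow up in line integral along the exceptional family of $(\rho_i)$. The $L^1$-closeness $\int_X |g_i - \rho_i|\, d\mu = \int_X \sigma_i\, d\mu < \eps$ is then immediate.

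First I would identify the exceptional family $\Gamma_0$: the collection of non-constant compact rectifiable curves $\gamma:[a,b]\to X$ for which no Lebesgue null set $N_\gamma\subset [a,b]$ makes \eqref{eq:AMBV} (with the given $(\rho_i)$) hold for all $s<t$ in $[a,b]\setminus N_\gamma$. The AM-bounding hypothesis gives $\AM(\Gamma_0)=0$, so the remark immediately following Lemma~\ref{lem:KosMac} produces a sequence $(\sigma_i)$ with $\sup_i \int_X \sigma_i\, d\mu<\eps$ and $\liminf_i \int_\gamma \sigma_i\, ds=\infty$ for every $\gamma\in\Gamma_0$. Setting $g_i:=\rho_i+\sigma_i$ then yields the required $L^1$ approximation, and it remains to verify the strong bounding property.

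Verifying that $(g_i)$ is a strong bounding sequence splits by curve type. For $\gamma\notin\Gamma_0$, the null set $N_\gamma$ from the AM-bounding condition for $(\rho_i)$ works for $(g_i)$ too, since $g_i\ge \rho_i$ pointwise. For $\gamma\in\Gamma_0$ and a non-constant subcurve $\gamma|_{[s,t]}$, the analysis further splits: if $\gamma|_{[s,t]}\in\Gamma_0$, then $\liminf_i \int_{\gamma|_{[s,t]}} g_i\, ds=\infty$ and \eqref{eq:AMBV} is immediate; if $\gamma|_{[s,t]}\notin\Gamma_0$, the AM-bounding of $(\rho_i)$ applied to the subcurve delivers the inequality at those endpoints $s,t$ that lie outside a null subset of $[s,t]$.

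The main obstacle is producing one Lebesgue null set $N_\gamma\subset[a,b]$ that accommodates both sub-cases simultaneously for all $s<t$ in $[a,b]\setminus N_\gamma$. I expect to resolve this by applying Lemma~\ref{lem:KosMac}(b) not directly to $\Gamma_0$ but to the larger family $\Gamma_0^{\mathrm{sub}}$ of all non-constant compact subcurves of curves in $\Gamma_0$. Establishing $\AM(\Gamma_0^{\mathrm{sub}})=0$ is the crux: once this is in hand, $\liminf_i \int_{\gamma|_{[s,t]}} \sigma_i\, ds = \infty$ for every non-constant subcurve of every bad curve, so the strong bounding inequality is trivial for all such $(s,t)$ and the choice $N_\gamma=\emptyset$ suffices. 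A complementary route is a Fubini-type argument on the parameter pair $(s,t)$, combined with the Lindel\"of property of $[a,b]$, which absorbs the countably many endpoint null sets contributed by good subcurves into a single null $N_\gamma$; either approach completes the verification of strong boundedness.
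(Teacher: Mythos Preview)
Your overall architecture is right and matches the paper: set $g_i=\rho_i+\sigma_i$ with $(\sigma_i)$ coming from Lemma~\ref{lem:KosMac}(b) applied to an AM-null family, so that along bad curves every subcurve integral of $g_i$ blows up. The difficulty you isolate---manufacturing a single null set $N_\gamma$ for a curve $\gamma\in\Gamma_0$---is exactly the crux.

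However, neither of your two resolutions is complete. Route~(A) asks for $\AM(\Gamma_0^{\mathrm{sub}})=0$, and this is in general false: an upward-closed family can have AM-modulus zero while its subcurve family contains essentially all curves (think of curves passing through a fixed point in $[0,1]^2$; concentrating mass near that point shows the family is AM-null, yet any curve whatsoever is a subcurve of a member). Your $\Gamma_0$ does satisfy ``good $\Rightarrow$ subcurves good'', but that is not enough to force $\AM(\Gamma_0^{\mathrm{sub}})=0$. Route~(B) is the correct direction but is missing the decisive step. Taking a countable collection of good rational subcurves and pooling their null sets does not yet handle pairs $s<t$ for which $\gamma|_{[s,t]}\notin\Gamma_0$ but $[s,t]$ is not contained in any of those rational good intervals; a bare Lindel\"of/Fubini remark does not close this gap.

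The paper fills this in as follows. It first enlarges the exceptional family to $\Gamma_0':=\{\gamma:\liminf_i\int_\gamma\sigma_i\,ds=\infty\}$; this contains your $\Gamma_0$, still has AM-modulus zero, and transparently satisfies ``good $\Rightarrow$ subcurves good'' (since $\int_{\gamma'}\sigma_i\le\int_\gamma\sigma_i$). For a fixed $\gamma$, it then introduces the countable collection $\mathcal{C}_0(\gamma)$ of \emph{maximal} intervals $I\subset[a,b]$ such that every compact $J\subset I$ gives $\gamma|_J\notin\Gamma_0'$, and adjoins the endpoints $\{\inf I,\sup I\}$ of each such $I$ to $N_\gamma$, together with the null sets of the good rational subcurves. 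Now if $s,t\notin N_\gamma$ and $\gamma|_{[s,t]}\notin\Gamma_0'$, then $[s,t]$ lies in some maximal $I$ with $s,t$ in its interior, hence $[s,t]\subset[a_0,b_0]\subset I$ for rational $a_0,b_0$, and the pooled null set handles it; otherwise $\gamma|_{[s,t]}\in\Gamma_0'$ and the $\sigma_i$ term makes the right side infinite. This maximal-interval device is the missing ingredient in your route~(B); once you add it (either with $\Gamma_0'$ as the paper does, or directly with your $\Gamma_0$, which has the same hereditary property), the argument goes through.
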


\begin{proof}
Let $u\in BV_{AM}(X:V)$ and $(\rho_i )_{i\in\N}$. Then there exists $\Gamma$ with AM$(\Gamma)=0$ such that for 
each non constant compact rectifiable curve $\gamma\notin\Gamma$, the relation~\eqref{eq:AMBV} holds for 
$s,t\in \mathrm{dom}(\gamma )\backslash \gamma^{-1}(N_\gamma )$ where $\mathcal{H}^1(N_\gamma )=0$. Since 
AM$(\Gamma)=0$, by Lemma~\ref{lem:KosMac} there exists a sequence of nonnegative Borel functions $(g_i)_i$ such that
\[
\liminf_{i\to\infty}\int_Xg_i\,d\mu <\infty\quad\text{and}\quad \liminf_{i\to\infty}\int_\gamma g_i\,ds =\infty\;\;\forall\gamma\in\Gamma.
\]
Now let $\Gamma_0$ be the family of all nonconstant compact rectifiable curves $\gamma$ in $X$ for which we have
$\liminf_{i\to\infty}\int_\gamma g_ids =\infty$. Then AM$(\Gamma_0)=0$ and each subcurve of a curve that is not
in $\Gamma_0$ is 
also not in $\Gamma_0$. For each $\varepsilon>0$, since $\Gamma\subset\Gamma_0$, we have that for each 
$\gamma\notin\Gamma_0$,
\begin{equation}\label{eq:bound2}
\Vert u(\gamma (s))-u(\gamma (t))\Vert
\leq\liminf_{i\to\infty}\int_{\gamma |_{[s,t]}}\rho_i ds \leq\liminf_{i\to\infty}\int_{\gamma |_{[s,t]}}\rho_i+\varepsilon g_i ds
\end{equation}
for every $s,t\notin\gamma^{-1}(N_\gamma)$. 

If $\gamma\in\Gamma_0$ such that every subcurve of $\gamma$ also belongs to $\Gamma_0$, then for each
$s,t\in [a,b]$ with $s<t$ we have $\liminf_{j\to\infty}\int_{\gamma\vert_{[s,t]}}g_i\, ds=\infty$, and so the choice of
$N_\gamma=\emptyset$ works. If it is not the case that every subcurve of $\gamma$ also belongs to $\Gamma_0$, 
then let $\mathcal{C}_0(\gamma)$ be the collection of all non-degenerate (that is, containing more than one point)
intervals $I\subset[a,b]$ for which, whenever $J$ is a compact subinterval of $I$ we must have 
$\gamma\vert_J\not\in\Gamma_0$, and whenever $J$ is a compact subinterval of $[a,b]$ containing $I$ in its interior,
we must have $\gamma\vert_J\in\Gamma_0$. 
By the maximality of the intervals in the collection $\mathcal{C}_0(\gamma)$, two intervals in this collection are either disjoint
or are equal as intervals. Moreover, these intervals have non-empty interior. It follows that as $\Q$ is dense in $\R$,
the collection $\mathcal{C}_0(\gamma)$ is countable.

With $\gamma:[a,b]\to X$, consider all $a_0,b_0\in[a,b]\cap\Q$ with $a_0<b_0$ for which 
$\gamma\vert_{[a_0,b_0]}\not\in\Gamma_0$, that is,
$\liminf_{i\to\infty}\int_{\gamma\vert_{[a_0,b_0]}}g_i\, ds<\infty$; hence there is a corresponding null set
$N[a_0,b_0]\subset[a_0,b_0]$ with $\mathcal{H}^1(N[a_0,b_0])=0$, so that for each $s,t\in[a_0,b_0]\setminus N[a_0,b_0]$
we have~\eqref{eq:bound2} holding true. 
Let $\mathcal{C}(\gamma)$ be the collection of all such $[a_0,b_0]\subset[a,b]$, and
set 
\[
N_\gamma:=\bigcup_{[a_0,b_0]\in\mathcal{C}(\gamma)}N[a_0,b_0]\cup
  \bigcup_{J\in\mathcal{C}_0(\gamma)}\{\inf J, \sup J\}.
\] 
Note that as $a_0,b_0\in\Q$, the collection
$\mathcal{C}(\gamma)$ is a countable collection. Hence $\mathcal{H}^1(N_\gamma)=0$ by the subadditivity of $\mathcal{H}^1$
on $[a,b]$. Now let $s,t\in[a,b]\setminus N_\gamma$ with $s<t$. If $[s,t]\subset[a_0,b_0]$ for some 
$[a_0,b_0]\in\mathcal{C}(\gamma)$, then~\eqref{eq:bound2} holds. If there is no $[a_0,b_0]\in\mathcal{C}(\gamma)$ for which
$[s,t]\subset[a_0,b_0]$, then we must have that $\liminf_{i\to\infty}\int_{\gamma\vert_{[s,t]}}g_i\, ds=\infty$, and so we now have
\[
\Vert u(\gamma (s))-u(\gamma (t))\Vert \le \liminf_{i\to\infty}\int_{\gamma\vert_{[s,t]}}(\rho_i+\eps g_i)\, ds=\infty.
\] 
Therefore $(\rho_i+\varepsilon g_i)_i$ satisfies \eqref{eq:AMBV} for every non constant compact rectifiable curve. 
Moreover, since $\varepsilon$ is arbitrary, one can approach the energy $\Vert D_{AM}u\Vert (X)$ just by 
taking the infimum over the upper bounds of $u$ that verify \eqref{eq:AMBV} for every non constant compact rectifiable curve.
\end{proof}

\begin{lemma}\label{lem:Leibnitz}
Suppose that $(\rho_i)_i$ is an AM--bounding sequence for $u$ and that $\eta$ is a non-negative $L$-Lipschitz function
with support in a bounded set $U$; moreover, suppose that $\eta$ is constant on an open set $V\Subset U$. Then
$(\eta \rho_i+L\, |u|\, \chi_{U\setminus V})_i$ is an AM--bounding sequence for $\eta u$.
\end{lemma}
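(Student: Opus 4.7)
The plan is to verify the defining AM-bounding inequality for $\eta u$ directly, via a partition-and-telescope argument along AM-a.e.\ rectifiable curves, using two auxiliary observations. The first is that $L\chi_{U\setminus V}$ is an upper gradient of $\eta$ along every rectifiable curve: for any arclength-parametrized rectifiable $\gamma\colon[s',t']\to X$,
\[
|\eta(\gamma(s'))-\eta(\gamma(t'))|\le L\int_{\gamma|_{[s',t']}}\chi_{U\setminus V}\,ds.
\]
This holds because $\eta\circ\gamma$ is $L$-Lipschitz, hence absolutely continuous, and its derivative vanishes a.e.\ on the open sets $\gamma^{-1}(V)$ (where $\eta$ is constant) and $\gamma^{-1}(X\setminus\operatorname{supp}\eta)$ (where $\eta\equiv0$), leaving it supported in $\gamma^{-1}(U\setminus V)$ and bounded there by $L$. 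The second observation is that for AM-a.e.\ rectifiable curve $\gamma$ and $s,t\in[a,b]\setminus N_\gamma$, the quantity $q:=\liminf_i\int_{\gamma|_{[s,t]}}\rho_i$ is finite (otherwise the AM-bound on $u$ would be vacuous), and therefore $\Vert u\circ\gamma\Vert\le\Vert u(\gamma(s))\Vert+q$ on $[s,t]\setminus N_\gamma$.

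Given these, I would partition $[s,t]$ as $s=t_0<\cdots<t_n=t$ with all $t_j\in[a,b]\setminus N_\gamma$ (possible by $\mathcal{H}^1$-nullity of $N_\gamma$) and mesh $\delta_n:=\max_j(t_{j+1}-t_j)\to0$. For each $j$, choose $\xi_j\in[t_j,t_{j+1}]$ minimizing $\eta\circ\gamma$; then $\eta(\gamma(\xi_j))\le\eta(\gamma(\tau))$ on $[t_j,t_{j+1}]$ and $\eta(\gamma(t_j))\le\eta(\gamma(\xi_j))+L\delta_n$. The product-rule identity
\[
\eta u(\gamma(t_{j+1}))-\eta u(\gamma(t_j))=\eta(\gamma(t_j))[u(\gamma(t_{j+1}))-u(\gamma(t_j))]+[\eta(\gamma(t_{j+1}))-\eta(\gamma(t_j))]u(\gamma(t_{j+1}))
\]
combined with the triangle inequality, the AM-bound on $u$, and the upper-gradient bound of the first step yields two sums. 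For the first, the minimizer bound gives $\sum_j\eta(\gamma(\xi_j))\int_{\gamma_j}\rho_i\le\int_{\gamma|_{[s,t]}}\eta\rho_i$ (pointwise in $i$), and finite-sum subadditivity of $\liminf_i$ together with the Lipschitz residual $L\delta_n$ on $\eta(t_j)-\eta(\xi_j)$ bounds this sum by $\liminf_i\int_{\gamma|_{[s,t]}}\eta\rho_i+L\delta_n q$. For the second sum, the pointwise AM-bound $\Vert u(\gamma(t_{j+1}))\Vert\le\Vert u(\gamma(\tau))\Vert+\liminf_i\int_{\gamma_j}\rho_i$ for $\tau\in[t_j,t_{j+1}]\setminus N_\gamma$, integrated against $\chi_{U\setminus V}(\gamma(\tau))\,d\tau$ on each subinterval and then summed, gives $\sum_j\Vert u(\gamma(t_{j+1}))\Vert\int_{\gamma_j}\chi_{U\setminus V}\le\int_{\gamma|_{[s,t]}}\Vert u\Vert\chi_{U\setminus V}+\delta_n q$. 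Combining and letting $n\to\infty$ makes the $O(\delta_n q)$ residuals vanish, and since the term $L\int\Vert u\Vert\chi_{U\setminus V}$ is independent of $i$, the right-hand side collapses to $\liminf_i\int_{\gamma|_{[s,t]}}(\eta\rho_i+L\Vert u\Vert\chi_{U\setminus V})$, as required.

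The principal obstacle is the absence of any uniform-in-$i$ control on $\int_{\gamma|_{[s,t]}}\rho_i$: only $\liminf_i$ of these is known to be finite. A naive Riemann-sum approximation of $\eta\rho_i$ by $\eta(\gamma(t_j))\rho_i$ would introduce an $O(\delta_n)\cdot\limsup_i\int\rho_i$ error that cannot be controlled. The minimizer-$\xi_j$ device circumvents this, because the lower-envelope bound $\eta(\xi_j)\le\eta\circ\gamma$ makes the principal estimate hold pointwise in $i$ with no error, confining the unavoidable Lipschitz residual to the finite quantity $q$; an analogous subinterval-integration argument in the second sum bypasses any Riemann-sum convergence issue for the potentially discontinuous $\Vert u\circ\gamma\Vert$.
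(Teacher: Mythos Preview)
Your approach is the same partition--telescope--product-rule strategy as the paper's, but you are considerably more careful at the step where the partition sums are compared to the integral $\int_{\gamma|_{[s,t]}} g_i\,ds$. The paper simply asserts that the step-function expressions converge because $u\circ\gamma$ and $\Lip\eta\circ\gamma$ are Borel; your minimizer-$\xi_j$ device for the first sum and the subinterval-averaging device for the second sum make this passage rigorous by producing inequalities that hold \emph{pointwise in $i$}, with residuals controlled by $\delta_n q$ rather than by the (uncontrolled) $\limsup_i\int\rho_i$. That is a genuine improvement.

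There is, however, one point that needs repair. Your justification for $q<\infty$ --- ``otherwise the AM-bound on $u$ would be vacuous'' --- is not an argument: vacuous truth of the bound for $u$ says nothing about the bound for $\eta u$, and as stated the lemma places no integrability hypothesis on $(\rho_i)_i$. What you actually need is that the family $\Gamma_\infty:=\{\gamma:\liminf_i\int_\gamma\rho_i=\infty\}$ has $\AM(\Gamma_\infty)=0$, so that it may be absorbed into the exceptional family. This follows once $\liminf_i\int_X\rho_i<\infty$ (the only case of interest, and the one present in every application in the paper), since for every $R>0$ the rescaled sequence $(\rho_i/R)_i$ is AM-admissible for $\Gamma_\infty$, giving $\AM(\Gamma_\infty)\le R^{-1}\liminf_i\int_X\rho_i\to0$. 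With $\Gamma_\infty$ discarded, monotonicity of $\liminf$ gives $q_{s,t}\le q_{a,b}<\infty$ for all $s,t$, and the rest of your argument goes through unchanged.
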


\begin{proof}
For each $i\in\N$ we set $g_i:=\eta \rho_i+L\, |u|\, \chi_{U\setminus V}$.

Let $\Gamma$ be the exceptional set for the AM--bounding sequence $(\rho_i)_i$ with respect to $u$; so 
$\AM(\Gamma)=0$, and for each non-constant compact rectifiable curve $\gamma:[a,b]\to X$ with $\gamma\not\in\Gamma$,
we have a null set $N_\gamma\subset [a,b]$ with $\mathcal{H}^1(N_\gamma)=0$ such that whenever
$t,s\in[a,b]\setminus N_\gamma$ with $t<s$, we have
\[
|u(\gamma(t))-u(\gamma(s))|\le \liminf_{i\to\infty}\int_{\gamma\vert_{[t,s]}}\rho_i\, ds.
\]
Fix such $s,t\in[a,b]$. 
Consider a partition $t=t_0<t_1<\cdots<t_k=s$ of the interval $[t,s]$ so that $t_1,\cdots, t_{k-1}\not\in N_\gamma$.
Then
\begin{align*}
|u(\gamma(t))\eta(\gamma(t))-u(\gamma(s))\eta(\gamma(s))|
&\le\sum_{j=1}^k|u(\gamma(t_{j-1}))\eta(\gamma(t_{j-1}))-u(\gamma(t_{j}))\eta(\gamma(t_{j}))|\\
&\le \sum_{j=1}^k|u(\gamma(t_{j-1}))\eta(\gamma(t_{j-1}))-u(\gamma(t_{j}))\eta(\gamma(t_{j-1}))|\\
 &\hskip 2cm+\sum_{j=1}^k|u(\gamma(t_{j}))\eta(\gamma(t_{j-1}))-u(\gamma(t_{j}))\eta(\gamma(t_{j}))|\\
&\le \liminf_{i\to\infty}\sum_{j=1}^k\int_{\gamma\vert_{[t_{j-1},t_j]}}\left[\eta(\gamma(t_{j-1}))\rho_i\, 
+|u(\gamma(t_j))|\text{Lip}\,\eta\, \right]\, ds.
\end{align*}
The above must be true for all such partitions of the interval $[t,s]$.
Since $u\circ\gamma$ and $\text{Lip}\,\eta\circ\gamma$ are Borel functions on $[a,b]$, it follows that
\[
|u(\gamma(t))\eta(\gamma(t))-u(\gamma(s))\eta(\gamma(s))|
\le\liminf_{i\to\infty}\int_{\gamma\vert_{[t,s]}}g_i\, ds.\qedhere
\]
\end{proof}

\begin{lemma}\label{lem:BVinBVAM}
Let $u\in BV(X:V)$.  Then the upper gradients of an approximating sequence form an AM-bounding sequence for $u$. 
In particular $u\in BV_{AM}(X:V)$ with $\Vert D_{AM}u\Vert(X)\le \Vert Du\Vert(X)$. 
\end{lemma}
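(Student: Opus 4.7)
The approach is to pass to a convenient subsequence of an approximating sequence and combine the classical upper gradient inequality for each $u_i$ with a Fuglede-type observation about curves meeting a $\mu$-null set. Fix $u\in BV(X:V)$ and choose an approximating sequence $(u_i)_{i\in\N}\subset N^{1,1}(X:V)$ with $u_i\to u$ in $L^1(X:V)$. Extract a subsequence along which $\int_X g_{u_i}\, d\mu$ converges to the original $\liminf_{i\to\infty}\int_X g_{u_i}\, d\mu$, and then a further subsequence (both relabeled) along which $u_i\to u$ $\mu$-almost everywhere in $X$; the second extraction preserves the limit of the integrals, so both properties now hold simultaneously. Fix a Borel null set $N\subset X$ containing every $x$ for which $u_i(x)\not\to u(x)$.

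I next identify two exceptional families of curves to discard. For each $i$, the Newton-Sobolev representative $u_i$ satisfies the upper gradient inequality with respect to $g_{u_i}$ along all nonconstant compact rectifiable curves outside a family $\Gamma_i$ with $\Mod(\Gamma_i)=0$. Second, let $\Gamma_N^+$ be the family of nonconstant compact rectifiable curves $\gamma:[a,b]\to X$ with $\mathcal{H}^1(\gamma^{-1}(N))>0$. Writing $\Gamma_N^+=\bigcup_{m\in\N}\{\gamma:\mathcal{H}^1(\gamma^{-1}(N))>1/m\}$ and using the admissible function $m\chi_N$ on the $m$th stratum (noting $\int_X m\chi_N\, d\mu=0$ since $\mu(N)=0$), countable subadditivity of $\Mod$ yields $\Mod(\Gamma_N^+)=0$. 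Hence $\Gamma:=\Gamma_N^+\cup\bigcup_i\Gamma_i$ satisfies $\Mod(\Gamma)=0$, and since $\AM\le\Mod$, also $\AM(\Gamma)=0$.

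Now fix any nonconstant compact rectifiable curve $\gamma:[a,b]\to X$ with $\gamma\notin\Gamma$, and set $N_\gamma:=\gamma^{-1}(N)$, a subset of $[a,b]$ of zero $\mathcal{H}^1$-measure. For any $s,t\in[a,b]\setminus N_\gamma$ with $s<t$, the points $\gamma(s),\gamma(t)$ lie outside $N$, so $u_i(\gamma(s))\to u(\gamma(s))$ and $u_i(\gamma(t))\to u(\gamma(t))$ in $V$. Combining this with the upper gradient inequality $\|u_i(\gamma(s))-u_i(\gamma(t))\|\le\int_{\gamma|_{[s,t]}}g_{u_i}\, ds$ (valid for every $i$ since $\gamma\notin\Gamma_i$) and taking $\liminf_{i\to\infty}$ gives
\[
\|u(\gamma(s))-u(\gamma(t))\|\le \liminf_{i\to\infty}\int_{\gamma|_{[s,t]}}g_{u_i}\, ds.
\]
Thus $(g_{u_i})_i$ is an AM-bounding sequence for $u$, so $u\in BV_{AM}(X:V)$ with $\|D_{AM}u\|(X)\le\liminf_{i\to\infty}\int_X g_{u_i}\, d\mu$. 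Choosing the original approximating sequence so that this latter quantity is arbitrarily close to $\|Du\|(X)$ yields $\|D_{AM}u\|(X)\le\|Du\|(X)$. The main technical point is the Fuglede-type observation $\Mod(\Gamma_N^+)=0$: it is precisely this that lets us upgrade $\mu$-a.e.\ convergence into convergence along AM-a.e.\ curve at $\mathcal{H}^1$-a.e.\ point of its parameter, which is needed to take $\liminf_i$ inside the upper gradient inequality. Everything else reduces to standard subsequence extraction and countable subadditivity of $1$-modulus.
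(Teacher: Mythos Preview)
Your proof is correct and follows essentially the same route as the paper's: extract a subsequence for pointwise a.e.\ convergence, discard the curves in $\Gamma_N^+$ via $\Mod(\Gamma_N^+)=0$, and pass to the $\liminf$ in the upper gradient inequality. The only cosmetic difference is that you work with minimal weak upper gradients $g_{u_i}$ (hence also throw out the exceptional families $\Gamma_i$), while the paper uses genuine upper gradients $g_i$ so that step is unnecessary.
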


\begin{proof}
If $u\in BV(X:V)$, then there exists a sequence $(u_i)_{i\in\N}\in N^{1,1}(X:V)$ with upper gradients $g_i$ such that 
\[
\limsup_{i\to\infty}\int_X g_id\mu <\infty,
\]
and such that $u_i\to u$ in $L^1(X:V)$. In particular, (passing to a subsequence 
if necessary) $u_i\to u$ pointwise almost everywhere in $X$. Then there exists a null set 
$N$ such that $\lim_{i\to\infty} u_i(x)=u(x)$ for every $x\in X\setminus N$. By enlarging $N$ if necessary, we may also
assume that $N$ is a Borel set (recall that $\mu$ is Borel regular).
Hence, by considering the non-negative Borel measurable function $\rho=\infty\chi_N$ on $X$, we know 
that if $\Gamma_N^+$ is the collection of all compact non-constant rectifiable curves $\gamma$ in $X$
with $\mathcal{H}^1(\gamma^{-1}(N))>0$, then $\AM(\Gamma_N^+)\le \Mod(\Gamma_N^+)=0$.

Let $\gamma:[a,b]\to X$ be a 
non-constant compact rectifiable curve such that $\gamma\not\in\Gamma_N^+$. We set $N_\gamma:=\gamma^{-1}(N)$.
Then for $s,t\in[a,b]\setminus N_\gamma$ with $s<t$, we have that
\[
\lim_{i\to\infty}u_i(\gamma(t))-u_i(\gamma(s))=u(\gamma(t))-u(\gamma(s)).
\]
Moreover, since $g_i$ is an upper gradient of $u_i$, it follows that
\[
\Vert u_i(\gamma(t))-u_i(\gamma(s))\Vert\le \int_{\gamma\vert_{[s,t]}}g_i\, ds.
\]
Combining the above two, we see that 
for $s,t\in[a,b]\setminus N_\gamma$ with $s<t$, we have
\[
\Vert u(\gamma(t))-u(\gamma(s))\Vert\le \liminf_{i\to\infty}\int_{\gamma\vert_{[s,t]}}g_i\, ds.
\] 
Thus we have shown that $(g_i)_{i\in\N}$ is an AM-bounding sequence
for $u$. Now fix $\varepsilon >0$ and choose $(u_i)_{i\in\N}$ and $(g_i)_{i\in\N}$ such that 
$$\liminf_{i\to\infty}\int_Xg_id\mu\leq \Vert Du\Vert (X)+\varepsilon .$$
Since the previous argument holds for any choice of $(u_i)_{i\in\N}$ and $(g_i)_{i\in\N}$, we have that $(g_i)_{i\in\N}$ 
is an AM-bounding sequence for $u$, and hence,
\[
\Vert D_\mathrm{AM}u\Vert(X)\leq \liminf_{i\to\infty}\int_Xg_id\mu \leq \Vert Du\Vert (X) +\varepsilon,
\]
and then taking $\varepsilon\to 0$ completes the proof.
\end{proof}

\begin{remark}
Notice that condition \eqref{eq:AMBV} holds outside a null set $N_\gamma$, which
depends on the choice of $\gamma$, but as 
seen in the previous proof, whenever $u\in BV(X:V)$, we can choose a null set $N\subset X$ to be 
independant of the curve, such that $N_\gamma=\gamma^{-1}(N)$. 
In the following sections we will prove that $BV(X:V)=BV_{AM}(X:V)$.
However the construction of the 
approximation by Newtonian mappings of a $BV_{AM}$-mapping will yield a sequence of upper gradients 
different to the original AM-bounding sequence for $u$, and so in general we can't assume that \emph{every} 
AM-bounding sequence of $u$ comes with a universal null set $N\subset X$ as above. 
\end{remark}

\subsection{Metric space-valued functions of bounded variation}\label{Sec:MetricBV}

In the prior two sections we considered mappings from the metric space $X$ into a Banach space $V$; in this section we 
consider the case of mappings into a metric space $(Y,d_Y)$. We will assume here that $Y$ is complete and 
separable.  Given $\Omega\subset X$  and $y_0\in Y$, we mean by $f\in L^1(\Omega:Y,y_0)$
that 
\[
\int_\Omega d_Y(f(\cdot),y_0)d\mu<\infty.
\]
We say that $f\in L^1_{loc}(X:Y)$ if for each $y_0\in Y$ (or, equivalently, for some $y_0\in Y$)
we have that the real-valued function $x\mapsto d_Y(f(x),y_0)$ belongs to $L^1_{loc}(X)$.

We begin with the intrinsic definitions, analogous to the definitions of $BV(X:V)$ and $BV_{AM}(X:V)$ given above. 

\begin{lemma}\label{lem:BV-AM-metric}
Let $u:X\to Y$ be a measurable function. Then the following are equivalent:
\begin{enumerate}
\item[(a)] There is a family $\Gamma_0$ of curves in $X$ with $\AM(\Gamma_0)=0$ and a sequence $(\rho_i)_{i\in\N}$
of non-negative Borel measurable functions on $X$ such that for each non-constant compact rectifiable curve 
$\gamma:[a,b]\to X$ with $\gamma\not\in\Gamma_0$ there is a set $N_\gamma\subset[a,b]$ with 
$\mathcal{H}^{-1}(N_\gamma)=0$ such that for each $s,t\in[a,b]\setminus N_\gamma$ with $s<t$ we have
\[
d_Y(u(\gamma(s)),u(\gamma(t)))\le \liminf_{i\to\infty}\int_{\gamma\vert_{[s,t]}}\rho_i\, ds,
\]
with 
\[
\sup_i\int_X\rho_i\, d\mu<\infty.
\]
\item[(b)] For every Banach space $V$ and isometric embedding $\Phi:Y\to V$ we have that $\Phi\circ u\in BV_{AM}(X;V)$.
\item[(c)] There is a Banach space $V$ and an isometric embedding 
$\Phi:Y\to V$ such that $\Phi\circ u\in BV_{AM}(X;V)$.
\end{enumerate}
In any (and hence all) of the cases above, we have that
\[
\Vert D_{AM}\Phi\circ u\Vert(X)=\inf_{(\rho_i)_{i\in\N}}\liminf_{i\to\infty} \int_X\rho_i\, d\mu
\]
where the infimum is over all sequences $(\rho_i)_{i\in\N}$ satisfying~(a) above.
\end{lemma}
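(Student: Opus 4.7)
The plan is to establish the cycle $(\text{a})\Rightarrow(\text{b})\Rightarrow(\text{c})\Rightarrow(\text{a})$, with the infimum identity for the energy falling out automatically from the isometric identifications used along the way. The essential observation powering every step is that an isometric embedding $\Phi: Y\to V$ satisfies
\[
\|\Phi(y_1)-\Phi(y_2)\|_V = d_Y(y_1,y_2),
\]
so every inequality of the form ``$d_Y(u(\gamma(s)),u(\gamma(t)))\le\liminf_i\int_{\gamma|_{[s,t]}}\rho_i\,ds$'' transports verbatim to an inequality for $\Phi\circ u$ in $V$, and conversely.

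For $(\text{b})\Rightarrow(\text{c})$ I would simply invoke the existence of a concrete isometric embedding; since $(Y,d_Y)$ is separable, the Kuratowski embedding $\Phi:Y\to \ell^\infty(Y)$ defined (after fixing $y_0\in Y$) by $\Phi(y)(z)=d_Y(y,z)-d_Y(y_0,z)$ works and additionally gives $\|\Phi(y)\|_\infty=d_Y(y,y_0)$, a useful feature for the integrability bookkeeping below. For $(\text{c})\Rightarrow(\text{a})$ I take an AM-bounding sequence $(\rho_i)$ for $\Phi\circ u$ with $\liminf_i\int_X\rho_i\,d\mu<\infty$, and use the isometric identity above to conclude that the same $(\rho_i)$ witnesses~(a); replacing $(\rho_i)$ by an appropriate subsequence (or applying the doubling trick in Lemma~\ref{lem:KosMac}/Lemma~\ref{rem:AllCurves}) gives the supremum-bound formulation if required.

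For $(\text{a})\Rightarrow(\text{b})$, fix any Banach space $V$ and any isometric embedding $\Phi:Y\to V$. The same isometric identity shows that every $(\rho_i)$ satisfying~(a) is also an AM-bounding sequence for $\Phi\circ u$ on the same exceptional family $\Gamma_0$ and the same null sets $N_\gamma$. The step that needs a little care is the $L^1$-integrability of $\Phi\circ u$ in the Bochner sense, which is part of the definition of $BV_{AM}(X;V)$. Here I would observe that (i) strong measurability of $\Phi\circ u$ is automatic from the measurability of $u$ into the separable space $Y$, since $\Phi(Y)$ is a separable subset of $V$ and $\Phi$ is continuous; and (ii) replacing $\Phi$ by the still-isometric affine embedding $\Phi-\Phi(y_0)$ does not alter AM-bounding sequences (constants have zero contribution), while yielding $\|(\Phi-\Phi(y_0))(u(x))\|_V=d_Y(u(x),y_0)$. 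Integrability then follows from $u\in L^1(X:Y,y_0)$, the underlying hypothesis carried through the section.

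Finally, the energy identity
\[
\|D_{AM}(\Phi\circ u)\|(X)=\inf_{(\rho_i)}\liminf_{i\to\infty}\int_X\rho_i\,d\mu
\]
is a direct consequence of the bidirectional isometric transfer: any sequence admissible for one side of the identity is admissible for the other with the same integrals, so the two infima coincide. I do not foresee a substantial obstacle; the most delicate checkpoint is making sure strong measurability and Bochner integrability of $\Phi\circ u$ are handled properly for an \emph{arbitrary} isometric embedding in~(b), which is why the translation trick $\Phi\mapsto \Phi-\Phi(y_0)$ is the right workhorse.
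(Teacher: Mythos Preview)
Your proposal is correct and follows essentially the same approach as the paper: both arguments rest entirely on the isometric identity $d_Y(u(x),u(z))=\|\Phi(u(x))-\Phi(u(z))\|_V$, which transports AM-bounding sequences back and forth verbatim, and both invoke the Kuratowski embedding into $\ell^\infty$ for $(\text{b})\Rightarrow(\text{c})$. You are in fact more careful than the paper on two points it leaves implicit---the passage from $\liminf_i\int_X\rho_i\,d\mu<\infty$ to a subsequence with bounded supremum, and the Bochner-$L^1$ bookkeeping via the translation $\Phi\mapsto\Phi-\Phi(y_0)$---though note that the $L^1$ hypothesis on $u$ is not actually part of condition~(a) as stated, so this is really an ambiguity in the lemma rather than a gap in either proof.
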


\begin{proof}
If $V$ is a Banach space and $\Phi$ is an isometric embedding of $Y$ into $V$, then for each $x,z\in X$ we have that
$d_Y(u(x),u(z))=\Vert \Phi(u(x))-\Phi(u(z))\Vert$, and so we know that (a) implies (b) and that (b) implies (c).
Indeed, every complete separable metric space can be isometrically embedded in the Banach space $\ell^\infty$.

Thus it only remains to show that (c) implies (a). To this end, suppose that $V$ is a Banach space, $\Phi$ an isometric
embedding of $Y$ into $V$, and that $\Phi\circ u\in BV_{AM}(X;V)$. Then there is a sequence $(\rho_i)_{i\in\N}$
that is an AM-bounding sequence for $\Phi\circ u$, and a family $\Gamma_0$ with $\AM(\Gamma_0)=0$ such that
whenever
$\gamma:[a,b]\to X$ with $\gamma\not\in\Gamma_0$ there is a set $N_\gamma\subset[a,b]$ with 
$\mathcal{H}^{-1}(N_\gamma)=0$ such that for each $s,t\in[a,b]\setminus N_\gamma$ with $s<t$ we have
\[
\Vert\Phi\circ u(\gamma(s))-\Phi\circ u(\gamma(t))\Vert\le \liminf_{i\to\infty}\int_{\gamma\vert_{[s,t]}}\rho_i\, ds,
\]
with 
\[
\sup_i\int_X\rho_i\, d\mu<\infty.
\]
As $\Phi$ is an isometric embedding of $Y$ into $V$, it follows that 
$d_Y(u(\gamma(s)), u(\gamma(t)))=\Vert\Phi\circ u(\gamma(s))-\Phi\circ u(\gamma(t))\Vert$. The condition~(a) follows.

The above argument shows that the sequence $(\rho_i)_{i\in\N}$ satisfies~(a) if and only if it is an AM-bounding sequence
for $\Phi\circ u$. The final claim of the above lemma now follows.
\end{proof}

\begin{definition}
We say that a map $u:X\to Y$ is in $BV_{AM}(X;Y)$ if $u\in L^1(X:Y)$ and there is a sequence $(\rho_i)_{i\in\N}$ satisfying
the hypothesis of Lemma~\ref{lem:BV-AM-metric}~(a).
\end{definition}

\begin{lemma} \label{lem-isom}
Let $\Phi:Y\to V$ be an isometric embedding of the metric space $Y$ into a Banach space $V$. Suppose that
\begin{enumerate}
\item[(a)] $u\in BV_{AM}(X;V)$, and
\item[(b)] there is a set $N\subset X$ with $\mu(N)=0$ such that for each $x\in X\setminus N$ we have that
$u(x)\in \Phi(Y)$.
\end{enumerate}
Then $u\circ\Phi^{-1}\in BV_{AM}(X;Y)$. Here $\Phi^{-1}$ stands in for the inverse map of the bijective map
$\Phi:Y\to\Phi(Y)$.
\end{lemma}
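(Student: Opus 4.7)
The idea is to reduce the statement to the equivalence already established in Lemma~\ref{lem:BV-AM-metric}, together with the invariance of the AM--bounding property under modification on a $\mu$--null set given by Lemma~\ref{lem:null-mod}. The map $\Phi^{-1}\circ u$ (the lemma appears to have a slight typo, as $\Phi^{-1}$ must be post-composed with $u$) is a priori only defined on $X\setminus N$, so the first task is to choose a measurable extension to all of $X$, fix some basepoint $y_0\in Y$, and set
\[
v(x):=\begin{cases}\Phi^{-1}(u(x)) & \text{if } x\in X\setminus N,\\ y_0 & \text{if }x\in N.\end{cases}
\]
Since $\Phi$ is an isometric embedding and $N$ is null (after enlarging it to be Borel, as $\mu$ is Borel regular), $v$ is a measurable map from $X$ into $Y$.

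Next I would check that $v\in L^1(X:Y)$. Because $\Phi$ is an isometry, $d_Y(v(x),y_0)=\|u(x)-\Phi(y_0)\|$ for $x\in X\setminus N$, so the integrability of $x\mapsto d_Y(v(x),y_0)$ follows directly from the integrability of $x\mapsto \|u(x)-\Phi(y_0)\|$, which is a consequence of $u\in L^1(X:V)$ and the triangle inequality.

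The heart of the argument is then the following observation. By the definition of $v$, the composition $\Phi\circ v:X\to V$ agrees with $u$ on $X\setminus N$ and hence $\mu$-almost everywhere. Since $u\in BV_{AM}(X:V)$ by hypothesis, Lemma~\ref{lem:null-mod} yields that $\Phi\circ v\in BV_{AM}(X:V)$ as well, with the same AM--bounding sequences. Now apply the implication (c)$\Rightarrow$(a) of Lemma~\ref{lem:BV-AM-metric} to $v$: we have produced a Banach space $V$ and an isometric embedding $\Phi:Y\to V$ such that $\Phi\circ v\in BV_{AM}(X:V)$, which by Lemma~\ref{lem:BV-AM-metric} is equivalent to the existence of an AM--bounding sequence $(\rho_i)_{i\in\N}$ in the metric-valued sense for $v$. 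Combined with the $L^1$--integrability established above, this is precisely the statement that $v\in BV_{AM}(X:Y)$.

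There is no serious obstacle here; the only point requiring care is the measurable selection giving $v$, which is handled trivially by extending by the constant $y_0$ on the null set $N$, and the verification that the equivalence of Lemma~\ref{lem:BV-AM-metric} applies without change when the null set of exceptional parameters is allowed to include the curve-preimage of $N$ (this is why Lemma~\ref{lem:null-mod} is needed before invoking Lemma~\ref{lem:BV-AM-metric}). It is also worth noting that the argument automatically gives $\Vert D_{AM}v\Vert(X)=\Vert D_{AM}u\Vert(X)$ by the last assertions of Lemmas~\ref{lem:null-mod} and~\ref{lem:BV-AM-metric}.
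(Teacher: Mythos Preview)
Your proposal is correct and follows essentially the same route as the paper: modify $u$ on the null set $N$ so that its values lie in $\Phi(Y)$, invoke Lemma~\ref{lem:null-mod} to retain membership in $BV_{AM}(X;V)$, and then apply Lemma~\ref{lem:BV-AM-metric}. Your write-up is more detailed (you explicitly verify $L^1$-integrability and correctly flag the typo $u\circ\Phi^{-1}$ versus $\Phi^{-1}\circ u$), but the underlying argument is the same.
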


\begin{proof}
Since any modification of $u$ on a set of $\mu$-measure zero results in the same equivalence class of $u$ in
$BV_{AM}(X;V)$ (see Lemma~\ref{lem:null-mod}), we can modify $u$ on $N$ by setting $u(x)$ to be some fixed
point in $\Phi(Y)$ if $x\in N$. The conclusion now follows from the previous lemma.
\end{proof}

Unlike $BV_{AM}(X;Y)$, the situation for $BV(X; Y)$ is more complicated. 

\begin{definition}
We say that a map $u:X\to Y$ is in $BV(X;Y)$ if there is a sequence $(u_k)_{k\in\N}$ from 
$N^{1,1}(X;Y)$ such that $u_k\to u$ in $L^1(X;Y)$ and $\sup_{k\in\N}\int_Xg_{u_k}\, d\mu<\infty$.
Here $g_{u_k}$ is the minimal $1$-weak upper gradient of $u_k$ in the sense of~\cite[page~161]{HKSTbook}.
\end{definition}

We will see that $BV_{AM}(X;V)=BV(X;V)$ whenever $V$ is a Banach space (see
Theorem~\ref{thm:main}), and as in the proof of Lemma~\ref{lem:BVinBVAM}, we can see that
$BV(X;Y)\subset BV_{AM}(X;Y)$.
However, $BV(X;Y)$ is in general a
strictly smaller subset of $BV_{AM}(X;Y)$.
This supports the choice of $BV_{AM}(X;Y)$ as the space of mappings of bounded variation in~\cite{Lah1, Lah2}.

\begin{example}
Consider $X=[-1,1]$ and $Y=\{ 0,1\}$. Let $u:=\chi _{[0,1]}$ and $\rho_i:=i\chi_{[-1/i,0]}$. Then for each $x,y\in [-1,1]$, if 
$x,y<0$ or $x,y\geq 0$ then $|u(x)-u(y)|=0$ so it is immediate that $(\rho_i)_i$ satisfies the upper bound inequality. 
If $x<0$ and $y\ge 0$ then
\[
\liminf_{i\to\infty}\int_x^y i\chi_{[-1/i,0]} d\mathcal{L}^1=
1=|u(x)-u(y)|.
\]
Thus the sequence $(\rho_i)_{i\in\N}$ is an AM-bounding sequence for $u$ (and indeed, it is a strong
bounding sequence for $u$.
Therefore we know that $u\in BV_{AM}(X:Y)$. However Newtonian mappings 
are absolutely continuous on $[-1,1]$, and so, since $Y=\{ 0,1\}$, they must be constant; hence it is not possible 
to approximate (in $L^1$ norm) $u$ by Newtonian mappings, proving that $u\notin BV(X:Y)$.
\end{example}

While the above example shows how topological obstructions can prevent approximations by $N^{1,1}$-maps, the next example
provides a more analytical obstruction.

\begin{example}
Let $X=[-1,1]$ and $Y=(\{0\}\times[0,1])\cup([0,1]\times\{1\})\cup(\{1\}\times[0,1])$ be both equipped with the Euclidean metric,
and $X$ be also equipped with the Lebesgue measure $\mathcal{L}^1$. Let $u\in BV_{AM}(X:Y)$ be given by 
$u(x)=(0,0)$ when $-1\le x\le 0$ and $u(x)=(1,0)$ when $0<x\le 1$. If $(u_k)_k$ is a sequence of functions from
$N^{1,1}(X:Y)$ such that $u_k\to u$ in $L^1(X)$, then for sufficiently large $k$ we know that are points $x_k,y_k\in X$ 
with $|x_k+1|<\tfrac{1}{10}$ and $|y_k-1|<\tfrac{1}{10}$ such that $|u(x_k)-(0,0)|<\tfrac{1}{10}$ and $|u(y_k)-(1,0)|<\tfrac{1}{10}$. 
On the other hand, by the
absolute continuity on $[-1,1]$ of functions in $N^{1,1}(X:Y)$, we have that 
$\int_Xg_{u_k}\, d\mathcal{L}^1=\text{length}(u_k)\ge 1+2\times\, \tfrac{9}{10}=\tfrac{14}{5}$. In inferring the above, we used
the fact that $u_k$ is also a path in $Y$. On the other hand, $\Vert D_{AM}u\Vert(X)=1$, and so it is not possible to
have $\Vert D_{AM}u\Vert(X)=\inf_{(u_k)\subset N^{1,1}(X:Y)}\, \liminf_{k\to\infty}\int_Xg_{u_k}\, d\mathcal{L}^1$.
\end{example}

The above example does not preclude an $L^1$-approximation of the AM--BV function by a sequence of 
Newton-Sobolev functions. The next example gives a metric obstruction to the existence of even such an approximation.

\begin{example}
Let $X=[-1,1]$ and $Y=(\{0\}\times[-1,1])\cup\{(x,\sin(1/x))\, :\, 0<x\le 1\}$ be both equipped with the Euclidean metrics, and
$X$ also be equipped with the $1$-dimensional Lebesgue measure. Let $u:X\to Y$ be given by
$u(x)=(0,0)$ if $-1\le x\le 0$, and $u(x)=(1,\sin(1))$ when $0<x\le 1$. Then 
$\Vert D_{AM}u\Vert(X)=\sqrt{1+\sin^2(1)}$, but due to the absolute continuity of functions in $N^{1,1}(X:Y)$ and the 
lack of paths in $Y$ connecting $u(-1)$ to $u(1)$, there can be no sequence of functions in $N^{1,1}(X:Y)$ that 
gives an $L^1$-approximation of $u$.
\end{example}

By the ACL (absolute continuity on lines) property of functions in $N^{1,1}(X:Y)$, the above examples have higher
dimensional analogs, but we will not go into details here.

\begin{remark}
While $BV(X:Y)$ need not equal $BV_{AM}(X:Y)$ in general, we do have a relationship between the two notions. 
Thanks to the Kuratowski embedding theorem (\cite[page~100]{HKSTbook}), 
we can isometrically embed any separable metric space $(Y,d_Y)$ into
the Banach space $\ell^\infty$. With $V$ any Banach space and $\Phi:Y\to V$ an isometric embedding, whenever
$Y$ is complete, thanks to Theorem~\ref{thm:main} and Lemma~\ref{lem-isom}, we know that $BV_{AM}(X:Y)$ is 
the same as the class
\[
\Bigg\lbrace u\in BV(X:V)\, :\, \mu(\{x\in X\, :\, u(x)\not\in \Phi(Y)\})=0\Bigg\rbrace.
\]
\end{remark}

\section{Poincaré inequalities and Semmes Pencil.}\label{Sec:3}
We say that the metric measure space $(X,d,\mu)$ supports a $1$-Poincaré inequality if there are 
constants $C>0,\lambda \geq 1$ such that for each 
$u,g\in L^1_{loc}(X)$, with $g$ an upper gradient of $u$, we have
\[
\vint{B}\Vert u-u_B\Vert d\mu\leq C\mathrm{rad}(B)\vint{\lambda B}gd\mu 
\]
for each ball $B\subset X$.  The metric measure space $X$ supports an AM--Poincaré inequality if there are
$C>0,\,\lambda\geq 1$ so that for each $u\in BV_{AM}(X:V)$ and any AM-upper bound $(\rho_i)_i$ of $u$ we have
\[
\vint{B}\Vert u-u_B\Vert d\mu\leq C\mathrm{rad}(B)\liminf_{i\to\infty }\vint{\lambda B}\rho_i d\mu 
\]
for each ball $B\subset X$.  As with the $1$-Poincar\'e inequality, the AM-Poincar\'e inequality 
implies the following version, which involves the AM-BV energy:

\begin{lemma}\label{eq:PI-BV-AM}
If $X$ supports an AM--Poincar\'e inequality, then for $u\in BV_{AM}(X:V),$ we have that 
\[
\vint{B}\|u-u_B\|d\mu\le C\rad(B)\frac{\|D_{AM}u\|(\lambda B)}{\mu(\lambda B)}
\]
for each ball $B\subset X$.
\end{lemma}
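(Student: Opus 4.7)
My plan is to insert an almost-optimal AM--bounding sequence for $u$ into the AM--Poincar\'e inequality hypothesis and then pass to the infimum, thereby replacing the $\liminf_i \vint{\lambda B}\rho_i\,d\mu$ on its right-hand side with the normalized energy $\|D_{AM}u\|(\lambda B)/\mu(\lambda B)$.

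To execute, I would fix $\varepsilon>0$ and use that the Carath\'eodory construction giving $\|D_{AM}u\|$ as a Radon outer measure (see Section~\ref{section:OuterMeasure}, following Martio) starts from the open-set functional
\[
U\longmapsto \inf_{(\rho_i)_{i\in\N}}\liminf_{i\to\infty}\int_U \rho_i\,d\mu,
\]
where the infimum runs over AM--bounding sequences for $u$. Since $\lambda B$ is a ball and hence open, this description applies, so I may select an AM--bounding sequence $(\rho_i)_{i\in\N}$ for $u$ with
\[
\liminf_{i\to\infty}\int_{\lambda B}\rho_i\,d\mu\le \|D_{AM}u\|(\lambda B)+\varepsilon.
\]
Applying the AM--Poincar\'e inequality (which is valid for any AM--bounding sequence of $u$) to this particular choice and rewriting the averaged integral yields
\begin{align*}
\vint{B}\|u-u_B\|\,d\mu
&\le C\,\rad(B)\,\liminf_{i\to\infty}\vint{\lambda B}\rho_i\,d\mu\\
&= \frac{C\,\rad(B)}{\mu(\lambda B)}\,\liminf_{i\to\infty}\int_{\lambda B}\rho_i\,d\mu\\
&\le \frac{C\,\rad(B)\bigl(\|D_{AM}u\|(\lambda B)+\varepsilon\bigr)}{\mu(\lambda B)}.
\end{align*}
Letting $\varepsilon\to 0$ gives the asserted inequality.

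The only delicate point I foresee is the open-set formula used to justify the existence of a near-optimal $(\rho_i)_{i\in\N}$; but this is precisely the premise of the Carath\'eodory construction that produces $\|D_{AM}u\|$ as a Radon measure, so it should be essentially by definition. Should one need to genuinely localize a globally defined AM--bounding sequence to the ball $\lambda B$, the cutoff procedure of Lemma~\ref{lem:Leibnitz} applied to a Lipschitz function supported on a slightly larger ball and equal to one on $\lambda B$ would achieve this, with the additive $L|u|\chi_{U\setminus V}$ term becoming negligible as the cutoff is pushed toward $\chi_{\lambda B}$.
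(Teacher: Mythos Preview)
Your main line of argument is the natural one, and the paper begins the same way by choosing an AM--bounding sequence $(\rho_k)_k$ for $u$ \emph{in $\lambda B$} with $\liminf_k\int_{\lambda B}\rho_k\,d\mu<\|D_{AM}u\|(\lambda B)+\varepsilon$. The difficulty you gloss over is real, though: the AM--Poincar\'e inequality, as hypothesized, applies to AM--upper bounds of $u$ on all of $X$, whereas the near-optimal sequence furnished by the open-set functional is only an AM--bounding sequence for $u$ \emph{in $\lambda B$}. You cannot feed a merely local bounding sequence into the global hypothesis without further work.

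Your proposed fix---cutting off with $\eta$ supported on a slightly \emph{larger} ball and equal to $1$ on $\lambda B$---goes in the wrong direction: you have no AM--bounding information for $u$ outside $\lambda B$, so Lemma~\ref{lem:Leibnitz} gives you nothing there. The paper instead cuts off \emph{inside} $\lambda B$: take $\eta$ which is $1/\delta$-Lipschitz, equal to $1$ on $\lambda(1-\delta)B$ and supported in $\lambda B$. Then $(\eta\rho_k+\delta^{-1}|u|\chi_{\lambda B\setminus\lambda(1-\delta)B})_k$ is a global AM--upper bound for $\eta u$, and since $\eta u=u$ on $(1-\delta)B$ one applies the AM--Poincar\'e inequality to $\eta u$ on the ball $(1-\delta)B$ (using the double-average form $\vint{}\vint{}\|u(y)-u(x)\|$ to avoid issues with the mean). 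Finally one sends $\delta\to 0$ and then $\varepsilon\to 0$. So the cutoff idea you mention is indeed the right tool, but it must shrink the ball rather than enlarge it.
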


\begin{proof}
Let $\eps>0$ and let $\{\rho_k\}_k$ be an AM-upper bound for $u$ in $\lambda B$ such that 
\[
\liminf_{k\to\infty}\int_{\lambda B}\rho_kd\mu<\|D_{AM}u\|(\lambda B)+\eps.
\]
Let $0<\delta<1/2$,
and let $\eta$ be a $1/\delta$-Lipschitz function such that $\eta=1$ on 
$\lambda(1-\delta)B$ and $\eta=0$ on $X\setminus\lambda B$, with $0\le \eta\le 1$ on $X$.  We then have that 
$\{\eta\rho_k+\delta^{-1}|u|\,\chi_{\lambda B\setminus\lambda(1-\delta)B}\}_k$ is an AM-upper bound for $\eta u$ in $X$,
see for example Lemma~\ref{lem:Leibnitz} above.
Thus, by the AM--Poincar\'e inequality and by the doubling property of $\mu$, we have that 
\begin{align*}
\vint{(1-\delta)B}\vint{(1-\delta)B}\|u(y)-u(x)&\|\, d\mu(y)\, d\mu(x)\le 2\vint{(1-\delta)B}\|u-u_{(1-\delta)B}\|d\mu\\
&\le C(1-\delta)\rad(B)\liminf_{k\to\infty}\vint{\lambda(1-\delta)B}\left(\eta\rho_k
  +\frac{|u|}{\delta}\chi_{\lambda B\setminus\lambda(1-\delta)B}\right)d\mu\\
	&\le C\rad(B)\liminf_{k\to\infty}\vint{\lambda B}\rho_kd\mu\\
	&<C\rad(B)\left(\frac{\|D_{AM}u\|(\lambda B)+\eps}{\mu(\lambda B)}\right).
\end{align*}
Now letting $\delta\to 0$ and taking $\eps\to 0,$ we have that 
\[
\vint{B}\|u-u_B\|d\mu\le C\rad(B)\frac{\|D_{AM}u\|(\lambda B)}{\mu(\lambda B)}. 
\]
Here $u_B$ is the Bochner integral average of $u$ over the ball $B$.
\end{proof} 

The goal of this section is to see that if $X$ supports a $1$-Poincaré inequality then it supports an AM--Poincar\'e 
inequality. For that we will use the fact that spaces supporting a $1$-Poincaré inequality have a Semmes 
Pencil of curves (see~\cite[Theorem~3.10]{DEKS}).

\begin{definition}\label{def:SemmesPencil}
We say that the metric measure space $(X,d,\mu)$ supports a Semmes pencil of curves if there exists $C>0$ 
so that for each $x,y\in X$ there exists a 
family  $\Gamma_{x,y}$ of non-constant compact rectifiable curves equipped with a probability measure 
$\sigma_{x,y}$ such that each $\gamma\in\Gamma_{x,y}$ connects $x$ to $y$, $\ell (\gamma )\leq Cd(x,y)$, and for each 
Borel set $A\subset X$ the map $\gamma\mapsto \ell(\gamma\cap A)$ is $\sigma_{x,y}$-measurable with
\[
\int_{\Gamma_{x,y}} \ell (\gamma\cap A) d\sigma_{x,y} (\gamma )\leq C\int_{A\cap CB_{x,y}}R_{x,y}(z)d\mu (z),
\]
where $CB_{x,y}:=B(x,Cd(x,y))\cup B(y,Cd(x,y))$ and
\[
R_{x,y}(z):=\frac{d(x,z)}{\mu (B(x,d(x,z))}+\frac{d(y,z)}{\mu (B(y,d(y,z))}.
\]
\end{definition}

\begin{theorem}\label{thm:semmesBVAM}
Suppose that $\mu$ is doubling, $X$ has a Semmes pencil of curves, and that for $\mu$-a.e.~$x\in X$
we have $\liminf_{r\to 0^+}\mu(B(x,r))/r=0$.
Then $X$ supports an AM--Poincar\'e inequality 
for every Banach space $V$.
\end{theorem}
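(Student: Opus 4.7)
The strategy is to use the Semmes pencil to obtain a pointwise bound $\Vert u(x)-u(y)\Vert \le \liminf_{i\to\infty}\int_\gamma\rho_i\, ds$ along $\sigma_{x,y}$-a.e.\ pencil curve $\gamma\in\Gamma_{x,y}$, and then integrate this bound against the pencil and over the ball $B$, combining the Semmes inequality of Definition~\ref{def:SemmesPencil} with a standard doubling estimate.

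First I would fix $u\in BV_{AM}(X:V)$ with AM--bounding sequence $(\rho_i)_{i\in\N}$ and invoke Lemma~\ref{rem:AllCurves} to pass, up to an arbitrarily small $\varepsilon$-error in energy, to a \emph{strong} bounding sequence; then \eqref{eq:AMBV} holds for every non-constant compact rectifiable curve $\gamma$ outside a parameter null set $N_\gamma$ with $\mathcal{H}^1(N_\gamma)=0$. For a ball $B=B(x_0,r)$ the triangle inequality gives
\[
\vint{B}\Vert u-u_B\Vert\, d\mu\le\vint{B}\vint{B}\Vert u(x)-u(y)\Vert\, d\mu(x)\, d\mu(y),
\]
so it suffices to estimate the double average on the right.

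The main obstacle is establishing the pointwise pencil bound: for $\mu\otimes\mu$-a.e.\ $(x,y)\in B\times B$ and $\sigma_{x,y}$-a.e.\ $\gamma\in\Gamma_{x,y}$, arclength-parametrized on $[0,\ell(\gamma)]$, I want $\Vert u(x)-u(y)\Vert \le \liminf_{i\to\infty}\int_\gamma\rho_i\, ds$. Although \eqref{eq:AMBV} applied to $\gamma|_{[\delta,\ell(\gamma)-\delta]}$ gives this whenever $\delta,\ell(\gamma)-\delta\notin N_\gamma$, the endpoints $0$ and $\ell(\gamma)$ themselves may lie in $N_\gamma$. To handle this I would take $\delta_k\downto 0$ in $[0,\ell(\gamma)]\setminus N_\gamma$ and pass to the limit: after modifying $u$ on a $\mu$-null set via Lemma~\ref{lem:null-mod}, the convergence $u(\gamma(\delta_k))\to u(x)$ holds at Lebesgue points of $u$, and the hypothesis $\liminf_{r\to 0^+}\mu(B(x,r))/r=0$ renders singletons $\mathcal{H}^{-1}$-negligible, so that the pencil curves meeting a prescribed $\mu$-null set form an AM-null family that can be discarded when selecting $(x,y)$ and $\gamma$.

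With the pointwise pencil bound in hand, Fatou's lemma gives $\Vert u(x)-u(y)\Vert\le\liminf_{i\to\infty}\int_{\Gamma_{x,y}}\int_\gamma\rho_i\, ds\, d\sigma_{x,y}(\gamma)$, and the Semmes inequality of Definition~\ref{def:SemmesPencil}, extended from characteristic functions to Borel functions $\rho_i$ by monotone approximation, yields
\[
\int_{\Gamma_{x,y}}\int_\gamma\rho_i\, ds\, d\sigma_{x,y}(\gamma)\le C\int_{CB_{x,y}}\rho_i(z)R_{x,y}(z)\, d\mu(z).
\]
Integrating over $(x,y)\in B\times B$, swapping the order of integration, and invoking the standard doubling estimate
\[
\int_B\int_B\chi_{CB_{x,y}}(z)R_{x,y}(z)\, d\mu(x)\, d\mu(y)\le C'\, r\, \mu(B)\, \chi_{\lambda B}(z)
\]
(proved by a dyadic annular decomposition around $z$), then dividing by $\mu(B)^2$ and sending $\varepsilon\to 0$, produces $\vint{B}\Vert u-u_B\Vert\, d\mu\le C\, r\liminf_{i\to\infty}\vint{\lambda B}\rho_i\, d\mu$, which is the AM--Poincar\'e inequality.
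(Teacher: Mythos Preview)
Your overall architecture matches the paper's: pass to a strong bounding sequence via Lemma~\ref{rem:AllCurves}, establish a pointwise bound along pencil curves, then integrate using the Semmes inequality and a dyadic annular estimate for $\int_B\int_B R_{x,y}(z)\,d\mu(x)\,d\mu(y)$. The gap is in the step you flag as the ``main obstacle'': the claim that $u(\gamma(\delta_k))\to u(x)$ for $\delta_k\downarrow 0$ with $\delta_k\notin N_\gamma$ is not justified, and the justification you sketch does not work. The Lebesgue point property of $x$ concerns integral averages $\vint{B(x,r)}\Vert u-u(x)\Vert\,d\mu\to 0$; it says nothing about the pointwise behaviour of $u$ along a single curve approaching $x$, even after modifying $u$ on a $\mu$-null set. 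And curves through a prescribed $\mu$-null set at a single point do not form an AM-null (or $\sigma_{x,y}$-null) family; every curve in $\Gamma_{x,y}$ passes through the null set $\{x,y\}$, so there is no useful exceptional family to discard here. The hypothesis $\liminf_{r\to 0^+}\mu(B(x,r))/r=0$ does not enter in the way you suggest.

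The paper does not attempt to reach the endpoints. Instead, for $x,y$ Lebesgue points it introduces the sets $E_\varepsilon(x)=\{z:\Vert u(z)-u(x)\Vert>\varepsilon\}$, chooses radii $r_i\downarrow 0$ with $\mu(B(x,r_i)\cap E_\varepsilon(x))/\mu(B(x,r_i))<2^{-i}$ and $\mu(B(x,r_i))/r_i\le 2^{-i}$ (this is where the hypothesis is actually used), and shows via the Semmes inequality that the set of curves $\gamma\in\Gamma_{x,y}$ whose trace in the annulus $B(x,r_i)\setminus B(x,r_i/2)$ lies $\mathcal{H}^1$-a.e.\ inside $E_\varepsilon(x)$ has $\sigma_{x,y}$-measure $\lesssim 2^{-i}$. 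A Borel--Cantelli argument then gives that $\sigma_{x,y}$-a.e.\ $\gamma$ meets the complement of $E_\varepsilon(x)$ on a set of positive length arbitrarily close to $x$, so one can choose $\hat x\in\gamma\setminus(E_\varepsilon(x)\cup N_\gamma)$ with $\Vert u(\hat x)-u(x)\Vert\le\varepsilon$, and similarly $\hat y$. This yields $\Vert u(x)-u(y)\Vert\le\liminf_i\int_\gamma\rho_i\,ds+2\varepsilon$ rather than the exact bound you wrote; the $2\varepsilon$ is removed at the end after the Semmes and doubling integrations. Replacing your limiting argument by this $\varepsilon$-approximate choice of interior points $\hat x,\hat y$ is what is missing from your proposal.
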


The proof follows along the lines of~\cite[Proposition~3.9]{DEKS}, but as we are now dealing with a vector-valued map,
we provide the complete proof here, especially since there seems to be a gap in the details of the proof in~\cite{DEKS} 
which we fixed here. In doing so, we saw that we needed the additional condition that $\liminf_{r\to 0^+}\tfrac{\mu(B(x,r))}{r}=0$
for almost every $x\in X$. This condition fails for example when $X=\R$, but in spaces that are not one-dimensional in nature
this is automatically satisfied via the upper mass bound estimates for the doubling measure $\mu$ on the connected space $X$,
when the upper mass bound exponent can be taken to be larger than $1$. When $X=\R^2$ is equipped with the
measure $d\mu(x)=|x|^{-1}d\mathcal{L}^2$, the point $x=0$ fails the condition $\liminf_{r\to 0^+}\tfrac{\mu(B(x,r))}{r}=0$, but
this condition holds at all other $x\in \R^2$.

\begin{proof}
Fix a Banach space $V$ and let $u\in BV_{AM}(X:V)$.  Let $B$ be a ball in $X$ and
$(\rho_k)_{k=1}^\infty$ be a strong bounding sequence 
for $u$ in $4C B$ with $C$ the constant associated with the Semmes pencil of curves, that is,
condition~\eqref{eq:AMBV} holds for all curves in $4C B$. By the Lebesgue differentiation theorem 
for vector-valued functions (see for instance~\cite[page~77]{HKSTbook}),
we know that 
$\mu (N)=0$, where $N$ is the set of points $x\in X$ for which either $\liminf_{r\to 0^+}\mu(B(x,r))/r>0$ or
\[
\limsup_{r\to 0^+}\vint{B(x,r)}\Vert u(x)-u(z)\Vert d\mu (z)>0.
\]
Let $x,y\in X\setminus N$ be two distinct points, and for each $\varepsilon >0$ consider the sets
\[
E_\varepsilon (x):=\{ z\in X:\Vert u(x)-u(z)\Vert >\varepsilon \},\quad E_\varepsilon (y):=\{ z\in X:\Vert u(y)-u(z)\Vert >\varepsilon \}.
\]
Now, since $x$ and $y$ are Lebesgue points of $u$, we know that
\[
\limsup_{r\to 0}\frac{\mu (B(x,r)\cap E_\varepsilon (x))}{\mu (B(x,r))}=0
\quad\text{and}\quad \limsup_{r\to 0}\frac{\mu (B(y,r)\cap E_\varepsilon (y))}{\mu (B(y,r))}=0.
\]
Let $\{ r_i\}_i$ be a decreasing sequence of radii so that $r_1\leq \frac{1}{4}d(x,y)$, $r_{i+1}\leq \frac{1}{4}r_i$, 
$\frac{\mu (B(x,r_i))}{r_i}\leq 2^{-i}$, $\frac{\mu (B(y,r_i))}{r_i}\leq 2^{-i}$, and in addition
\[
\frac{\mu (B(x,r_i)\cap E_\varepsilon (x))}{\mu (B(x,r_i))}<2^{-i}
\quad\text{and}\quad \frac{\mu (B(y,r_i)\cap E_\varepsilon (y))}{\mu (B(y,r_i))}<2^{-i}.
\]
For each $i$ let $\mathrm{R}_i(x):=B(x,r_i)\backslash B(x,r_i/2)$ and denote by $\Gamma_i (x)$ the collection 
of all curves $\gamma\in\Gamma_{x,y}$ such that
$$\mathcal{H}^1\big( \gamma^{-1}(\mathrm{R}_i(x)\backslash E_\varepsilon (x))\big) =0$$
and define $\Gamma_i (y)$ analogously, replacing $x$ by $y$. 
Now use the fact that $\mu$ is doubling and $\Gamma_{xy}$ 
is a Semmes family of curves to obtain
\begin{eqnarray*}
\frac{r_i}{2}\sigma_{x,y}(\Gamma _i(x))&\le&\int_{\Gamma_{xy}}\ell (\gamma\cap\mathrm{R}_i(x)\cap E_\varepsilon (x))\sigma_{xy}(\gamma )\\
&\le & \int_{CB_{x,y}\cap E_\varepsilon (x)\cap\mathrm{R}_i(x)}R_{x,y}(z)d\mu (z) \\
&\leq & \int_{CB_{x,y}\cap E_\varepsilon (x)\cap \mathrm{R}_i(x)}\left(\frac{d(x,z)}{\mu (B(x,d(x,z)))}+\frac{d(y,z)}{\mu (B(y,d(y,z)))}\right)d\mu (z)\\
&\leq &\int_{CB_{x,y}\cap E_\varepsilon (x)\cap \mathrm{R}_i(x)}\left(\frac{r_i}{\mu (B(x,r_i/2))}+\frac{2C_dd(x,y)}{\mu (B(x,d(x,y)/2))}\right)d\mu (z)\\
&\leq &\frac{r_i}{\mu (B(x,r_i/2))}\mu (E_\varepsilon (x)\cap B(x,r_i))+\frac{2C_dd(x,y)}{\mu (B(x,d(x,y)/2))}\mu (B(x,r_i))\\
&\leq& r_iC_d2^{-i}+\frac{2C_dd(x,y)}{\mu (B(x,d(x,y)/2))}\mu (B(x,r_i)). 
\end{eqnarray*}
We note that the above estimate also fills in the gap found in the proof for the real-valued case 
in~\cite[page~243]{DEKS}.
Set $C_{x,y}:=C_d+\tfrac{2C_dd(x,y)}{\mu (B(x,d(x,y)/2))}$.
Then from the above argument we see that 
\[
\sigma_{x,y}(\Gamma_i (x))\leq 2C_d\,2^{-i}+\frac{4C_dd(x,y)}{\mu (B(x,d(x,y)/2))}\frac{\mu( B(x,r_i))}{r_i}\leq 2C_{x,y}\,2^{-i}. 
\]
Then for each positive integer $n$ we have
\[
\sigma_{x,y}\left( \bigcup_{i=n}^\infty \Gamma_i(x)\right)\le C_{x,y}\, 2^{1-n}.
\]
Define
\[
\Gamma (x):=\bigcap_{n\in\N}\bigcup_{i=n}^\infty \Gamma_i(x).
\]
It follows that $\sigma_{x,y}(\Gamma (x))=0$. 
When
$\gamma\in\Gamma_{x,y}\backslash \Gamma (x)$, there exists a positive integer $n_0$ 
such that $\gamma\notin \Gamma_i(x)$ for every $i\geq n_0$. 
It suffices to have $\gamma\notin\Gamma_i(x)$ for some $i\ge n_0$ to get that there exists 
$\hat{x}\in \gamma\backslash (E_\varepsilon (x)\cup N_\gamma )$ for any $\mathcal{H}^1$-null set $N_\gamma$ in $\gamma$. 
Now consider the same argument replacing $x$ by $y$ in order to construct $\Gamma (y)$. Note that
$\sigma_{x,y}(\Gamma(x)\cup\Gamma(y))=0$.

Recall that condition \eqref{eq:AMBV} holds for every nonconstant, compact, rectifiable curve.  Therefore, for every curve 
$\gamma\in \Gamma_{xy}\backslash (\Gamma (x)\cup \Gamma (y))$ there exists an $\mathcal{H}^1$-null set $N_\gamma$ such that
\[
\Vert u(\gamma (s))-u(\gamma (t))\Vert\leq\liminf_{k\to\infty}\int_{\gamma}\rho_k\, ds
\]
whenever $s,t\in\mathrm{dom}(\gamma )\backslash\gamma^{-1}(N_\gamma )$. Since 
$\gamma\notin \Gamma (x)\cup\Gamma (y)$, there exist 
$\hat{x}\in \gamma\backslash (E_\varepsilon (x)\cup N_\gamma )$ and 
$\hat{y}\in \gamma\backslash (E_\varepsilon (y)\cup N_\gamma )$ such that
\begin{equation}\label{eq:SemmesUB}
\Vert u(x)-u(y)\Vert \leq\Vert u(\hat{x})-u(\hat{y})\Vert+2\eps\leq \liminf_{k\to\infty}\int_\gamma \rho_k ds+2\eps.
\end{equation}
(Notice that we can actually get not only such $\hat{x}$ and $\hat{y}$ but two sequences of points 
$x_i\notin E_\varepsilon (x)\cup N_\gamma$ and $y_i\notin E_\varepsilon (y)\cup N_\gamma$ 
converging to $x$ and $y$ respectively, but we do not need that here).
By the Semmes family inequality, we have
\[
\int_{\Gamma_{xy}}\int_\gamma \rho_k \, ds\, d\sigma_{x,y} (\gamma )\leq C\int_{CB_{x,y}}\rho_k(z) R_{xy}(z)d\mu (z)
\]
for each $x,y\in X\setminus N$, $k\in\N$. Therefore, by $\sigma_{x,y}(\Gamma (x)\cup \Gamma (y))=0$ and 
by~\eqref{eq:SemmesUB}, we see that
\[ 
 \Vert u(x)-u(y)\Vert=\int_{\Gamma_{x,y}}\Vert u(x)-u(y)\Vert d\sigma_{x,y} (\gamma)
\le   
C\, \liminf_{k\to\infty}\int_{CB_{x,y}}\rho_k(z) R_{xy}(z)d\mu (z)+2\eps.
\] 
Recall that $\mu(N)=0$. Now, for each ball $B\subset X$,
\begin{align}\label{eq:Semmes-Max1}
\vint{B}\Vert u-u_B\Vert d\mu&\leq\vint{B}\vint{B}\Vert u(x)-u(y)\Vert d\mu (y)d\mu (x)\notag\\
&\leq C\, \vint{B}\vint{B}\liminf_{k\to\infty}\int_{CB_{x,y}}\rho_k(z) R_{x,y}(z)d\mu (z)d\mu(y)d\mu(x)+2\eps\notag\\
&\leq \frac{C}{\mu(B)^2}\, \liminf_{k\to\infty}\int_{B}\int_{B}\int_{4CB}\rho_k(z) R_{x,y}(z)d\mu (z)d\mu(y)d\mu(x)+2\eps\notag\\
&= \frac{C}{\mu(B)^2}\,\liminf_{k\to\infty}\int_{4CB}\rho_k(z)\int_{B}\int_{B}R_{x,y}(z)d\mu(y)d\mu(x)d\mu (z)+2\eps,
\end{align}
where we have used Tonelli's theorem in the last equality.  

Now, to obtain an estimate for the inner two integrals above, we fix $z\in 4CB$.
Let $R:=\rad(B)$. By the doubling property of $\mu$, we have 
with $B_i=B(z,5C\, 2^{-i}R)$ for $i=0,1,\cdots$,
\begin{align*}
\int_B\int_B \frac{d(x,z)}{\mu(B(x,d(x,z)))}\, d\mu(y)\, d\mu(x)&=\mu(B)\, \int_B\frac{d(x,z)}{\mu(B(x,d(x,z)))}\, d\mu(x)\\
&\le \mu(B)\, \int_{B(z,5CR)} \frac{d(x,z)}{\mu(B(x,d(x,z)))}\, d\mu(x)\\
&\lesssim\, \mu(B)\, \sum_{i=0}^\infty\int_{B_i\setminus B_{i+1}} \frac{2^{-i}R}{\mu(B(z,2^{-i}R))}\, d\mu(x)\\
&\lesssim \mu(B)\, \sum_{i=0}^\infty 2^{-i}R\\
&\lesssim \mu(B)\, R,
\end{align*}
where we have implicitly used the fact that $\mu(\{w\})=0$ for each $w\in X$. The comparison constants above depend solely on
the doubling constant of $\mu$ and the constant $C$.
A similar estimate also gives
\[
\int_B\int_B \frac{d(y,z)}{\mu(B(y,d(y,z)))}\, d\mu(y)\, d\mu(x)\lesssim\mu(B)\, R.
\]
Now from~\eqref{eq:Semmes-Max1} we see that
\[
\vint{B}\Vert u-u_B\Vert\, d\mu\lesssim \frac{C\, \rad(B)}{\mu(B)}\, \liminf_{k\to\infty}\int_{4CB}\rho_k(z)\, d\mu(z)\, +\, 2\eps.
\]
Taking $\eps\to 0,$ we have that 
\[
\vint{B}\| u-u_B\|d\mu\le C\rad(B)\liminf_{k\to\infty}\vint{4CB}\rho_kd\mu.
\]

Now, let $\{\rho_k\}_{k=1}^\infty$ be an AM-upper bound for $u.$  That is, \eqref{eq:AMBV} holds for AM-a.e.\ curve.  
Then by Remark~\ref{rem:AllCurves}, we know that
there exists a sequence of nonnegative Borel functions $\{g_k\}_{k=1}^\infty$ with 
$\limsup_{k\to\infty} \int_X g_k\, d\mu<\infty$
such that for all $\eps>0,$ $\{\rho_k+\eps g_k\}_{k=1}^\infty$ is a strong bounding sequence for $u$, that is, 
where~\eqref{eq:AMBV} holds for all curves. Applying the above result, we obtain
\begin{align*}
\vint{B}\| u-u_B\|d\mu&\le C\rad(B)\liminf_{k\to\infty}\vint{4CB}(\rho_k+\eps g_k)d\mu\\
	&\le C\rad(B)\left(\liminf_{k\to\infty}\vint{4CB}\rho_kd\mu+\eps\limsup_{k\to\infty}\vint{4CB}g_kd\mu\right)\\
	&\le C\rad(B)\left(\liminf_{k\to\infty}\vint{4CB}\rho_kd\mu+\frac{\eps}{\mu(4CB)}\limsup_{k\to\infty}\int_X g_kd\mu\right).
\end{align*}
Taking $\eps\to 0^+$ yields the desired result.
\end{proof}

\begin{corollary}\label{cor:Poincare}
The following are equivalent whenever $(X,d,\mu )$ is a metric measure space with $\mu$ a doubling measure.
\begin{itemize}
\item[$(i)$] $X$ supports an AM--Poincar\'e inequality for every Banach space $V$ target.
\item[$(ii)$] $X$ supportas an AM--Poincar\'e inequality for some Banach space $V$ target.
\item[$(iii)$] $X$ supports an AM--Poincar\'e inequality for real-valued functions.
\item[$(iv)$] $X$ supports a Semmes pencil of curves.
\item[$(v)$] $X$ supports a $1$-Poincar\'e inequality.
\end{itemize}
\end{corollary}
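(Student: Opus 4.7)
The plan is to establish the cycle $(i)\Rightarrow(ii)\Rightarrow(iii)\Rightarrow(v)\Rightarrow(iv)\Rightarrow(i)$. Two of the five links are already in hand: $(v)\Rightarrow(iv)$ is \cite[Theorem~3.10]{DEKS}, and $(iv)\Rightarrow(i)$ is precisely Theorem~\ref{thm:semmesBVAM}. The implication $(i)\Rightarrow(ii)$ is trivial, since the family of Banach-space targets in $(i)$ contains the specific target appearing in $(ii)$.

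For $(ii)\Rightarrow(iii)$ I would fix a unit vector $v_0$ in the distinguished Banach space $V$ and, given a real-valued $u\in BV_{AM}(X)$ with AM-bounding sequence $(\rho_i)_{i\in\N}$, pass to the $V$-valued lift $\widetilde u := u\, v_0$. Because $t\mapsto t v_0$ is an isometric embedding $\R\hookrightarrow V$, the sequence $(\rho_i)_{i\in\N}$ remains an AM-bounding sequence for $\widetilde u$ and $\widetilde u_B = u_B\, v_0$. The $V$-valued AM--Poincar\'e inequality applied to $\widetilde u$ then specializes to the $\R$-valued AM--Poincar\'e inequality for $u$, since $\|\widetilde u - \widetilde u_B\|_V = |u - u_B|$.

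The substantive bridge is $(iii)\Rightarrow(v)$. The key observation is that if $g$ is an upper gradient of a real-valued function $u$ in the classical sense, then the constant sequence $(g,g,g,\ldots)$ is a \emph{strong} AM-bounding sequence for $u$: the upper gradient inequality holds for every non-constant compact rectifiable curve, so the exceptional family is empty and each $N_\gamma$ may be taken empty. To feed this observation into the AM--Poincar\'e inequality of $(iii)$, which is formulated for integrable functions, I would fix a ball $B$, cut $u$ off by a Lipschitz function $\eta$ equal to $1$ on $\lambda B$ and supported in $2\lambda B$, and invoke Lemma~\ref{lem:Leibnitz} to produce an AM-bounding sequence for $\eta u$ dominated by $\eta g + L\,|u|\,\chi_{2\lambda B\setminus\lambda B}$. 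Because $\eta\equiv 1$ on $\lambda B$ and the annular correction vanishes there, the AM--Poincar\'e inequality for $\eta u$ restricted to $B$ reduces to $\vint{B}|u - u_B|\,d\mu \leq C\,\rad(B)\vint{\lambda B}g\,d\mu$, which is the $1$-Poincar\'e inequality for the pair $(u,g)$.

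The step I expect to demand the most delicate book-keeping is the Leibniz-and-cutoff argument in $(iii)\Rightarrow(v)$: one must confirm that the correction term generated by Lemma~\ref{lem:Leibnitz} genuinely drops out after averaging over $\lambda B$, and that replacing $u$ by $\eta u$ leaves the left-hand side average unchanged on $B$. A secondary technical caveat is that Theorem~\ref{thm:semmesBVAM} is proved under the auxiliary density hypothesis $\liminf_{r\to 0^+}\mu(B(x,r))/r = 0$ for $\mu$-a.e.\ $x\in X$; in the setting of the corollary this is automatic whenever doubling combines with the arc-connectivity implicit in $(iv)$ to produce a non-trivial upper mass-bound exponent, and the essentially one-dimensional edge cases can be handled directly from the $1$-Poincar\'e inequality together with Lemma~\ref{lem:BVinBVAM}.
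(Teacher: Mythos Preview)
Your cycle $(i)\Rightarrow(ii)\Rightarrow(iii)\Rightarrow(v)\Rightarrow(iv)\Rightarrow(i)$ is correct and close to the paper's argument, which instead runs $(i)\Rightarrow(ii)\Rightarrow(v)$ directly and then quotes \cite[Theorem~3.10]{DEKS} for the full equivalence $(iii)\Leftrightarrow(iv)\Leftrightarrow(v)$, closing with $(iv)\Rightarrow(i)$ via Theorem~\ref{thm:semmesBVAM}. The paper's $(ii)\Rightarrow(v)$ is just your $(ii)\Rightarrow(iii)$ and $(iii)\Rightarrow(v)$ compressed into one line: it observes that $\R$ embeds isometrically into $V$ and that an upper gradient yields a constant AM-bounding sequence, without elaborating further.

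Your Leibniz-and-cutoff argument for $(iii)\Rightarrow(v)$ is more careful than what the paper writes, and it genuinely handles a point the paper glosses over: the AM--Poincar\'e inequality is stated for $u\in BV_{AM}(X)$, hence $u\in L^1(X)$, while the $1$-Poincar\'e inequality is for $u,g\in L^1_{\loc}(X)$. Your cutoff localizes correctly, and your check that the annular correction $L|u|\chi_{2\lambda B\setminus\lambda B}$ vanishes on $\lambda B$ is exactly what is needed. The paper avoids this bookkeeping by appealing to \cite{DEKS} for $(iii)\Leftrightarrow(v)$ outright rather than proving $(iii)\Rightarrow(v)$ by hand.

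Your caveat about the density hypothesis $\liminf_{r\to 0^+}\mu(B(x,r))/r=0$ in Theorem~\ref{thm:semmesBVAM} is well spotted: the corollary as stated does not include it, and the paper's proof of the corollary does not address this either. Your proposed workaround via Lemma~\ref{lem:BVinBVAM} for one-dimensional edge cases is not fully convincing as written (that lemma gives $BV\subset BV_{AM}$, which does not by itself yield an AM--Poincar\'e inequality), but you are in no worse shape than the paper on this point.
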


\begin{proof}
$(i)\Rightarrow (ii)$ is immediate. If $(ii)$ holds, then in particular $X$ supports a 
$1$-Poincaré inequality for a Banach space-valued functions, and hence
supports a $1$-Poincar\'e inequality for real-valued functions, as $\R$ can be isometrically embedded
into that Banach space, that is, $(ii)\Rightarrow (v)$.
From~\cite[Theorem~3.10]{DEKS} we know that $(v)$, $(iv)$, and $(iii)$ are equivalent.
Finally, $(iv)\Rightarrow (i)$ 
follows from Theorem~\ref{thm:semmesBVAM}.
\end{proof}

The condition $\liminf_{r\to 0^+}\mu(B(x,r))/r=0$ for $\mu$-a.e.~$x\in X$ precludes us from considering spaces that
have components that are one-dimensional in nature, as for example in $\R$ and graphs. It is perhaps possible to handle 
this situation separately, as was shown for real-valued BV functions in~\cite{LZ}. We do not do so here.

\section{Proof of theorem \ref{thm:main}}\label{Sec:4}

The focus of this section is to complete the proof of the first main theorem of the paper, Theorem~\ref{thm:main}.
By Lemma~\ref{lem:BVinBVAM} we have seen that $BV(X:V)\subset BV_{AM}(X:V)$ with the energy seminorm control
$\Vert D_\mathrm{AM}u\Vert (X)\leq\Vert Du\Vert (X)$. Thus it only remains to show the reverse inequalities.
To this end, let $u\in BV_{AM}(X:V)$. We will make use of the version of Poincar\'e inequality identified in
Lemma~\ref{eq:PI-BV-AM} above.

Since the measure $\mu$ is doubling, for each $\eps>0$ there is a countable covering $\{B_i\}_i$ of $X$ by balls of radius $\eps$
such that for each $T\ge1$ there is a constant $C_T>0$, depending solely on $T$ and the doubling constant associated
with $\mu$, such that $\sum_i\chi_{TB_i}\le C_T$ on $X$. Moreover, for each $i$ there is a non-negative $C/\eps$-Lipschitz
function $\pip_i$, with support in $2B_i$, so that $\sum_i\pip_i=1$ on $X$; see for example the discussion
at the beginning of~\cite[Section~9.2]{HKSTbook}. Such a collection of functions $\{\pip_i\}_i$ is called a Lipschitz
partition of unity in $X$. Using this Lipschitz partition of unity, we now construct a locally Lipschitz continuous approximation
of $u$ as follows:
\[
u_\eps:=\sum_i u_{B_i}\, \pip_i,\qquad \text{ where }\qquad u_{B_i}:=\vint{B_i}u\, d\mu.
\]
Let $x\in X$ and fix an index $j$ such that $x\in B_j$. Then it follows that whenever $\pip_i(x)\ne 0$, necessarily $x\in 2B_i$ and
so $2B_i\cap B_j$ is non-empty; in this case, $2B_i\subset 5B_j$. Hence, using also the fact that
$u(x)=\sum_iu(x)\pip_i(x)$, we obtain
\begin{align*}
u_\eps(x)-u(x)=\sum_i[u_{B_i}-u(x)]\pip_i(x)
&=\sum_{i; 2B_i\cap B_j\ne\emptyset}\, [u_{B_i}-u(x)]\pip_i(x)\\
&=\sum_{i; 2B_i\cap B_j\ne\emptyset}\, \pip_i(x)\, \vint{B_i}[u-u(x)]\, d\mu.
\end{align*}
Thus by the doubling property of $\mu$ and the bounded overlap property of the balls $\{5B_j\}_j$, we obtain
\begin{align*}
\|u_\eps(x)-u(x)\|&\le \sum_{i; 2B_i\cap B_j\ne\emptyset}\, \vint{B_i}\|u-u(x)\|\, d\mu\\
 &\le \sum_{i; 2B_i\cap B_j\ne\emptyset}\, \vint{B_i}[\|u-u_{5B_j}\|\, d\mu+\|u_{5B_j}-u(x)\|\\
 &\lesssim \vint{5B_j}\|u-u_{5B_j}\|\, d\mu+\|u_{5B_j}-u(x)\|,
\end{align*}
and integrating over $B_j$ and summing up over $j$, and using the fact that $\{B_j\}_j$ is a cover of $X$, we obtain
\begin{align*}
\int_X\|u_\eps(x)-u(x)\|\, d\mu(x)&\le \sum_j\int_{B_j}\|u_\eps(x)-u(x)\|\, d\mu(x)\\
&\lesssim \sum_j\int_{B_j} \left(\vint{5B_j}\|u-u_{5B_j}\|\, d\mu+\|u_{5B_j}-u(x)\|\right)\, d\mu(x)\\
&\lesssim\sum_j\left(\mu(B_j)\, \vint{5B_j}\|u-u_{5B_j}\|\, d\mu+\mu(B_j)\, \vint{5B_j}\|u_{5B_j}-u(x)\|\, d\mu(x)\right)\\
&\lesssim\sum_j\mu(B_j)\, \vint{5B_j}\|u-u_{5B_j}\|\, d\mu\\
&\lesssim\sum_j \eps\, \Vert D_{AM}u\Vert(5\lambda B_j)\\
&\lesssim \eps\, \Vert D_{AM}u\Vert(X).
\end{align*}
In obtaining the penultimate inequality above, we used the AM-Poincar\'e inequality, and in obtaining the last inequality
above, we relied on the bounded overlap of the collection $\{5\lambda B_j\}_j$. Thus $u_\eps\to u$ in $L^1(X:V)$
as $\eps\to 0^+$. As $u_\eps$ is locally Lipschitz continuous (as we will show next) on the separable metric space $X$,
it follows that $u_\eps$ is Bochner measurable, and so the convergence holds in $L^1(X:V)$.

To show that $u_\eps\in N^{1,1}(X:V)$, it suffices to show that $u_\eps$ is locally Lipschitz continuous on $X$ with its
local Lipschitz constant function $\Lip u_\eps\in L^1(X)$. Here,
\[
\Lip u_\eps(x):=\limsup_{y\to x}\frac{\|u(y)-u(x)\|}{d(x,y)}.
\]
To do so, we fix $x\in X$ and choose an index $j$ such that $x\in B_j$. Considering $y\in B_j$ as well, we see that
\[
u_\eps(y)-u_\eps(x)=\sum_{i; 2B_i\cap B_j\ne\emptyset}\, u_{B_i}\, \left(\pip_i(x)-\pip_i(y)\right)
=\sum_{i; 2B_i\cap B_j\ne\emptyset}\, \left(u_{B_i}-u_{5B_j}\right)\, \left(\pip_i(x)-\pip_i(y)\right).
\]
Using the Lipschitz property of the functions $\pip_i$, we now see by the Poincar\'e inequality that
\begin{align*}
\|u_\eps(y)-u_\eps(x)\|\lesssim\frac{d(x,y)}{\eps}\, \sum_{i; 2B_i\cap B_j\ne\emptyset}\, \|u_{B_i}-u_{5B_j}\|
&\lesssim\frac{d(x,y)}{\eps}\, \sum_{i; 2B_i\cap B_j\ne\emptyset}\, \vint{B_i}\|u-u_{5B_j}\|\, d\mu\\
&\lesssim\frac{d(x,y)}{\eps}\, \vint{5B_j}\|u-u_{5B_j}\|\, d\mu\\
&\lesssim d(x,y)\, \frac{\Vert D_{AM}u\Vert(5\lambda B_j)}{\mu(B_j)}.
\end{align*}
It follows that
\[
\Lip u_\eps(x)\lesssim \inf_{j\, :x\in B_j}\, \frac{\Vert D_{AM}u\Vert(5\lambda B_j)}{\mu(B_j)}.
\]
Thus $u_\eps$ is locally Lipschtiz continuous on $X$, and it only remains to show that $\Lip u_\eps\in L^1(X)$. 
Using the fact that $\{B_j\}_j$ covers $X$, we see that
\[
\int_X\Lip u_\eps\, d\mu\lesssim\sum_j\int_{B_j} \frac{\Vert D_{AM}u\Vert(5\lambda B_j)}{\mu(B_j)}\, d\mu
 =\sum_j \Vert D_{AM}u\Vert(5\lambda B_j)\lesssim\Vert D_{AM}u\Vert(X)<\infty,
\]
and this completes the proof that $u_\eps\in L^1(X)$. As $u_\eps\to u$ in $L^1(X:V)$ and as
$\sup_\eps\, \int_X\Lip u_\eps\, d\mu\lesssim \Vert D_{AM}u\Vert(X)<\infty$, it follows that 
$u\in BV(X:V)$ with $\Vert Du\Vert(X)\lesssim \Vert D_{AM}u\Vert(X)$, completing the proof of 
Theorem~\ref{thm:main}. Note that the comparison constant in the above inequality depends solely on
the doubling constant of the measure $\mu$ and the constants from the Poincar\'e inequality.

\section{Approximate continuity and jump sets; proof of Theorem~\ref{thm:main2}}\label{sec:JumpSets}

Throughout this section, in addition to the measure $\mu$ being doubling and supporting a
$1$-Poincar\'e inequality, we will also assume that $X$ is complete.
In this section we consider the regularity properties of functions in the class $BV_{AM}(X:Y)$, 
with $(Y,d_Y)$ a proper metric space (that is,
closed and bounded subsets of $Y$ are compact). As seen from the examples in Subsection~\ref{Sec:MetricBV}, 
when $Y$ is not a Banach space, it is more natural to consider the class $BV_{AM}(X:Y)$ rather than $BV(X:Y)$.

For functions $u$ in the class $L^1(X:Y)$, by isometrically embedding $Y$ into a Banach space if necessary, we know that
for $\mu$-almost every $x\in X$, we have the Lebesgue point property hold:
\[
\limsup_{r\to 0^+}\vint{B(x,r)}d_Y(u(y),u(x))\, d\mu(y)=0,
\]
and we refer the interested reader for more on this topic to~\cite[Page~77]{HKSTbook}. At such points $x$, as in the proof of
Theorem~\ref{thm:semmesBVAM}, if we set $E_\eps(x):=\{y\in X\, :\, d_Y(u(y),u(x))>\eps\}$, then
we have that
\[
\limsup_{r\to 0^+}\frac{\mu(B(x,r)\cap E_\eps(x))}{\mu(B(x,r))}=0.
\]

\begin{definition}\label{def:AppCont}
We say that a point $x\in X$ is a point of approximate continuity of $u$ if for every $\eps>0$ we have
\[
\limsup_{r\to 0^+}\frac{\mu(\{y\in B(x,r)\, :\, d_Y(u(x),u(y))\ge \eps\})}{\mu(B(x,r))}=0.
\]
\end{definition}

The discussion from the previous paragraph tells us that $\mu$-almost every point in $X$ is a point of approximate
continuity of $u\in L^1(X:Y)$. For functions $u\in BV_{AM}(X:Y)$ we would like a better control. We may broaden
the definition of approximate continuity by saying that $u$ is approximately continuous at $x$ if there is some $y_0\in Y$
such that for every $\eps>0$ we have
\[
\limsup_{r\to 0^+}\frac{\mu(\{y\in B(x,r)\, :\, d_Y(y_0,u(y))\ge \eps\})}{\mu(B(x,r))}=0.
\]
Since $\mu$-almost every point in $X$ is a point of approximate continuity of $u$, if $x\in X$ such that there is some
$y_0$ satisfying the above density condition, then we can re-define $u$ at $x$ by setting $u(x):=y_0$; such a
modification is a better representative of $u$; moreover, such a modification needs to be done only on a set of
$\mu$-measure zero, thanks to the Lebesgue differentiation theorem mentioned above.

Note that if $x$ is a point of approximate continuity in the above sense with $y_0$ the corresponding value,
and if $Y$ is bounded, 
then for each $\eps>0$ we have 
\begin{align*}
\limsup_{r\to 0^+}\vint{B(x,r)}d_Y(u(z),y_0)\, d\mu(z)
\le \eps+\diam(Y)\, \limsup_{r\to 0^+}\frac{\mu(\{y\in B(x,r)\, :\, d_Y(y_0,u(y))\ge \eps\})}{\mu(B(x,r))}
=\eps,
\end{align*}
and so necessarily $x$ is a Lebesgue point of $u$ as well.

The notions of approximate continuity and jump values as considered 
in~\cite[page~294]{Ambrosio1990} is somewhat different than ours in that there it is required that for every
continuous function $g:Y\to\R$ the map $g\circ u$ is approximately continuous at $x$ with the approximate
limit being $g(y_0)$. Such an indirect definition seems to be not needed here, and we take the definition 
of approximate continuity proposed by Ambrosio in~\cite[Definition~1.1]{Ambrosio1990-II}.

Let us consider points $x\in X$ that are not points of approximate continuity in the above, more expanded, sense. 
We would like to call such points $x$ jump points of $u$. So the jump set $\mathcal{J}(u)$ 
of $u$ is the collection of all points in $X$
that are not points of approximate continuity, in the above broader sense, of $u$. We would like to know that 
for $x\in \mathcal{J}(u)$ there are finitely many points 
$y_1,y_2,\cdots, y_k$, with $k\le k_0$ where $k_0$ is independent of $u$ and $x$,
that act as jump values of $u$ at $x$. This may not be possible at all $x\in \mathcal{J}(u)$, but we would like to ensure that
this is possible for $\mathcal{H}^{-1}$-a.e.~$x\in \mathcal{J}(u)$.
An additional drawback in~\cite{Ambrosio1990}
is that the discussion regarding jump sets is incomplete; if we know that there is a set $F$, of positive density 
at $x\in \mathcal{J}(u)$,
for which $g\circ u$ takes on an approximate limit at $x$ along $F$ for \emph{each} continuous $g:Y\to\R$, then 
from~\cite[Proposition~1.1]{Ambrosio1990} we know that $u$ has a jump value at $x$. The existence of such $F$ is not
considered there. For these reasons we do not rely in the techniques and  
results of~\cite{Ambrosio1990} in our discussion, but work directly with the maps $u$ themselves.
The discussion in~\cite{Ambrosio1990-II} is set in the context of $X=\R^n$ and relies on the structure of sets of finite
perimeter there; unlike in the Euclidean setting, even real-valued BV function in more general metric spaces $X$ 
considered here may have more than two jump values. This construction of the set $\mathcal{J}(u)$ immediately
implies the validity of Claim~(a) of Theorem~\ref{thm:main2}.

We first make the simplifying reduction that $Y$ is a compact metric space. 
We refer the interested reader to the final section of the paper, the appendix, for the final step that 
allows us to extend the result to non-compact proper metric space $Y$.
Now, if $x\in \mathcal{J}(u)$, 
then for every $y\in Y$
there is some $\eps_y>0$ such that 
\[
\limsup_{r\to 0^+}\frac{\mu(\{z\in B(x,r)\, :\, d_Y(y, u(z))\ge \eps_y\})}{\mu(B(x,r))}>0.
\]
For each $y\in Y$ and $\eps>0$ set 
\begin{equation}\label{eq:Eyeps}
F(y,\eps):=\{z\in X\, :\, d_Y(u(z),y)\ge \eps\}.
\end{equation}
Note that then for every $0<\eps\le \eps_y$ we have that 
\[
\limsup_{r\to 0^+}\frac{\mu(B(x,r)\cap F(y,\eps))}{\mu(B(x,r))}>0.
\]
We fix $\eps>0$, and cover the compact set $Y$ by finitely many balls $B(y_i,\eps)$, $i=1,\cdots, N_\eps$. Note that as
$B(x,r)=\bigcup_{i=1}^{N_\eps}u^{-1}(B(y_i,\eps)$, necessarily there is some $y_1\in Y$, relabeled if necessary, such that
\[
\limsup_{r\to 0^+}\frac{\mu(B(x,r)\cap u^{-1}(B(y_1,\eps)))}{\mu(B(x,r))}>0.
\]
If we also have that 
\begin{equation}\label{eq:dense-compl-1}
\limsup_{r\to 0^+}\frac{\mu(B(x,r)\cap u^{-1}(Y\setminus B(y_1,3\eps)))}{\mu(B(x,r))}>0,
\end{equation}
then we have two sets $E_1,E_2\subset X$ with $E_1=u^{-1}(B(y_1,\eps))$ and $E_2=u^{-1}(B(w_1,\eps))$,
$d_Y(y_1,w_1)\ge 3\eps$, such that 
\begin{equation}\label{eq:dense-at-x}
\limsup_{r\to 0^+}\frac{\mu(B(x,r)\cap E_1)}{\mu(B(x,r))}>0,\text{ and }
\limsup_{r\to 0^+}\frac{\mu(B(x,r)\cap E_2)}{\mu(B(x,r))}>0.
\end{equation}
If~\eqref{eq:dense-compl-1} fails, then we know that 
\[
\lim_{r\to 0^+}\frac{\mu(B(x,r)\cap u^{-1}(Y\setminus B(y_1,3\eps)))}{\mu(B(x,r))}=0
\]
and so 
\[
\lim_{r\to 0^+}\frac{\mu(B(x,r)\cap u^{-1}(\overline{B}(y_1,3\eps)))}{\mu(B(x,r))}=1;
\]
In this case, we can cover the compact set $\overline{B}(y_1,3\eps)$ by balls of radii $\eps/6^2$, and obtain
a point $y_2\in \overline{B}(y_1,3\eps)$ so that
\[
\limsup_{r\to 0^+}\frac{\mu(B(x,r)\cap u^{-1}(B(y_2,\eps/6^2)))}{\mu(B(x,r))}>0.
\]
If we know that 
\[
\limsup_{r\to 0^+}\frac{\mu(B(x,r)\setminus u^{-1}(B(y_2,3\eps/6^2)))}{\mu(B(x,r))}>0,
\]
then we can set $E_1=u^{-1}(B(y_2,\eps/6^2))$ and $E_2=u^{-1}(B(w_2,\eps/6^2))$, with
$6\eps\ge d_Y(y_2,w_2)\ge 3\eps/6^2$ 
such that~\eqref{eq:dense-at-x} holds. If the above analog of~\eqref{eq:dense-compl-1}
fails, then we know that
\[
\lim_{r\to 0^+}\frac{\mu(B(x,r)\cap u^{-1}(\overline{B}(y_2,3\eps/6^2)))}{\mu(B(x,r))}=1,
\]
and the process inductively continues. Thus we obtain a sequence of points $y_1, y_2,\cdots$ with 
$d_Y(y_i,y_{i+1})\le 3\eps/6^i$ and so that
\[
\lim_{r\to 0^+}\frac{\mu(B(x,r)\cap u^{-1}(\overline{B}(y_i,3\eps/6^i)))}{\mu(B(x,r))}=1.
\]
If this process continues ad infinitum, then we obtain a Cauchy sequence $\{y_i\}_i$ in $Y$ which,
by the completeness of $Y$, must converge to a point $y_\infty$ for which we would have that
for each $\tau>0$,
\[
\limsup_{r\to 0^+}\frac{\mu(B(x,r)\cap u^{-1}(B(y_\infty,\tau)))}{\mu(B(x,r))}=1,
\]
and so re-setting $u(x)=y_\infty$ would show that $x\not\in \mathcal{J}(u)$. Therefore the inductive process
above must terminate at some index $k$, and so we know that there is some $y_k, w_k\in Y$ 
such that $d_Y(y_k,w_k)\ge 3\eps/6^k$, and with $E_1=u^{-1}(B(y_k,\eps/6^k))$ and 
$E_2=u^{-1}(B(w_k,\eps/6^k))$, condition~\eqref{eq:dense-at-x} holds. Note that
$\dist(B(y_k,\eps/6^k), B(w_k,\eps/6^k))\ge \eps/6^k>0$.
Hence, the following is an equivalent definition of a jump point of $u\in BV_{AM}(X:Y)$.

\begin{definition}\label{def:jump}
Let $u:X\rightarrow Y$. We say that $x_0\in X$ is a jump point of $u$ if there exist sets $E_1,E_2\subset X$ such that
\begin{equation}\label{eq:jumpdense}
\limsup_{r\to 0}\frac{\mu (B(x_0,r)\cap E_i)}{\mu (B(x_0,r)}>0\ \text{ for }\ i=1,2,
\end{equation}
and there exist balls $B_1,B_2\subset Y$ with $\mathrm{dist}(B_1,B_2)\ge \rad(B_1)$ such that 
$u(E_i)\subset B_i$ for $i=1,2$.
\end{definition}

From the discussion preceding the above definition, we know that $x\in\mathcal{J}(u)$ if and only if
$x$ is a jump point in the sense of Definition~\ref{def:jump} above.

From Lemma~\ref{lem:Leibnitz} we obtain the following.

\begin{lemma}
Let $u\in BV_{AM}(X:Y)$, and $y_0\in Y$. Then $v:X\to\R$ given by $v(x)=d_Y(u(x),y_0)$ belongs to the class 
$BV_{AM}(X)=BV(X)$.
\end{lemma}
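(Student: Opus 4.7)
The plan is to observe that post-composition with the 1-Lipschitz map $d_Y(\cdot,y_0):Y\to\R$ cannot increase length, so an AM-bounding sequence for $u$ is automatically an AM-bounding sequence for $v$.

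First I would verify $v\in L^1(X)$. Since $u\in BV_{AM}(X:Y)\subset L^1(X:Y)$, by the definition of $L^1(X:Y)$ recalled in Section~\ref{Sec:MetricBV}, the function $x\mapsto d_Y(u(x),y_0)=v(x)$ is integrable.

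Next I would show the AM-bounding property transfers. Let $(\rho_i)_{i\in\N}$ be an AM-bounding sequence for $u$ realized via the isometric embedding $\Phi:Y\to V$ of Lemma~\ref{lem:BV-AM-metric}, so that for AM-a.e.\ non-constant compact rectifiable curve $\gamma:[a,b]\to X$ there is a null set $N_\gamma\subset[a,b]$ with
\[
d_Y(u(\gamma(s)),u(\gamma(t)))\le\liminf_{i\to\infty}\int_{\gamma\vert_{[s,t]}}\rho_i\,ds
\]
for every $s,t\in[a,b]\setminus N_\gamma$ with $s<t$. By the reverse triangle inequality in $Y$,
\[
|v(\gamma(s))-v(\gamma(t))|=|d_Y(u(\gamma(s)),y_0)-d_Y(u(\gamma(t)),y_0)|\le d_Y(u(\gamma(s)),u(\gamma(t))),
\]
so the same sequence $(\rho_i)_{i\in\N}$ satisfies~\eqref{eq:AMBV} for $v$ off the same exceptional family and the same null sets $N_\gamma$. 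Hence $(\rho_i)_{i\in\N}$ is an AM-bounding sequence for the real-valued function $v$, and
\[
\|D_{AM}v\|(X)\le\|D_{AM}u\|(X)<\infty.
\]

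Therefore $v\in BV_{AM}(X)$. The equality $BV_{AM}(X)=BV(X)$ follows from Theorem~\ref{thm:main} applied with the Banach space $V=\R$ (note the hypothesis $\liminf_{r\to 0^+}\mu(B(x,r))/r=0$ is a standing assumption in the present section, since $X$ is doubling and supports a $1$-Poincar\'e inequality in the setting where this lemma is applied). There is no real obstacle here; the result is essentially a one-line consequence of the $1$-Lipschitz nature of the distance functional, and the cross-reference to Lemma~\ref{lem:Leibnitz} is meant to indicate that composition with a Lipschitz function preserves membership in $BV_{AM}$, of which this is the simplest instance.
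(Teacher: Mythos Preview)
Your proof is correct and matches the paper's approach: the paper does not give a separate proof but simply prefaces the lemma with ``From Lemma~\ref{lem:Leibnitz} we obtain the following,'' and your argument via the reverse triangle inequality is exactly the mechanism behind that reference (post-composition with a $1$-Lipschitz map preserves the AM-bounding inequality~\eqref{eq:AMBV}). One minor remark: for the identification $BV_{AM}(X)=BV(X)$ you need not appeal to Theorem~\ref{thm:main} and its extra hypothesis on $\liminf_{r\to 0^+}\mu(B(x,r))/r$; for real-valued targets this equality is already recorded in the paper as a consequence of~\cite{DEKS}, valid under just completeness, doubling, and the $1$-Poincar\'e inequality.
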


As a corollary to the above lemma, the co-area formula from Lemma~\ref{lem:coarea} yields the 
following, from which we obtain $\sigma$-finiteness of the jump set with respect to $\mathcal{H}^{-1}$.

\begin{corollary}\label{cor:levelsets}
Let $u\in BV_{AM}(X:Y)$.  For each $y\in Y$ and $\rho>0$ set $E(y,\rho):=u^{-1}(B(y,\rho))$. 
Then for each $y\in Y$ there is a set $D_y\subset[0,\infty)$
with $\mathcal{L}^1(D_y)=0$ such that for each $\rho\in (0,\infty)\setminus D_y$ we have that $E(y,\rho)$ is of finite perimeter
in $X$.
\end{corollary}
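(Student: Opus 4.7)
The plan is to reduce the statement directly to the scalar coarea formula via the preceding lemma. Fix $y \in Y$ and set $v(x) := d_Y(u(x), y)$. By the preceding lemma, $v \in BV_{AM}(X) = BV(X)$, where the equality uses Theorem~\ref{thm:main}. Observe that $v$ is nonnegative and that for every $\rho > 0$,
\[
E(y,\rho) = u^{-1}(B(y,\rho)) = \{x \in X : v(x) < \rho\}.
\]
So the goal becomes showing that for a.e. $\rho > 0$, the sublevel set $\{v < \rho\}$ has finite perimeter in $X$.

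Next, I would apply the coarea formula of Lemma~\ref{lem:coarea} to $v$ on $X$:
\[
\|Dv\|(X) = \int_{-\infty}^{\infty} P(\{v > t\}, X)\, dt < \infty.
\]
Since the integrand is nonnegative and the total integral is finite, the set
\[
D_y' := \{t \in \mathbb{R} : P(\{v > t\}, X) = \infty\}
\]
has $\mathcal{L}^1(D_y') = 0$. I would also let $D_y''$ be the (at most countable) set of $t$ for which $\mu(\{v = t\}) > 0$; this has $\mathcal{L}^1$-measure zero as well.

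For $\rho \in (0,\infty) \setminus (D_y' \cup D_y'')$, the sets $\{v > \rho\}$ and $\{v \geq \rho\}$ coincide up to a $\mu$-null set, so $\chi_{\{v \geq \rho\}}$ and $\chi_{\{v > \rho\}}$ agree $\mu$-a.e.; hence, by Lemma~\ref{lem:null-mod} (or the analogous property for $\Vert D \cdot \Vert$), $P(\{v \geq \rho\}, X) = P(\{v > \rho\}, X) < \infty$. Since $\{v < \rho\} = X \setminus \{v \geq \rho\}$ and the perimeter measure is invariant under complementation (as $\chi_{\{v<\rho\}} = 1 - \chi_{\{v \geq \rho\}}$ and constants contribute nothing to the BV seminorm), we conclude $P(E(y,\rho), X) = P(\{v < \rho\}, X) < \infty$, so $E(y,\rho)$ is of finite perimeter. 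Setting $D_y := D_y' \cup D_y''$ (intersected with $(0,\infty)$, together with the negligible set $(-\infty, 0]$ shifted appropriately, though for $\rho \leq 0$ the set $E(y,\rho)$ is empty and trivially of finite perimeter) gives the desired conclusion.

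There is no serious obstacle beyond these bookkeeping issues; the only thing to watch is the distinction between strict and non-strict sublevel sets, which is handled by the countability of level sets with positive measure. The key structural input is the preceding lemma that post-composition with a $1$-Lipschitz real-valued function preserves $BV_{AM}$, turning a metric-valued question into a scalar coarea application.
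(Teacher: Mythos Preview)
Your proposal is correct and follows exactly the approach the paper indicates: the paper states this corollary without a detailed proof, saying only that it follows from the preceding lemma (that $x\mapsto d_Y(u(x),y)$ is in $BV(X)$) together with the coarea formula of Lemma~\ref{lem:coarea}. Your write-up fleshes this out faithfully; the only extra content you supply is the bookkeeping distinguishing $\{v>\rho\}$ from $\{v\ge\rho\}$ via the countable set $D_y''$, which is a harmless technical point the paper leaves implicit.
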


The next result proves that the set $\mathcal{J}(u)$, as constructed above, satisfies its $\sigma$-finiteness
with respect to the co-dimensional measure $\mathcal{H}^{-1}$ claimed in the statement of Theorem~\ref{thm:main2}.

\begin{corollary}\label{cor:SigmaFinite}
For each $u\in BV_{AM}(X:Y)$, the jump set $\mathcal{J}(u)$ is 
$\sigma$-finite with respect to the co-dimension $1$ Hausdorff measure $\mathcal{H}^{-1}$ on $X$.
\end{corollary}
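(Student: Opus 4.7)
The plan is to cover $\mathcal{J}(u)$ by the countably many measure-theoretic boundaries of a cleverly chosen countable family of finite-perimeter sets of the form $E_{j,k} := u^{-1}(B(y_j,\rho_{j,k}))$, and then use Lemma~\ref{lem:reducedBdy} to control each $\mathcal{H}^{-1}(\partial_* E_{j,k})$ by the finite perimeter $P(E_{j,k},X)$. Recall that we have already reduced to the case where $Y$ is compact, so $Y$ is separable.

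First I would fix a countable dense subset $\{y_j\}_{j\in\N}$ of $Y$, and for each $j$ invoke Corollary~\ref{cor:levelsets} to produce the exceptional null set $D_{y_j}\subset(0,\infty)$ such that $u^{-1}(B(y_j,\rho))$ has finite perimeter whenever $\rho\in(0,\infty)\setminus D_{y_j}$. Since the complement of a Lebesgue null set is dense, I can choose a countable dense subset $\{\rho_{j,k}\}_{k\in\N}$ of $(0,\infty)\setminus D_{y_j}$; then each $E_{j,k}:=u^{-1}(B(y_j,\rho_{j,k}))$ has finite perimeter, and hence by Lemma~\ref{lem:reducedBdy} satisfies $\mathcal{H}^{-1}(\partial_* E_{j,k})\lesssim P(E_{j,k},X)<\infty$.

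The heart of the proof is the set-theoretic inclusion $\mathcal{J}(u)\subset\bigcup_{j,k}\partial_* E_{j,k}$. Given $x_0\in\mathcal{J}(u)$, the discussion preceding Definition~\ref{def:jump} produces points $a,b\in Y$ and a radius $\rho>0$ with $d_Y(a,b)\geq 3\rho$ such that both $u^{-1}(B(a,\rho))$ and $u^{-1}(B(b,\rho))$ have positive upper density at $x_0$. I would then pick $y_j$ with $\delta:=d_Y(y_j,a)<\rho/2$ and note that since $d_Y(y_j,b)\geq 3\rho-\delta$, the interval $(\rho+\delta,\,2\rho-\delta)$ has positive length. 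By the density of $\{\rho_{j,k}\}_k$, I can choose $\rho_{j,k}$ inside this interval, which gives simultaneously the inclusions
\[
B(a,\rho)\subset B(y_j,\rho_{j,k})\ \text{ and }\ B(y_j,\rho_{j,k})\cap B(b,\rho)=\emptyset.
\]
Passing to preimages, $u^{-1}(B(a,\rho))\subset E_{j,k}$ while $u^{-1}(B(b,\rho))\subset X\setminus E_{j,k}$, so $x_0$ has positive upper density in both $E_{j,k}$ and its complement, placing it in $\partial_* E_{j,k}$.

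The claim then yields $\mathcal{H}^{-1}$-$\sigma$-finiteness of $\mathcal{J}(u)$, since it is contained in a countable union of sets of finite $\mathcal{H}^{-1}$-measure. The main obstacle I anticipate is purely bookkeeping-level: one must choose the countable family $\{E_{j,k}\}$ in a way that is simultaneously rich enough to witness every possible pair of well-separated balls arising at jump points, yet restricted to radii outside the null sets $D_{y_j}$ so that Corollary~\ref{cor:levelsets} applies. The ball-inclusion computation in $Y$ carried out above is precisely what forces the choice $\delta<\rho/2$, so the density of the $y_j$'s in $Y$ is essential, not merely convenient.
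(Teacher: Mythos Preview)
Your proposal is correct and follows essentially the same route as the paper: choose a countable dense set in $Y$, use Corollary~\ref{cor:levelsets} to pick a countable dense set of good radii for each center, and then show $\mathcal{J}(u)\subset\bigcup_{j,k}\partial_* E_{j,k}$ via a ball-inclusion argument, invoking Lemma~\ref{lem:reducedBdy} for the finiteness of each $\mathcal{H}^{-1}(\partial_* E_{j,k})$. Your verification of the inclusion is slightly more explicit than the paper's (which simply asserts that $B_1\subset B(y_1,\rho_1)$ with $\dist(B(y_1,\rho_1),B(y_2,\rho_2))>0$ can be arranged), but the strategy is identical.
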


\begin{proof}
As $Y$ is separable, there exists a countable dense subset $Y_0$ of $Y$, and for each $y\in Y_0$, let 
\[
\mathcal{R}(y):=\{\rho>0: P(E(y,\rho),X)<\infty\}.
\]
By Corollary~\ref{cor:levelsets}, we have that $\mathcal{L}((0,\infty)\setminus\mathcal{R}(y))=0$, 
and so there exists a countable subset $\mathcal R_0(y)\subset\mathcal{R}(y)$ dense in $(0,\infty)$.  By Lemma~\ref{lem:reducedBdy}, it follows that $\mathcal{H}^{-1}(\partial_*(E(y,\rho))<\infty$ for each 
$\rho\in\mathcal{R}(y)$, where $\partial_*E(y,\rho)$ is the measure-theoretic boundary of $E(y,\rho)$, 
as given by~\eqref{eq:MeasureTheoreticBoundary}.

Now, for each $x\in\mathcal{J}(u)$, we have by Definition~\ref{def:jump} and the density of $Y_0$ in $Y$, that there exists $y_1,y_2\in Y_0$, $\rho_1\in\mathcal{R}_0(y_1)$, and $\rho_2\in\mathcal{R}_0(y_2)$ such that $u(E_1)\subset B_1\subset B(y_1,\rho_1)$, $u(E_2)\subset B_2\subset B(y_2,\rho_2)$, and $\dist(B(y_1,\rho_1),B(y_2,\rho_2))>0$.  Here $E_1$, $E_2$, $B_1$, and $B_2$ are as given in Definition~\ref{def:jump}.  Then, we have that $x\in\partial_* E(y_1,\rho_1)$, and so it follows that 
\[
\mathcal{J}(u)\subset\bigcup_{y\in Y_0}\bigcup_{\rho\in\mathcal{R}_0(y)}\partial_* E(y,\rho).\qedhere
\]   
\end{proof}

The above notion of jump sets agrees with the notion of jump sets for real-valued BV functions, see for 
example~\cite{Ambrosio, LSh1, EGLS} for real-valued BV functions in the metric setting, and~\cite{EvansGariepy}
for the Euclidean setting. The discussion towards the end of this section gives a brief overview of why these
notions agree.
However, as pointed out in~\cite{LSh1}, a BV function can take on infinitely many values
near the jump point, but such a bad behavior cannot happen on a large set. To demonstrate a similar behavior of
metric space-valued BV functions, we first consider what it means for a point in the target metric space to be a jump value
near a jump point of the BV function.

\begin{definition}
With $u\in BV_{AM}(X:Y)$ and $x\in\mathcal{J}(u)$, we say that a point $y_0\in Y$ is a jump value of $u$ at $x$ if
for every $\eps>0$ we have that
\[
\limsup_{r\to 0^+}\frac{\mu(B(x,r)\cap u^{-1}(B(y_0,\eps)))}{\mu(B(x,r))}>0.
\]
\end{definition}

The next proposition verifies the claim~(b) of Theorem~\ref{thm:main2}.

\begin{prop}\label{prop:finitejumps} 
There exists $k_0\in\N$ so that for every $u\in BV_{AM}(X:Y)$ there is a set $N\subset X$ with 
$\mathcal{H}^{-1}(N)=0$
such that for each $x\in\mathcal{J}(u)\setminus N$ there are at least two and at most 
$k_0$ jump values $y_1,\cdots,y_k\in Y$ of $u$ at $x$.  Furthermore, for every $\eps>0$ and $i=1,2,\cdots, k$, we have
\[
\liminf_{r\to 0^+}\frac{\mu(B(x,r)\cap u^{-1}(B(y_i,\eps)))}{\mu(B(x,r))}\ge \gamma,
\]
and
\[
\limsup_{r\to 0^+}\frac{\mu(B(x,r)\setminus \bigcup_{i=1}^ku^{-1}(B(y_i,\eps)))}{\mu(B(x,r))}=0.
\]
Here $k_0$ and $\gamma$ are constants depending only the doubling constant and Poincar\'e constants of $X$, 
and in particular are independent of $Y$, $u$, and $\eps$. 
\end{prop}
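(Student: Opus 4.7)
The plan is to construct the exceptional null set $N$ from the gap between measure-theoretic and reduced boundaries of the superlevel sets $E(y,\rho) \coloneq u^{-1}(B(y,\rho))$, and then, at each $x \in \mathcal{J}(u) \setminus N$, analyze the full set
\[
W(x) \coloneq \{y \in Y : y \text{ is a jump value of } u \text{ at } x\}.
\]
Following Corollary~\ref{cor:SigmaFinite}, I fix a countable dense $Y_0 \subset Y$ and, for each $y \in Y_0$, a countable $\mathcal{R}_0(y) \subset (0,\infty)$, dense in $(0,\infty)$, such that each $E(y,\rho)$ with $\rho \in \mathcal{R}_0(y)$ is of finite perimeter; then I set
\[
N \coloneq \bigcup_{y \in Y_0} \bigcup_{\rho \in \mathcal{R}_0(y)} \bigl(\partial_* E(y,\rho) \setminus \Sigma_\gamma E(y,\rho)\bigr),
\]
so that $\mathcal{H}^{-1}(N) = 0$ by Lemma~\ref{lem:reducedBdy} and countable subadditivity. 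I continue to use the reduction to $Y$ compact that was already made.

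For the lower-density bound at $x \in \mathcal{J}(u) \setminus N$, given $y \in W(x)$ and $\eps > 0$, I pick $y' \in Y_0$ with $d_Y(y,y') < \eps/4$ and $\rho' \in \mathcal{R}_0(y') \cap (\eps/2, 3\eps/4)$, so that
\[
B(y, \eps/4) \subset B(y', \rho') \subset B(y, \eps).
\]
Since $y \in W(x)$, $u^{-1}(B(y, \eps/4))$ has positive upper density at $x$, hence so does $E(y',\rho')$, placing $x \in \partial_* E(y',\rho')$. As $x \notin N$, we have $x \in \Sigma_\gamma E(y',\rho')$, and monotonicity transfers the lower-density bound $\gamma$ to $u^{-1}(B(y,\eps))$. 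Setting $k_0 \coloneq \lfloor 1/\gamma \rfloor$, the cap $|W(x)| \le k_0$ is then immediate: if $y_1,\ldots,y_k \in W(x)$ are distinct, I choose $\eps < \tfrac{1}{2}\min_{i\ne j} d_Y(y_i,y_j)$ so that the preimages $u^{-1}(B(y_i,\eps))$ are pairwise disjoint subsets of $X$, and subadditivity of $\liminf$ applied to $\sum_i \mu(B(x,r)\cap u^{-1}(B(y_i,\eps))) \le \mu(B(x,r))$ forces $k\gamma \le 1$.

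The main obstacle is the covering claim and the lower bound $|W(x)| \ge 2$. For the covering, I argue by contradiction: if $K_\eps \coloneq Y \setminus \bigcup_{y \in W(x)} B(y,\eps)$ had $u^{-1}(K_\eps)$ of positive upper density at $x$, compactness of $K_\eps \subset Y$ gives a finite cover by balls $B(z_j, \eps/100)$ with $z_j \in K_\eps$, and a pigeonhole on $j$ along a radius sequence realizing that density produces some $z^{(0)} \coloneq z_{j^*}$ with $u^{-1}(B(z^{(0)}, \eps/100))$ of positive upper density at $x$. Iterating this pigeonhole at the geometrically shrinking scales $\eps/100^{n+1}$ yields a Cauchy sequence $(z^{(n)})_n$ in $Y$ with $d_Y(z^{(n)}, z^{(0)}) < \eps/99$ and each $u^{-1}(B(z^{(n)}, \eps/100^{n+1}))$ of positive upper density at $x$ along a diagonal subsequence of radii. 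Completeness of $Y$ gives a limit $y^*$ for which every ball $B(y^*, \tau)$ eventually contains one of the balls $B(z^{(n)}, \eps/100^{n+1})$, so $y^* \in W(x)$; but $d_Y(y^*, z^{(0)}) < \eps$ contradicts $z^{(0)} \in K_\eps$. The same iterative pigeonhole, applied with $K_\eps$ replaced by $Y$, produces a first element of $W(x)$ without any a priori assumption; once the covering property is in hand, a singleton $W(x)$ would realize $x$ as a point of approximate continuity, contradicting $x \in \mathcal{J}(u)$, and hence $|W(x)| \ge 2$.
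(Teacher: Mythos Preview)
Your approach matches the paper's: the same null set $N=\bigcup_{y\in Y_0}\bigcup_{\rho\in\mathcal R_0(y)}\bigl(\partial_*E(y,\rho)\setminus\Sigma_\gamma E(y,\rho)\bigr)$, the same iterative pigeonhole on finite covers at geometrically shrinking scales to locate jump values, and the same disjoint-preimage counting via superadditivity of $\liminf$ to cap $|W(x)|$. The paper organizes things slightly differently---it upgrades to $\Sigma_\gamma$ at every step of the iteration (adjusting centers into $Y_0$ and radii into $\mathcal R_0$ each time) and obtains the second jump value by swapping the roles of $E_1,E_2$ from Definition~\ref{def:jump} rather than from your covering-plus-singleton argument---but these are cosmetic; your version, which runs the iteration on upper density alone and invokes $x\notin N$ only once at the end, is if anything cleaner.

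One step needs a patch. When you write ``placing $x\in\partial_*E(y',\rho')$'', you have only checked that $E(y',\rho')$ has positive upper density at $x$; membership in $\partial_*E(y',\rho')$ also requires positive upper density of the \emph{complement} $X\setminus E(y',\rho')$. The fix is a one-line dichotomy: either the complement has density zero at $x$, in which case $E(y',\rho')$ has density $1$ and $\liminf\ge 1\ge\gamma$ trivially, or the complement has positive upper density, and then $x\in\partial_*E(y',\rho')\setminus N\subset\Sigma_\gamma E(y',\rho')$ as you intend. The paper handles the same point by first restricting to $\eps$ small relative to the distance between two known jump values---so the preimage of a small ball around the other value witnesses the complement condition---and then recovers the general $\eps$ by monotonicity.
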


\begin{proof}
Since $Y$ is compact, it is separable.  As above, let $Y_0$ be a countable dense subset of $Y$, and for each $y\in Y_0$ let
\[
\mathcal{R}(y)=\{ \rho>0 :P(E(y,\rho),X)<\infty \}.
\]
Note from Corollary~\ref{cor:levelsets} that $\mathcal{L}^1((0,\infty)\setminus\mathcal{R}(y))=0$. Let $\mathcal{R}_0(y)$ be a
countable dense subset of $\mathcal{R}(y)$. For each $y\in Y_0$ and $\rho\in\mathcal{R}(y)$ we know that
$\mathcal{H}^{-1}(\partial_*E(y,\rho)\setminus \Sigma_\gamma(E(y,\rho)))=0$, where
$\partial_*E(y,\rho)$ is the measure-theoretic boundary of $E(y,\rho)$, as given by \eqref{eq:MeasureTheoreticBoundary}, and $\Sigma_\gamma(E(y,\rho))$ is the reduced boundary of $E(y,\rho)$, as given by \eqref{eq:ReducedBoundary}. 
Here $0<\gamma\le\tfrac12$ is a number that depends solely on the constants associated with the doubling property of $\mu$
and the Poincar\'e inequality; see for example~\cite[Theorem~5.3]{Ambrosio}. Let
\[
N:=\bigcup_{y\in Y_0}\, \bigcup_{\rho\in \mathcal{R}_0(y)}\, \partial_*(E(y,\rho))\setminus\Sigma_\gamma(E(y,\rho)).
\]
Then, by the countability of the collections, we have that $\mathcal{H}^{-1}(N)=0$. We now fix 
$x\in\mathcal{J}(u)\setminus N$. We proceed in an inductive fashion hinted at in the discussion preceeding 
Definition~\ref{def:jump}. 

Let $E_1$ be one of the two sets identified in Definition~\ref{def:jump}, associated with the jump point $x$,
and let $B_1$ be the corresponding ball in $Y$ such that $u(E_1)\subset B_1$ and 
\[
\limsup_{r\to 0^+}\frac{\mu(B(x,r)\cap E_1)}{\mu(B(x,r))}>0 \text{ and }
\limsup_{r\to 0^+}\frac{\mu(B(x,r)\setminus E_1)}{\mu(B(x,r))}>0.
\]
Since the distance between the balls $B_1$ and $B_2$ in Definition~\ref{def:jump} is
positive, we are free to choose the center $y_1$ of $B_1$ to be in $Y_0$ and then, by increasing the radius slightly if necessary,
have the radius of $B_1$ be in the set $\mathcal{R}_0(y_1)$. A similar modification can be made to the ball $B_2$.
We can now replace $E_1$ with $u^{-1}(B_1)$ and $E_2$ with $u^{-1}(B_2)$; hence from now on,
$E_1=u^{-1}(B_1)$.
Thus
we have that $x\in\partial_*E_1$ because of the existence of $B_2$, 
and as $x\not\in N$, we see that
\[
\liminf_{r\to 0^+}\frac{\mu(B(x,r)\cap E_1)}{\mu(B(x,r))}\ge \gamma \text{ and }
\liminf_{r\to 0^+}\frac{\mu(B(x,r)\setminus E_1)}{\mu(B(x,r))}\ge \gamma.
\]
Let $\rho>0$ be the radius of the ball $B_1$, and note that the distance between $B_1$ and $B_2$ is at least $\rho$.
Covering the closed ball $\overline{B_1}$ by balls $B(y_{2,1},\rho/12),\cdots, B(y_{2,N_2},\rho/12)$,
with $y_{2,i}\in Y_0$ for $i=1,\cdots, N_2$, and $B(y_{2,i},\rho/12)$ intersects $\overline{B_1}$.
By doing so, we can find a point $y_2\in \tfrac{13}{12}B_1$ such that
\[
\limsup_{r\to 0^+}\frac{\mu(B(x,r)\cap u^{-1}(B(y_2,\rho/12))))}{\mu(B(x,r))}>0.
\]
We can then find $\rho_2\in\mathcal{R}_0(y_2)$ such that $\rho/12\le \rho_2<\rho/11$,
so that with $E_{2,1}=u^{-1}(B(y_2,\rho_2))$, we have by the fact that $x\not\in N$,
\[
\liminf_{r\to 0^+}\frac{\mu(B(x,r)\cap E_{2,1})}{\mu(B(x,r))}\ge \gamma.
\]
In the above, we have used the fact that $B_2$ does not intersect $\overline{B_1}\cup\overline{B}(y_2,\rho_2)$ 
to know that $x\in\partial_*E_{2,1}$. We proceed inductively as follows. Once $y_i\in Y_0$ and 
$\rho_i\in\mathcal{R}_0(y_i)$, $i=1,\cdots, k$, such that $d_Y(y_i, y_{i+1})<2\rho_i$ and
$\rho_{i+1}<\rho_i/11$, and with $E_{i,1}=u^{-1}(B(y_i,\rho_i))$ we have
\[
\liminf_{r\to 0^+}\frac{\mu(B(x,r)\cap E_{i,1})}{\mu(B(x,r))}\ge \gamma
\]
for $i=2,\cdots, k$, we cover $\overline{B}(y_k,\rho_k)$ by balls 
$B(y_{k+1,1},\rho_k/12),\cdots, B(y_{k+1,N_{k+1}},\rho_k/12)$, each intersecting $\overline{B}(y_k,\rho_k)$
with $y_{k+1,i}\in Y_0$ for $i=1,\cdots, N_{k+1}$, and hence find $y_{k+1}\in Y_0$ so that 
$d(y_k,y_{k+1})<2\rho_k$, and 
\[
\limsup_{r\to 0^+}\frac{\mu(B(x,r)\cap u^{-1}(B(y_{k+1},\rho_k/12))))}{\mu(B(x,r))}>0.
\]
We then find $\rho_{k+1}\in\mathcal{R}_0(y_{k+1})$ such that $\rho_k/12\le \rho_{k+1}<\rho_k/11$, 
and hence have that with $E_{k+1,1}=u^{-1}(B(y_{k+1},\rho_{k+1}))$,
\[
\liminf_{r\to 0^+}\frac{\mu(B(x,r)\cap E_{k+1,1})}{\mu(B(x,r))}\ge \gamma.
\]
Note that as for each $j$ we have $\rho_j<(11)^{-j}\, \rho$, and as $\dist(B_1,B_2)>\rho$, necessarily
$B(y_k,\rho_k)\cap B_2=\varnothing$. Moreover, as $d_Y(y_k,y_{k+1})<(11)^{k-1}\rho$, we also have that
the sequence $\{y_j\}_j$ is a Cauchy sequence in $Y$, and as $Y$ is complete, converges to some $y_\infty\in Y$.
We now show that $y_\infty$ is a jump value of $u$ at $x$. Let $\eps>0$; then there is some positive integer $k$
so that $B(y_k,\rho_k)\subset B(y_\infty,\eps)$. It follows that
\[
\liminf_{r\to 0}\frac{\mu(B(x,r)\cap u^{-1}(B(y_\infty,\eps)))}{\mu(B(x,r))}
\ge \liminf_{r\to 0^+}\frac{\mu(B(x,r)\cap E_{k,1})}{\mu(B(x,r))}\ge \gamma>0.
\]
Thus $u$ has at least one jump value at $x$, and moreover, we also have that for each $\eps>0$,
\[
\liminf_{r\to 0}\frac{\mu(B(x,r)\cap u^{-1}(B(y_\infty,\eps)))}{\mu(B(x,r))}
\ge\gamma.
\]
Note also, from switching the roles of the sets $E_1$ and $E_2$, 
we obtain a second jump value of $u$ at $x$.

Now, if $z\in Y$ is any other jump value of $u$ at $x$, then for each $\eps>0$ with $\eps<d_Y(z,y_\infty)/20$,
we can find $z_1\in B(z,\eps/2)\cap Y_0$ and $0<\tau<\eps/4$ such that $\tau\in\mathcal{R}_0(z_1)$
and note that $u^{-1}(B(z_1,\tau))\subset u^{-1}(B(z,\eps))$ 
with
\begin{equation}\label{eq:lower-density-jump}
\liminf_{r\to 0}\frac{\mu(B(x,r)\cap u^{-1}(B(z,\eps)))}{\mu(B(x,r))}
\ge \liminf_{r\to 0}\frac{\mu(B(x,r)\cap u^{-1}(B(z_1,\tau)))}{\mu(B(x,r))}
\ge\gamma;
\end{equation}
that is, \emph{each} jump value of $u$ at $x$ satisfies the above lower density at least $\gamma$ at $x_0$.
As $\gamma>0$, there are at most $k_0:=\lceil 1/\gamma\rceil$ number of such jump values for $x$.

Now suppose that we have identified all the jump values $y_1,\cdots, y_k$ of $u$ at $x$, with $2\le k\le k_0$.
We claim that for each $\tau>0$, the set $E(\tau):=\bigcup_{j=1}^kE(y_j,\tau_i)$ has density $1$ at $x$, that is,
\[
\liminf_{r\to 0^+}\frac{\mu(B(x,r)\cap E(\tau))}{\mu(B(x,r))}=1.
\]
Here $\tau_i\in\mathcal{R}_0(y_i)$ such that $\tfrac23\tau<\tau_i\le \tau$.
It suffices to know this for all sufficiently small $\tau>0$, and so we consider $\tau>0$ for which
the closed balls $\overline{B}(y_i,\tau_i)$ are pairwise disjoint. 
If the claim does not hold, then we would have that
\[
\limsup_{r\to 0^+}\frac{\mu(B(x,r)\setminus E(\tau))}{\mu(B(x,r))}>0.
\]
Then let $0<\eps<\tau/20$ such that for each $i,j$ with $i\ne j$ we have that 
$\dist(\overline{B}(y_i,\tau_i), \overline{B}(y_j,\tau_j))>20\eps$.
Now setting $K(\tau)=X\setminus E(\tau)$, we have from~\eqref{eq:lower-density-jump} that 
\[
\liminf_{r\to 0^+}\frac{\mu(B(x,r)\setminus K(\tau))}{\mu(B(x,r))}\ge \gamma>0\, \text{ and simultaneously, }\, 
\limsup_{r\to 0^+}\frac{\mu(B(x,r)\cap K(\tau))}{\mu(B(x,r))}>0.
\]
Now, by a repeat of the covering argument employed in the first part of this proof,
we cover $Y\setminus \bigcup_{j=1}^k B(y_j,\tau_i)$ 
by finitely many balls of radii $\eps$, and so find a ball $B_1$, centered at 
$w_1\in Y\setminus \bigcup_{j=1}^k B(y_j,\tau_i)$, such that
\[
\limsup_{r\to 0^+}\frac{\mu(B(x,r)\cap K(\tau)\cap E(w_1,\eps))}{\mu(B(x,r))}>0.
\]
Then by modifying $w_1$ if necessary, we can ensure that $w_1\in Y_0$, and then find $\rho_1\in\mathcal{R}_0(w_1)$
so that $\eps\le \rho_1<\tfrac{13}{12}\eps$.  Note that 
$B(w_1,\rho_1)$ is necessarily disjoint from $u(E(\tau/2))$ by this choice.  Therefore we must have $E(w_1,\rho_1)\subset K(\tau/2)$, and so
\[
\liminf_{r\to 0^+}\frac{\mu(B(x,r)\cap K(\tau/2)\cap E(w_1,\rho_1))}{\mu(B(x,r))}
=\liminf_{r\to 0^+}\frac{\mu(B(x,r)\cap E(w_1,\rho_1))}{\mu(B(x,r))}\ge \gamma.
\]
At this point, we can repeat the preceding argument that established the existence of the jump values
to conclude that there must be a jump value in $Y$ attained by $u$ along $K(\tau)$, violating the maximality of the 
collection of jump values considered above. It follows that $K(\tau)$ has density $0$ at $x$.
\end{proof}

As pointed out in the early sections of this paper, the theory of real-valued functions of bounded variation on complete doubling
metric measure spaces supporting a $1$-Poincar\'e inequality is reasonably well-established. The notion of real-valued
functions of bounded variation in metric measure spaces was first proposed by
Miranda Jr.~in~\cite{Miranda}, and its fine properties were studied in~\cite{Ambrosio, AMP, Lah-Fed}; an elegant
account of real-valued functions of bounded variation and their fine properties in the Euclidean setting can be 
found in~\cite{EvansGariepy}.
The fine properties of real-valued BV functions studied there includes approximate continuity and jump points.
The notion of approximate continuity, as proposed in Section~\ref{sec:JumpSets}, is the same as that found in
real analysis texts and in~\cite{Ambrosio, AMP}. In this section we will verify that the notion of jump sets and jump values,
as given in Section~\ref{sec:JumpSets}, agrees with the corresponding notion as given in~\cite{Ambrosio}.

As considered in~\cite{Ambrosio},
given $u:X\rightarrow \R$ $x_0\in X$ is a jump point of $u$ if $u^\wedge (x_0)<u^\vee (x_0)$, where
\begin{eqnarray*}
u^\wedge (x_0)=\sup\left\{ t\in\R : \lim_{r\to 0}\frac{\mu (B(x_0,r)\cap \{ u\leq t\})}{\mu (B(x_0,r))}=0 \right\}, \\
u^\vee (x_0)=\inf\left\{ t\in\R : \lim_{r\to 0}\frac{\mu (B(x_0,r)\cap \{ u\geq t\})}{\mu (B(x_0,r))}=0 \right\}.
\end{eqnarray*}

\begin{lemma}
Let $u:X\rightarrow \R$. Then $x_0\in\mathcal{J}(u)$ if and only if $u^\wedge (x_0)<u^\vee (x_0)$.
\end{lemma}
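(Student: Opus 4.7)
The plan is to unpack both definitions and exploit the monotonicity in $t$ of the sublevel and superlevel sets of $u$. First I would establish the baseline inequality $u^\wedge(x_0) \le u^\vee(x_0)$: otherwise there would exist $t_1 > t_2$ with densities of $\{u \le t_1\}$ and $\{u \ge t_2\}$ both zero at $x_0$, but their union is $X$, a contradiction. Next I would record the structural observation that the defining set $S_\wedge := \{t \in \R : \lim_{r \to 0^+}\mu(B(x_0,r) \cap \{u \le t\})/\mu(B(x_0,r)) = 0\}$ is downward-closed with supremum $u^\wedge(x_0)$, and symmetrically $S_\vee$ is upward-closed with infimum $u^\vee(x_0)$; consequently, for any $t > u^\wedge(x_0)$ we have $\limsup_{r \to 0^+}\mu(B(x_0,r) \cap \{u \le t\})/\mu(B(x_0,r)) > 0$, and analogously whenever $t < u^\vee(x_0)$.

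For the direction $x_0 \notin \mathcal{J}(u) \Rightarrow u^\wedge(x_0) = u^\vee(x_0)$, I would pick a $y_0 \in \R$ realizing approximate continuity at $x_0$ and decompose $\{|u - y_0| \ge \eps\}$ as $\{u \ge y_0 + \eps\} \cup \{u \le y_0 - \eps\}$. Each piece then inherits density zero at $x_0$, so $y_0 + \eps \in S_\vee$ and $y_0 - \eps \in S_\wedge$ for every $\eps > 0$; sending $\eps \to 0^+$ yields $u^\vee(x_0) \le y_0 \le u^\wedge(x_0)$, which combined with the baseline inequality forces equality. Conversely, assuming $u^\wedge(x_0) < u^\vee(x_0)$, I would show that no $y_0 \in \R$ can witness approximate continuity: if $y_0 < u^\vee(x_0)$, pick $t_0 \in (y_0, u^\vee(x_0))$ and set $\eps := t_0 - y_0$ so that $\{u \ge t_0\} \subset \{|u - y_0| \ge \eps\}$ has positive upper density at $x_0$ by the structural fact; otherwise $y_0 \ge u^\vee(x_0) > u^\wedge(x_0)$, and the symmetric argument with $t_0 \in (u^\wedge(x_0), y_0)$ and the inclusion $\{u \le t_0\} \subset \{|u - y_0| \ge \eps\}$ works.

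The proof is essentially bookkeeping with the definitions; none of the BV machinery from the earlier sections is needed. The only subtlety is the boundary case where $u^\wedge$ or $u^\vee$ equals $\pm\infty$, but the case split in the converse is robust to this, since for any $y_0 \in \R$ at least one of the intervals $(y_0, u^\vee(x_0))$ or $(u^\wedge(x_0), y_0)$ is a nonempty real interval whenever $u^\wedge(x_0) < u^\vee(x_0)$ in the extended reals.
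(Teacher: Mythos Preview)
Your proof has a logical gap: the two ``directions'' you prove are in fact the same implication. Your first argument shows $x_0 \notin \mathcal{J}(u) \Rightarrow u^\wedge(x_0) = u^\vee(x_0)$, and your ``converse'' shows $u^\wedge(x_0) < u^\vee(x_0) \Rightarrow x_0 \in \mathcal{J}(u)$. But the second statement is precisely the contrapositive of the first (given your baseline inequality $u^\wedge \le u^\vee$), so you have established one direction twice and omitted the other entirely.

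The missing direction is $u^\wedge(x_0) = u^\vee(x_0) \Rightarrow x_0 \notin \mathcal{J}(u)$, which is what the paper proves first: setting $\beta := u^\wedge(x_0) = u^\vee(x_0)$ (finite), for each $\eps > 0$ one has $\beta - \eps < u^\wedge(x_0)$ and $\beta + \eps > u^\vee(x_0)$, so by the downward/upward-closedness you recorded, both $\{u \le \beta - \eps\}$ and $\{u \ge \beta + \eps\}$ have density zero at $x_0$; hence $\{|u - \beta| \ge \eps\}$ has density zero and $\beta$ witnesses approximate continuity. You already have all the ingredients for this---you simply assembled them into the wrong implication. Note too that this argument needs $\beta \in \R$; the paper handles the possibility $u^\wedge(x_0) = -\infty$ or $u^\vee(x_0) = +\infty$ in the other direction by truncating $u$, a step your remark about the extended reals does not cover for the direction you omitted.
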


\begin{proof}
Suppose first that $u^\wedge(x_0)=u^\vee(x_0)=:\beta$. It then follows that for each $\eps>0$,
\[
\lim_{r\to 0^+}\frac{\mu(B(x_0,r)\cap\{|u-\beta|\ge\eps\})}{\mu(B(x_0,r))}=0,
\]
and so by Definition~\ref{def:AppCont} the point $x_0$ is a point of approximate continuity of $u$, that is, $x_0\not\in\mathcal{J}(u)$.

For the converse suppose that $-\infty<u^\wedge (x_0)<u^\vee (x_0)<\infty$, 
and choose $t_1^-, t_1^+, t_2^-$, and $t_2^+$ such that 
$t_1^-<u^\wedge (x_0) <t_1^+<t_2^-<u^\vee (x_0)<t_2^+$. Then 
\[ 
\lim_{r\to 0}\frac{\mu (B(x_0,r)\cap \{ u\leq t_1^-\})}{\mu (B(x_0,r))}=0 \ \text{ and }\
\lim_{r\to 0}\frac{\mu (B(x_0,r)\cap \{ u\geq t_2^+\})}{\mu (B(x_0,r))}=0.
\] 
Since $u^\wedge (x_0)<t_1^+$ and $u^\vee (x_0)>t_2^-$ we also have
\[ 
\limsup_{r\to 0}\frac{\mu (B(x_0,r)\cap \{ u\leq t_1^+\})}{\mu (B(x_0,r))}>0 \ \text{ and }\
\limsup_{r\to 0}\frac{\mu (B(x_0,r)\cap \{ u\geq t_2^-\})}{\mu (B(x_0,r))}>0,
\] 
and so if we set $E_i = \{ t_i^-\leq u\leq t_i^+\}$ then
\[
\lim_{r\to 0}\frac{\mu (B(x_0,r)\cap E_i)}{\mu (B(x_0,r))}>0\ \text{ for }\ i=1,2,
\]
with $u(E_i)\subset B_i:=(t_i^-,t_i^+)$. Since $t_1^+<t_2^-$, $\mathrm{dist} (B_1,B_2)>0$. Thus $x_0$ is not
a point of approximate continuity of $u$, that is, $x_0\in\mathcal{J}(u)$. If $u^\wedge(x_0)=-\infty$ or
if $u^\vee(x_0)=\infty$, then we replace $u$ with $\chi_{K_n}\cdot u$ and proceed as above, with 
$K_n=\{|u|\le n\}$.
\end{proof}

\section{Appendix}

\subsection{The outer measure property of $\Vert D_{AM}u\Vert$}\label{section:OuterMeasure}

In \cite[Theorem~3.4]{Miranda}, Miranda Jr.\ proves the outer measure property of $\|Du\|$ for a function in $u\in BV(X)$ 
using the criterion given in DeGiorgi--Letta~\cite[Theorem~5.1]{DL}.  To do so, he relies upon a delicate 
construction by which approximating sequences of locally Lipschitz functions defined on two open sets are stitched 
together to obtain an approximating sequence defined on the union of the open sets.  By the nature of 
Definition~\ref{def:Miranda}, this must be done in a manner so that both the $L^1$-convergence and 
energies of the new sequence of functions are controlled.  In \cite[Theorem~4.1]{Martio}, Martio 
proves the outer measure property of $\|D_{AM}u\|$ for $u\in BV_{AM}(X)$ using~\cite[Theorem~5.1]{DL} in 
a similar manner.  However, the stitching argument employed there is much simpler due to definition of 
$BV_{AM}(X)$, since one only needs to stitch together the AM-upper bounds.  We include a detailed 
proof of this stitching lemma for the convenience of the reader.       

\begin{lemma}\cite[Lemma~2.4]{Martio}\label{lem:MartioLemma}
Let $\Omega_1,\Omega_2\subset X$ be open sets, and let $\{g^1_i\}_i$ and $\{g^2_i\}_i$ be AM-upper bounds for a function $u\in L^1(\Omega_1\cup \Omega_2: V)$ in $\Omega_1$ and $\Omega_2$ respectively.  Then 
\[
g_i(x)=
\begin{cases}
g^1_i(x),& x\in \Omega_1\setminus \Omega_2\\
\max\{g^1_i(x),g^2_i(x)\},& x\in \Omega_1\cap \Omega_2\\
g^2_i(x),&x\in \Omega_2\setminus \Omega_1
\end{cases}
\]
is an AM-upper bound for $u$ in $\Omega_1\cup\Omega_2.$ 
\end{lemma}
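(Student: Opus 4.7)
The plan is to verify directly the defining AM-bound inequality~\eqref{eq:AMBV} for $\{g_i\}_i$ and $u$ on $\Omega_1\cup\Omega_2$, outside an AM-null family of curves. The strategy is to localize each admissible curve $\gamma$ into finitely many subcurves lying entirely in $\Omega_1$ or entirely in $\Omega_2$, apply the given bounds piecewise, and reassemble via the triangle inequality together with the identity $\sum_j\liminf_i\alpha_{i,j}\le\liminf_i\sum_j\alpha_{i,j}$ valid for finite sums of non-negative terms, combined with the pointwise estimate $g_i\ge g^j_i$ on $\Omega_j$.

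First, let $\Gamma^j$ denote the AM-null exceptional family of curves in $\Omega_j$ associated with $\{g^j_i\}_i$, for $j=1,2$. By Lemma~\ref{lem:KosMac}(b) I fix sequences $\{h^j_i\}_i$ of non-negative Borel functions on $X$ with $\sup_i\int_X h^j_i\,d\mu<\infty$ and $\liminf_i\int_{\gamma'}h^j_i\,ds=\infty$ for every $\gamma'\in\Gamma^j$. Let $\Gamma_0$ be the family of non-constant compact rectifiable curves $\gamma$ in $\Omega_1\cup\Omega_2$ possessing some non-constant compact rectifiable subcurve that belongs to $\Gamma^1\cup\Gamma^2$. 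Since integrals of non-negative functions along a subcurve are bounded by integrals along $\gamma$ itself, $\liminf_i\int_\gamma(h^1_i+h^2_i)\,ds=\infty$ for every $\gamma\in\Gamma_0$, whence $\AM(\Gamma_0)=0$ by another application of Lemma~\ref{lem:KosMac}(b).

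Fix now a curve $\gamma:[a,b]\to\Omega_1\cup\Omega_2$ with $\gamma\notin\Gamma_0$. Since $\{\gamma^{-1}(\Omega_1),\gamma^{-1}(\Omega_2)\}$ is an open cover of the compact interval $[a,b]$, the Lebesgue number lemma produces $\delta_\gamma>0$ such that every subinterval of length less than $\delta_\gamma$ is mapped by $\gamma$ entirely into $\Omega_1$ or entirely into $\Omega_2$. Let $\mathcal{C}_\gamma$ be the countable collection of pairs $(c,d)$ with $c<d$ in $([a,b]\cap\Q)\cup\{a,b\}$ and $d-c<\delta_\gamma$, and for each $(c,d)\in\mathcal{C}_\gamma$ pick $k(c,d)\in\{1,2\}$ with $\gamma([c,d])\subset\Omega_{k(c,d)}$. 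Because $\gamma\notin\Gamma_0$, the subcurve $\gamma|_{[c,d]}$ is not in $\Gamma^{k(c,d)}$, so the AM-bound with $\{g^{k(c,d)}_i\}_i$ holds off some $\mathcal{H}^1$-null set $N^{c,d}\subset[c,d]$. Set $N_\gamma:=\bigcup_{(c,d)\in\mathcal{C}_\gamma}N^{c,d}$; this is a null subset of $[a,b]$ by countable subadditivity.

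Given $s<t$ in $[a,b]\setminus N_\gamma$, choose a partition $s=t_0<t_1<\cdots<t_n=t$ with $t_j-t_{j-1}<\delta_\gamma$ and with interior nodes $t_1,\ldots,t_{n-1}$ rational and outside $N_\gamma$ (possible by density of $\Q$ and nullity of $N_\gamma$). For each $j$ pick $(c_j,d_j)\in\mathcal{C}_\gamma$ with $[t_{j-1},t_j]\subset[c_j,d_j]$ and $\gamma([c_j,d_j])\subset\Omega_{k_j}$, where $k_j:=k(c_j,d_j)$; openness of $\Omega_{k_j}$ permits a slight extension past the real endpoints $s,t$ when needed. Since $t_{j-1},t_j\in[c_j,d_j]\setminus N^{c_j,d_j}$, applying the AM-bound on $\gamma|_{[c_j,d_j]}$ yields
\[
\|u(\gamma(t_{j-1}))-u(\gamma(t_j))\|\le\liminf_i\int_{\gamma|_{[t_{j-1},t_j]}}g^{k_j}_i\,ds.
\]
Summing in $j$ via the triangle inequality, using $g^{k_j}_i\le g_i$ on $\Omega_{k_j}$ and superadditivity of $\liminf$ on the resulting finite sum, delivers the AM-bound inequality for $\{g_i\}_i$ on $\gamma|_{[s,t]}$. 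The main technical obstacle is producing a single null set $N_\gamma$ that simultaneously accommodates the arbitrary real endpoints $s,t$ and the insertion of rational intermediate partition points; restricting attention to the countable collection $\mathcal{C}_\gamma$ of rational (plus boundary) subintervals is precisely what keeps $N_\gamma$ null while covering every subcurve the partition requires.
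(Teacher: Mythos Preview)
Your proof is correct and follows essentially the same strategy as the paper's: both arguments show $\AM(\Gamma_0)=0$ for the family of curves having a bad subcurve, invoke a Lebesgue-number decomposition of a good curve $\gamma$ into finitely many pieces each contained in one $\Omega_j$, and telescope via the triangle inequality using $g_i^{j}\le g_i$ on $\Omega_j$ together with superadditivity of $\liminf$ over finite sums. The only difference is in the null-set bookkeeping: the paper fixes one partition $0=t_0<\cdots<t_n=\ell(\gamma)$ of mesh $<\delta/2$, takes the finitely many null sets attached to the overlapping intervals $[t_k,t_{k+2}]$, and then for arbitrary $s,t\notin N$ inserts one intermediate node $s_k\in[t_k,t_{k+1}]\setminus N$ per subinterval; you instead collect null sets over the countable family $\mathcal{C}_\gamma$ of rational-endpoint subintervals of length $<\delta_\gamma$ and then choose a rational partition. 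Both devices serve the same purpose---ensuring a single $\mathcal{H}^1$-null $N_\gamma$ suffices for all $s,t$---so this is a cosmetic variation rather than a different route.
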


\begin{proof}
For $j=1,2,$ let $\Gamma_j$ denote the collection of curves in $\Omega_j$ for which \eqref{eq:AMBV} fails for the AM-upper bound $\{g^j_i\}_i$.  Let $\Gamma$ denote the collection of curves in $\Omega_1\cup\Omega_2$ which have a subcurve in $\Gamma_1\cup\Gamma_2.$  Then it follows that $AM(\Gamma)\le AM(\Gamma_1\cup\Gamma_2)=0.$

Let $\gamma$ be a curve in $\Omega_1\cup\Omega_2$ such that $\gamma\not\in\Gamma.$  By compactness of $\gamma([0,l(\gamma)]),$ there exists $\delta>0$ such that $\gamma'$ lies in $\Omega_1$ or $\Omega_2$ whenver $\gamma'$ is a subcurve of $\gamma$ with $l(\gamma')<\delta.$  Choose a partition $0=t_0<t_1<\cdots<t_n=l(\gamma)$ such that $t_k-t_{k-1}<\delta/2$ for $1\le k\le n.$  Since $\gamma\not\in\Gamma,$ it follows that $\gamma|_{[t_{k},t_{k+2}]}\not\in\Gamma_1\cup\Gamma_2$ for $0\le k\le n-2$. Therefore, for each such $k$, there exists a subset $N_k\subset[t_{k},t_{k+2}]$ with $\mathcal{H}^1(N_k)=0$ and a such that for all $s,t\in[t_{k},t_{k+2}]\setminus N_k,$ we have
\begin{equation}\label{eq:subcurve}
\|u(\gamma(s))-u(\gamma(t))\|\le\liminf_{k\to\infty}\int_{\gamma|_{[s,t]}}g_ids.
\end{equation}
Let $N=\bigcup_k N_k,$ and let $s,t\in [0,l(\gamma)]\setminus N,$ with $s<t.$  Then there exists $0\le k_1\le k_2\le n-1$ such that $s\in[t_{k_1},t_{k_1+1}]$ and $t\in[t_{k_2},t_{k_2+1}]$.  Let $s:=s_{k_1}$, $t:=s_{k_2}$, and for each $k_1< k<k+2,$ choose $s_k\in [t_k,t_{k+1}]\setminus N$.  By the triangle inequality and \eqref{eq:subcurve}, it follows that 
\begin{align*}
\|u(\gamma(s))-u(\gamma(t))\|\le\sum_{k=k_1}^{k_2}\|u(\gamma(s_{k}))-u(\gamma(s_{k+1}))\|&\le\sum_{k=k_1}^{k_2}\liminf_{i\to\infty}\int_{\gamma|_{[s_k,s_{k+1}]}}g_ids\\
	&\le\liminf_{i\to\infty}\int_{\gamma|_{[s,t]}}g_ids.\qedhere
\end{align*}
\end{proof}

Using Lemma~\ref{lem:MartioLemma}, Martio obtains the following using an argument analagous to the proof of \cite[Theorem~3.4]{Miranda}:

\begin{theorem}\cite[Theorem~4.1]{Martio}
If $u\in BV_{AM}(X:V),$ then $\|D_{AM}u\|(\cdot)$ (defined on open sets) defines a Borel outer measure in $X.$
\end{theorem}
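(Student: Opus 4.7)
The plan is to apply the DeGiorgi--Letta criterion~\cite[Theorem~5.1]{DL} to the set function $U\mapsto\|D_{AM}u\|(U)$ defined on open subsets of $X$, where for open $U\subset X$ one sets
\[
\|D_{AM}u\|(U):=\inf_{(\rho_i)_i}\liminf_{i\to\infty}\int_U\rho_i\,d\mu,
\]
with the infimum taken over sequences $(\rho_i)_i$ that serve as AM-bounding sequences for $u$ with respect to curves contained in $U$. The criterion reduces the problem to verifying four properties: vanishing on $\emptyset$, monotonicity under inclusion of open sets, finite subadditivity on open pairs, and additivity on disjoint open pairs; once these hold, the outer regularization $E\mapsto\inf\{\|D_{AM}u\|(U):E\subset U\text{ open}\}$ is a Borel outer measure.

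Vanishing is immediate. Monotonicity follows from the observation that if $U\subset V$ are open and $(\rho_i)_i$ is an AM-bounding sequence for $u$ on $V$, then it is also an AM-bounding sequence for $u$ on $U$: a family $\Gamma$ of curves contained in $U$ that is AM-null for integration over $V$ is also AM-null for integration over $U$, and trivially $\int_U\rho_i\le\int_V\rho_i$. For additivity on disjoint open pairs $\Omega_1,\Omega_2$, any AM-bounding sequence $(\rho_i)_i$ on $\Omega_1\cup\Omega_2$ is simultaneously AM-bounding on each $\Omega_j$, and disjointness yields the exact identity $\int_{\Omega_1\cup\Omega_2}\rho_i=\int_{\Omega_1}\rho_i+\int_{\Omega_2}\rho_i$; applying $\liminf$, using its superadditivity, and then infimizing over $(\rho_i)_i$ produces
\[
\|D_{AM}u\|(\Omega_1\cup\Omega_2)\ge\|D_{AM}u\|(\Omega_1)+\|D_{AM}u\|(\Omega_2).
\]

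The central step is finite subadditivity, where Lemma~\ref{lem:MartioLemma} enters. Given $\eps>0$ and open $\Omega_1,\Omega_2$, I would first choose AM-bounding sequences $(g_i^j)_i$ on $\Omega_j$ with $\liminf_i\int_{\Omega_j}g_i^j<\|D_{AM}u\|(\Omega_j)+\eps/2$ for $j=1,2$, and then pass to a common subsequence along which both $\int_{\Omega_1}g_i^1$ and $\int_{\Omega_2}g_i^2$ converge as genuine limits. Since any subsequence of an AM-bounding sequence is still AM-bounding (the admissibility condition is formulated in terms of $\liminf$), the sequences remain AM-bounding after this extraction. Lemma~\ref{lem:MartioLemma} then produces a stitched AM-bounding sequence $(g_i)_i$ for $u$ on $\Omega_1\cup\Omega_2$, and the pointwise bound $\max(g_i^1,g_i^2)\le g_i^1+g_i^2$ on the overlap gives
\[
\int_{\Omega_1\cup\Omega_2}g_i\le\int_{\Omega_1}g_i^1+\int_{\Omega_2}g_i^2.
\]
Taking $\liminf$ along the common subsequence and sending $\eps\to 0^+$ yields subadditivity. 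The main obstacle is precisely this synchronization of subsequences: in general $\liminf(a_i+b_i)$ may strictly exceed $\liminf a_i+\liminf b_i$, so stitching two generic near-optimal sequences index by index would not automatically transfer the near-optimal energy bounds, and extracting a common convergent subsequence is essential. With all four conditions in hand, DeGiorgi--Letta delivers the Borel outer measure extension of $\|D_{AM}u\|$ to $X$.
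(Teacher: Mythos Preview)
Your overall strategy matches the paper's exactly: the paper does not give a self-contained proof but refers to Martio~\cite[Theorem~4.1]{Martio}, who applies the DeGiorgi--Letta criterion~\cite[Theorem~5.1]{DL} using the stitching Lemma~\ref{lem:MartioLemma} for the subadditivity step, in direct analogy with Miranda's proof of~\cite[Theorem~3.4]{Miranda}. Your treatment of vanishing, monotonicity, superadditivity on disjoint pairs, and subadditivity via Lemma~\ref{lem:MartioLemma} (including the careful passage to a common subsequence to avoid the mismatch between $\liminf(a_i+b_i)$ and $\liminf a_i+\liminf b_i$) is correct.

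There is, however, one genuine gap. The DeGiorgi--Letta criterion requires \emph{five} conditions, not four: in addition to the four you list, one must verify \emph{inner regularity},
\[
\|D_{AM}u\|(U)=\sup\bigl\{\|D_{AM}u\|(V):V\text{ open},\ V\Subset U\bigr\}.
\]
Without this, the four remaining properties do not suffice to produce a Borel measure. The standard way to obtain inner regularity here is to exhaust $U$ by open sets $V_1\Subset V_2\Subset\cdots\Subset U$, write $U=V_{n+1}\cup(U\setminus\overline{V_{n-1}})$, and invoke the subadditivity you have already established (again via Lemma~\ref{lem:MartioLemma}) to get
\[
\|D_{AM}u\|(U)\le \|D_{AM}u\|(V_{n+1})+\|D_{AM}u\|(U\setminus\overline{V_{n-1}}).
\]
One then needs $\|D_{AM}u\|(U\setminus\overline{V_{n-1}})\to 0$, which follows by taking a single near-optimal AM-bounding sequence $(\rho_i)_i$ on $U$ (or a global one, using $u\in BV_{AM}(X:V)$), restricting it to the collar $U\setminus\overline{V_{n-1}}$, and using dominated convergence on $\int_{U\setminus\overline{V_{n-1}}}\rho_i\,d\mu$. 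This is precisely the step in Miranda's argument that carries over unchanged, and it should be included in your outline.
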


\subsection{Dealing with a non-compact proper $Y$}

We now consider the case that the metric space $(Y,d_Y)$ is a proper metric space that is not compact. Recall that the proofs
and discussions in Section~\ref{sec:JumpSets} dealt with the case that $Y$ is compact as then we can focus on covering
$Y$ by \emph{finitely many} balls of radius $\eps>0$ and hence find a ball whose pre-image has positive density at
a point $x\in \mathcal{J}(u)$. If we had instead a countably infinite many balls needed to cover $Y$, then we do not
know that there must be one ball whose pre-image has positive density at $x$. When $Y$ is not compact, this is because
$Y$ is not bounded; hence we cannot cover $Y$ by finitely many balls of fixed radius $\eps>0$. In this subsection we 
point out how to deal with this situation.

As in the proof of Proposition~\ref{prop:finitejumps}, let $Y_0$ be a countable dense subset of $Y$, and for each
$y\in Y_0$ let $\mathcal{R}_0(y)$ be a countable dense subset of $\mathcal{R}(y)$. 
If there is some $R>0$ and $a\in Y$ such that $\mu(u^{-1}(Y\setminus B(a,R)))=0$, then we can replace $Y$ with
$\overline{B}(a,R)$ and the proof of Proposition~\ref{prop:finitejumps} identifies the jump values of $u$ at
points in $\mathcal{J}(u)\setminus N$. Hence we may assume without loss of generality that no such $a$, $R$, exists.
In this case, we fix a point $a\in Y_0$ and note by the co-area formula
Lemma~\ref{lem:coarea} applied to the real-valued function $d_a\circ u$ 
of bounded variation given by $x\mapsto d_Y(a,u(x))$, that
\[
\int_0^\infty P(u^{-1}(B(a,t)), X)\, dt=\Vert D\, d_a\circ u\Vert(X)\le \Vert D_{AM}u\Vert(X)<\infty.
\]
It follows that for each positive integer $n$ we can find $R_n>n$ such that 
$P(u^{-1}(B(a,R_n)), X)<1/n$. We now enlarge the null set $N$, chosen in 
the proof of Proposition~\ref{prop:finitejumps}, by replacing $N$ with 
\[
N\cup\bigcup_{k\in\N}\partial_*u^{-1}(B(a,R_k))\setminus\Sigma_\gamma u^{-1}(B(a,R_k)).
\]
We now fix $x\in\mathcal{J}(u)\setminus N$. Then, with 
$x\in \mathcal{J}(u)\setminus N$ as in the proof of Proposition~\ref{prop:finitejumps}, we have one of two cases:
\begin{enumerate}
\item[(a)] For each positive integer $n$ we have that
\[
\limsup_{r\to 0^+}\frac{\mu(B(x,r)\cap u^{-1}(\overline{B}(a,R_n)))}{\mu(B(x,r))}=0.
\]
\item[(b)] There is some positive integer $n_0$ such that for each $n\ge n_0$ we have 
\[
\limsup_{r\to 0^+}\frac{\mu(B(x,r)\cap u^{-1}(\overline{B}(a,R_n)))}{\mu(B(x,r))}>0.
\]
\end{enumerate}
Should Case~(a) happen, we say that $u$ is approximately continuous at $x$ with approximate limit $\infty$.
Such points form a $\mu$-measure null subset of $X$ because, by embedding $Y$ into a Banach space
and using Bochner integrals, we know that $\mu$-a.e.~point in $X$ is a Lebesgue point of $u$ as
$u\in L^1(X:V)$; note that the value of the function at a Lebesgue point must necessarily be a point in the Banach space
and hence cannot be infinite in nature.
We can include them in the set of approximately continuous
points of $u$. Thus it suffices to take care of Case~(b). In this case, we focus on covering the compact set
$\overline{B}(a,R_n)$ for some fixed $n\ge n_0$ by balls $B(y_i,\eps)$, $i=1,\cdots, N_\eps$, where implicitly
$N_\eps$ now depends on the choice of $R_n$ as well, but as $n$ is fixed, this dependence is suppressed. 
Here we ensure that $0<\eps<R_n/10$.
In so doing, we find one point, say $y_1$, such that 
\[
\limsup_{r\to 0^+}\frac{\mu(B(x,r)\cap u^{-1}(B(y_1,\eps)))}{\mu(B(x,r))}>0.
\]
Thus we can choose $E_1=u^{-1}(B(y_1,\eps))$, and as $x$ is not a point of
approximate continuity of $u$, we also know that
\[
\limsup_{r\to 0^+}\frac{\mu(B(x,r)\setminus u^{-1}(B(y_1,\eps)))}{\mu(B(x,r))}>0.
\]
If we also have
\[
\limsup_{r\to 0^+}\frac{\mu(B(x,r)\setminus u^{-1}(B(a,R_{2n})))}{\mu(B(x,r))}>0,
\]
then necessarily $x\in \partial_*u^{-1}B(a,R_{2n})$ and so as $x\not\in N$, we must have
that
\[
\liminf_{r\to 0^+}\frac{\mu(B(x,r)\setminus u^{-1}(B(a,R_{2n})))}{\mu(B(x,r))}\ge \gamma.
\]
If for all positive integers $n$ the above density property holds for $u^{-1}(B(a,R_{2n}))$, then
we can consider $\infty$ to be one of the jump values of $u$ at $x$. Continuing the argument
found in the proof of Proposition~\ref{prop:finitejumps} by covering $\overline{B}(a,\tfrac{11}{10}R_n)$
by balls of radius $\eps/6^2$ to find $y_2$, and proceeding from there to find a sequence $y_i\in Y$
that converges to $y_\infty\in Y$, we see that $y_\infty$ must also be a jump value of $u$ at $x$.
The rest of the argument as found in the proof of Proposition~\ref{prop:finitejumps} holds, as long as 
we consider $\infty$ to be one of the jump values if necessary.

If $\infty$ is a jump value of $u$ at $x$, then we must necessarily have that 
$x\in\Sigma_\gamma u^{-1}(B(a,R_k))$ for each $k$. As 
\[
1/k>P(u^{-1}(B(a,R_k)), X)\approx\mathcal{H}^{-1}(\Sigma_\gamma u^{-1}(B(a,R_k))),
\]
we must have that 
\[
\mathcal{H}^{-1}(\bigcap_k \Sigma_\gamma u^{-1}(B(a,R_k)))=0.
\]
That is, the collection of all points $x\in\mathcal{J}(u)\setminus N$ that have $\infty$ as a jump value must
be of $\mathcal{H}^{-1}$-measure zero as well. All other points in $\mathcal{J}(u)$ can be handled by the 
proof of Proposition~\ref{prop:finitejumps} by using covering arguments only for the compact set $\overline{B}(a,R_j)$ for
sufficiently large $j$.

\noindent {\bf Address:} \\

\noindent I.C.: Departamento de An\'alisis Matem\'atico y Matem\'atica Aplicada, Facultad de Ciencias
Matem\'aticas, Universidad Complutense de Madrid, 28040 Madrid, Spain.\\
\noindent E-mail: {\tt ivancaam@ucm.es}\\

\vskip .2cm

\noindent J.K.: Department of Mathematical Sciences, P.O.~Box 210025, University of Cincinnati, Cincinnati, OH~45221-0025, U.S.A.
\\
\noindent E-mail: {\tt klinejp@mail.uc.edu}\\

\vskip .2cm

\noindent N.S.: Department of Mathematical Sciences, P.O.~Box 210025, University of Cincinnati, Cincinnati, OH~45221-0025, U.S.A.\\
\noindent E-mail:  N.S.: {\tt shanmun@uc.edu}\\

\end{document}